\newtheorem{theorem}{Theorem}[section]
\newtheorem{lemma}[theorem]{Lemma}
\theoremstyle{definition}
\newtheorem{definition}[theorem]{Definition}
\newtheorem{example}[theorem]{Example}
\newtheorem{claim}[theorem]{Claim}
\newtheorem{proposition}[theorem]{Proposition}
\newtheorem{corollary}[theorem]{Corollary}
\newtheorem{conjecture}[theorem]{Conjecture}
\newtheorem{convention}[theorem]{Convention}
\newtheorem{notation}[theorem]{Notation}
\newtheorem{remark}[theorem]{Remark}
\newtheorem{algorithm}[theorem]{Algorithm}
\numberwithin{equation}{section}
\newcommand{\ord}{\mbox{\rm ord}}
\newcommand{\id} {{\bf 1}}
\newcommand{\Ind}{{\mbox{\rm Ind}}}
\newcommand{\rank}{\mbox{\rm {rank}}}
\newcommand{\rad}{\mbox{\rm rad}}
\def\L{\hbox{\bf L}}
\def\I{\hbox{\bf I}}
\def\CX{\mathbb{C}}
\def\CQ{{\mathbb Q}}
\def\bG{{\mathbb G}}
\def\bX{{\mathbb X}}
\def\bY{{\mathbb Y}}
\def\diag{\hbox{\rm diag}}
\def\coeff{\hbox{\rm coeff}}
\def\GL{{\rm GL}}
\def\Sym{{\rm Sym}}
\def\stab{{\rm stab}}
\def\Zero{{\rm Zero}}
\def\Mat{{\rm Mat}}
\def\lc{{\rm lc}}
\def\ord{{\rm ord}}
\def\rI{{\rm I}}
\def\rW{{\rm W}}
\def\Spec{{\rm Spec}}
\def\max{{\rm max}}
\def\B{{\mathcal B}}
\def\G{{\mathcal G}}
\def\I{{\mathcal I}}
\def\P{{\mathcal P}}
\def\R{{\mathcal R}}
\def\L{{\mathcal L}}
\def\calU{{\mathcal U}}
\def\H{{\mathcal H}}
\def\M{{\mathcal M}}
\def\Z{\hbox{\bf Z}}
\def\bV{\mathbb{V}}
\def\bZ{\mathbb{Z}}
\def\frakX{{\mathfrak{X}}}
\def\bQ{{\bar{\mathbb{Q}}}}
\def\F{{\mathcal F}}
\def\calF{{\mathcal F}}
\def\calZ{{\mathcal Z}}
\def\calL{{\mathcal L}}
\def\calS{{\mathcal S}}
\def\bfalpha{{\bm\alpha}}
\def\bfxi{{\bm\xi}}
\def\bfmu{{\bm\mu}}
\def\bfa{{\mathbf a}}
\def\bfb{{\mathbf b}}
\def\bfc{{\mathbf c}}
\def\bfd{{\mathbf d}}
\def\bfe{{\mathbf e}}
\def\bff{{\mathbf f}}
\def\bfi{{\mathbf i}}
\def\bfj{{\mathbf j}}
\def\bfs{{\mathbf s}}
\def\bfv{{\mathbf v}}
\def\bfn{{\mathbf n}}
\def\bfh{{\mathbf h}}
\def\bfT{{\mathbf T}}
\def\bfY{{\mathbf Y}}
\newcommand{\bfeta}{{\bm\eta}}
\newcommand{\bfchi}{{\bm\chi}}
\def\frakm{{\mathfrak{m}}}
\def\frakh{{\mathfrak{h}}}
\def\frakp{{\mathfrak{p}}}
\begin{document}

\title{Difference Galois Groups under Specialization}

\author{Ruyong Feng}
\address{KLMM,Academy of Mathematics and Systems Science, and School of Mathematics, University of Chinese Academy of Sciences,
Chinese Academy of Sciences, No.55 Zhongguancun East Road, Beijing 100190, China}
\email{ryfeng@amss.ac.cn}
\thanks{The author was supported in part by NSFC Grants No.11771433 and No.11688101. The author would like to thank Michael F. Singer for many valuable conversations. Especially, he suggested considering the arguments in the proof of Theorem 4.4 of \cite{singer}. }

\subjclass[2000]{Primary 12H10; Secondary 13B05}

\date{}

\keywords{Linear difference equations, Difference Galois groups, Specializations}

\begin{abstract}
We present a difference analogue of a result given by Hrushovski on differential Galois groups under specialization. Let $k$ be an algebraically closed field of characteristic zero and $\bX$ an irreducible affine algebraic variety over $k$. Consider the linear difference equation
$$
   \sigma(Y)=AY
$$
where $A\in \GL_n(k(\bX)(x))$ and $\sigma$ is the shift operator $\sigma(x)=x+1$. Assume that the Galois group $G$ of the above equation over $\overline{k(\bX)}(x)$ is defined over $k(\bX)$ i.e. the vanishing ideal of $G$ is generated by a finite set $S\subset k(\bX)[X,1/\det(X)]$. For a $\bfc\in \bX$, denote by $v_{\bfc}$ the map from $k[\bX]$ to $k$ given by $v_{\bfc}(f)=f(\bfc)$ for any $f\in k[\bX]$. We prove that the set of $\bfc\in \bX$ satisfying that $v_\bfc(A)$ and $v_\bfc(S)$ are well-defined and the affine variety in $\GL_n(k)$ defined by $v_{\bfc}(S)$ is the Galois group of $\sigma(Y)=v_{\bfc}(A)Y$ over $k(x)$ is Zariski dense in $\bX$.

We apply our result to van der Put-Singer's conjecture which asserts that an algebraic subgroup $G$ of $\GL_n(k)$ is the Galois group of a linear difference equation over $k(x)$ if and only if the quotient $G/G^\circ$ by the identity component is cyclic. We show that if van der Put-Singer's conjecture is true for $k=\CX$ then it will be true for any algebraically closed field $k$ of characteristic zero.
\end{abstract}

\maketitle

\section{Introduction}
\label{SEC:introduction}
Let $K$ be a function field of one variable over $\CQ$ and $\L$ a linear differential operator with coefficients in the differential field $(K(t), {\rm d}/{\rm d}t)$. For a place $\frakp$ in $K$, $\Sigma_\frakp$ denotes its residue field, and $\L_\frakp$ denotes the differential operator over $\Sigma_\frakp(x)$ obtained by applying $\frakp$ to the coefficients of $\L$.
In \cite{hrushovski}, Hrushovski proved that for many places $\frakp$ in $K$, the Galois group of $\L(y)=0$ over $\bar{K}(t)$ specializes precisely to the Galois group of $\L_\frakp(y)=0$ over $\bar{\Sigma}_\frakp(t)$. As a corollary, he proved a function field analogue of Grothendieck-Katz's conjecture on $p$-curvatures. The reader is referred to \cite{katz} for this conjecture and to (\cite{divizio},\cite{pillay}) for its generalizations. In particular, Di Vizio in \cite{divizio} presented a positive answer of a $q$-analogue of Groethendieck-Katz's conjecture, i.e. an analogue statement for $q$-difference equations. The difference analogue of the Grothendieck-Katz's conjecture is not true  (see a counterexample on page 58 of \cite{vanderPut-Singer}). But one can still ask whether Hrushovski's result holds true for linear difference equations. The goal of this paper is to provide an affirmative answer of this question. Let us start with an example.
\begin{example}
\label{example1}
Let $\bX=\mathbb{A}^1(\CX)$ and denote $\CX(\bX)=\CX(t)$. Consider
$$
   \sigma(Y)=\diag(t, x, x+t)Y
$$
where $\sigma$ is the shift operator $\sigma(x)=x+1$. Denote $A(t)=\diag(t, x, x+t)$. Due to van der Put-Singer's method (see Section 2.2 of \cite{vanderPut-Singer}),  $\bG_m^3(\overline{\CX(t)})$ is the Galois group of the above equation over $\overline{\CX(t)}(x)$, where $\bG_m$ stands for the multiplicative group. Now let $c\in \mathbb{A}^1(\CX)\setminus\{0\}$. By van der Put-Singer's method again, one sees that the Galois group of $\sigma(Y)=v_c(A)Y$ over $\CX(x)$ equals $\bG_m^3(\CX)$ if and only if $c$ is neither a root of unity nor an integer. On the other hand, the vanishing ideal of $\bG_m^3(\CX)$ is generated by $S=\{X_{1,2}, X_{1,3}, X_{2,1}, X_{2,3}, X_{3,1}, X_{3,2}\}$. For any $c\in \CX$, the variety in $\GL_3(\CX)$ defined by $v_c(S)$ is $\bG_m^3(\CX)$.
\end{example}
This example implies that on the one hand there are infinitely many ``good" $c\in \mathbb{A}^1(\CX)$ such that the Galois group of $\sigma(Y)=v_c(A)Y$ over $\CX(x)$ is equal to $\bG_m^3(\CX)$, on the other hand these good $c$ do not form an open subset of $\mathbb{A}^1(\CX)$ in the sense of Zariski topology. Thus other algebraic structures rather than Zariski open sets are necessary to describe these good $c$. For this purpose, we introduce basic open subsets of the corresponding variety (see Definition~\ref{DEF:basicopensets}).

Throughout this paper, $k$ denotes an algebraically closed field of characteristic zero. Let $\bX$ be an irreducible affine algebraic variety over $k$. $k[\bX]$ (resp. $k(\bX)$) denotes the ring (resp. field) of regular (resp. rational) functions on $\bX$, and $\overline{k(\bX)}(x)$ stands for the field of rational functions in $x$ with coefficients in $\overline{k(\bX)}$, the algebraic closure of $k(\bX)$. Over $\overline{k(\bX)}(x)$, we can define a shift operator $\sigma$ as the following: $\sigma(x)=x+1$ and $\sigma(c)=c$ for all $c\in \overline{k(\bX)}$. Consider the linear difference equation
\begin{equation}
\label{EQ:differenceeqn}
 \sigma(Y)=AY
\end{equation}
where $Y$ is an $n$-vector of indeterminates and $A\in \GL_n(k(\bX)(x))$. Let $X=(X_{i,j})$ be an $n\times n$ matrix of indeterminates and $\overline{k(\bX)}(x)[X,1/\det(X)]$ denotes the ring over $\overline{k(\bX)}(x)$ generated by entries of $X$ and $1/\det(X)$. The main result of this paper is as follows.
\begin{theorem}
\label{TH:main} Suppose that $G$ is the Galois group of $\sigma(Y)=AY$ over $\overline{k(\bX)}(x)$ and the vanishing ideal of $G$ is generated by a finite set $S\subset k[\bX][X]$. Then there is a basic open subset $U$ of $\bX$ such that for any $\bfc\in U$, the variety in $\GL_n(k)$ defined by $v_{\bfc}(S)$ is the Galois group of $\sigma(Y)=v_{\bfc}(A)Y$ over $k(x)$.
\end{theorem}

We prove in Theorem~\ref{TH:basicopen} that every basic open subset of $\bX$ is Zariski dense in $\bX$.
Theorem~\ref{TH:main} together with Theorem~\ref{TH:basicopen} then gives a positive answer to the question posed at the beginning of this paper. Similar to the Hrushovski's treatment in \cite{hrushovski}, the proof of the above theorem relies on the computation of difference Galois groups and other algorithmic aspects of linear difference equations,  which are developed in \cite{feng2,petkovsek,vanderPut-Singer} etc. Our way to compute difference Galois groups is via the Picard-Vessiot theory.  Remark that there is another way so-called Tannakian category method to construct Galois groups. Based on this category approach, a similar result was obtained in \cite{braverman-etingof-gaitsgory} for differential Galois groups of quantum completely integrable systems.

Theorem~\ref{TH:main} can be applied to van der Put-Singer's conjecture concerning the inverse problem in difference Galois theory. Let $G$ be an algebraic subgroup of $\GL_n(k)$. Theorem~\ref{TH:main} allows one to conclude that if $G(\overline{k(\bX)})$ is the Galois group of a linear difference equation over $\overline{k(\bX)}(x)$ then $G$ is the Galois group of a linear difference equation with coefficients in $k(x)$. This enables us to reduce van der Put-Singer's conjecture to the case where the field of constants is the field of complex numbers. Note that in \cite{maier} the specialization technique is also applied to realize a semisimple, simply-connected linear algebraic group defined over $\mathbb{F}_q$ as a Galois group of a Frobenuis difference equation.

Recently, parameterized Galois theories were developed in \cite{cassidy-singer,divizio-hardouin-wibmer,hardouin-singer, ovchinnikov-wibmer} etc for linear difference (or differential) equations with parameters admitting actions of the derivations or endomorphisms. These parameterized Galois theories provide a powerful tool to measure the differential (or difference) dependencies among solutions of the corresponding equations and have found many applications in combinatorics and the theory of special functions. However, the present paper focuses on linear difference equations with parameters where the derivations or endomorphisms act trivially. These equations can be regarded as a family of linear difference equations parameterized by an irreducible affine variety, and the Galois groups then measure the algebraic relations among solutions at generic points. The main result of this paper tells us for what specializations of the parameters these algebraic relations among solutions are preserved precisely. From Example~\ref{example4}, one may see that there are specializations which destroy the algebraic relations completely.

The  rest of this paper is organized as follows. In Section 2, we introduce the notion of basic open subsets of an irreducible affine variety $\bX$ over $k$ and present the properties of these subsets. Sections 3 and 4 present some preliminary results for the proof of Theorem~\ref{TH:main}. In Section 3, we deal with algebraic groups defined over $k(\bX)$. Precisely, we prove that for almost all $\bfc\in \bX$, $v_\bfc$ preserves the structure of algebraic groups and is bijective from the characters of a connected algebraic group $G$ to those of $G_\bfc$, the specialized group of $G$. In Section 4, we consider $\sigma$-ideals.  We show that given a $\nu$-maximal $\sigma$-ideal of $\overline{k(\bX)}(x)[X,1/\det(X)]$ (see Definition~\ref{DEF:nu-maximalspace}) generated by a finite set $S\subset k[\bX][X]$, there is a basic open subset $U$ of $\bX$ such that $v_\bfc(S)$ generates a $\nu$-maximal $\sigma$-ideal of $k(x)[X,1/\det(X)]$ for all $\bfc\in U$. We prove Theorem~\ref{TH:main} in Section 5 and apply this theorem to the inverse problem in difference Galois theory in Section 6.

{\bf Notations}: When $P$ is an element in $k[\bX][X,1/\det(X)]$ or a matrix with entries in $k[\bX]$, we also use $P(\bfc)$ to denote $v_\bfc(P)$. All varieties in this paper will be affine.
\begin{longtable}{ll}
$k, L$ & algebraically closed fields of characteristic zero\\
$\bG_a$ (resp. $\bG_m$) & additive (resp. multiplicative) group\\
$\bX, \bY$    &affine algebraic varieties over $k$\\
$k[\bX]$ & the ring of regular functions on $\bX$\\
$k(\bX)$ & the field of rational functions on $\bX$\\
$p_{\bY/\bX}$ & the projection from $\bY$ to $\bX$ induced by $k[\bX]\subset k[\bY]$\\
$v_\bfc$ & the map from $k[\bX]$ to $k$ given by $v_{\bfc}(f)=f(\bfc)$\\
$\Gamma, \tilde{\Gamma}$ &  finitely generated subgroups of $\bG_a(\overline{k(\bX)})$  or $\bG_m(\overline{k(\bX)})$\\
$\tilde{U},U,U_1,U_2,\cdots$ & basic open subsets of $\bX$ or $\bY$\\
$\bX_f$ & $\{\bfc\in \bX |f(\bfc)\neq 0\}$, where $f\neq 0$\\
$G$         & an algebraic subgroup of $\GL_n(\overline{k(\bX)})$ (or $\GL_n(k)$)\\
$G^\circ$   & the identity component of $G$\\
$\bfchi(G)$      & the group of characters of $G$\\
$G(\overline{k(\bX)}(x))$  & the set of $\overline{k(\bX)}(x)$-points of $G$ \\
$\Z(f)$     & the set of integer zeroes of $f$\\
$X$ (resp. $Z$)        & $n\times n$ matrix with indeterminate entries $X_{i,j}$ (resp. $Z_{i,j}$)\\
$L[X]_{\leq d}$ & the set of polynomials in $L[X]$ with total degree $\leq d$
\end{longtable}

\section{Basic open subsets of $\bX$}
\label{SEC:basicopensets}
 In this section, we shall introduce an algebraic structure of $\bX$ which is Zariski dense in $\bX$ and consists of good specializations. Throughout this section, we fix an algebraic closed field $L$ containing $k(\bX)$ and all $k$-algebras will be in $L$. Assume that $\Gamma$ is a finitely generated subgroup of $\bG_a(L)$ or $\bG_m(L)$. Denote by $\bY$ the variety over $k$ associated to $k[\bX][\Gamma]$, the $k[\bX]$-algebra in $L$ generated by $\Gamma$ and denote by $p_{\bY/\bX}$ the morphism from $\bY$ to $\bX$ induced by the inclusion $k[\bX]\subset k[\bX][\Gamma]$. Note that $k[\bY]=k[\bX][\Gamma]$ and $\bY$ can be identified with the set of all $k$-homomorphisms from $k[\bX][\Gamma]$ to $k$. Under this identification, for $\bfc\in \bY$, we use $v_\bfc$ to denote the $k$-homomorphism corresponding to $\bfc$. One sees that for $f\in k[\bX][\Gamma]$, $v_\bfc(f)$ is equal to the value at $\bfc$ of $f$ viewed as a regular function on $\bY$, i.e. $v_\bfc(f)=f(\bfc)$. We are interested in those $\bfc\in \bY$ whose induced maps $v_\bfc$ are injective on $\Gamma$. Set
\begin{equation}
\label{eqn:basicopensets}
    \B(\bX,\Gamma)=p_{\bY/\bX}\left(\left\{\bfc\in \bY |
                                     \mbox{$v_\bfc$ is injective on $\Gamma$}\right\}\right).
\end{equation}
\begin{definition}
\label{DEF:basicopensets}
A basic open subset of $\bX$ is defined to be the intersection of finitely many subsets of $\bX$ of the form $\B(\bX,\Gamma)$. When $\Gamma$ is the subgroup of $\bG_a(L)$ generated by a single $g\in L$, we will abbreviate $\B(\bX,\Gamma)$ to $\bX_g$.
\end{definition}
Remark that when $\Gamma \subset k[\bX]$, one can take $\bY=\bX$ and then $p_{\bY/\bX}$ is the identity map and $\B(\bX,\Gamma)=\left\{\bfc\in \bX |\mbox{$v_\bfc$ is injective on $\Gamma$}\right\}$. The reason that $\Gamma$ is not restricted to $k[\bX]$ is as follows: On the one hand, the extension of $k[\bX]$ is necessary in some cases such as the defining field of characters of $G^\circ$ (see Example~\ref{example3}); On the other hand, if we restrict $\Gamma$ to $k[\bX]$ in Definition~\ref{DEF:basicopensets} then we do not know whether basic open sets are preserved by the projection map, although they do if they are only defined by additive groups (see Lemma 5A.1 of \cite{hrushovski}).
Two lemmas below imply that basic open sets without the above restriction are preserved by the projection map in some sense. The first one is due to Proposition 9 on page 34 of \cite{kolchin}.
\begin{lemma}
\label{LM:kolchinlemma}
Assume that $\bY$ is a variety over $k$ associated to a finitely generated $k[\bX]$-algebra in $L$.
For any $\tilde{f}\in k[\bY]\setminus\{0\}$, there is a nonzero $f\in k[\bX]$ such that
$$
   \bX_f \subset p_{\bY/\bX}(\bY_{\tilde{f}}).
$$
\end{lemma}
\begin{lemma}
\label{LM:property}
Suppose that $\bY$ is as in Lemma~\ref{LM:kolchinlemma} and $U$ is a basic open subset of $\bY$. Then $p_{\bY/\bX}(U)$
contains a basic open subset of $\bX$.
\end{lemma}
\begin{proof}
It suffices to show the assertion with $U=\B(\bY,\Gamma)$, where $\Gamma$ is a finitely generated subgroup of $\bG_a(L)$ or $\bG_m(L)$. Assume that $k[\bY]$ is generated by a finite subset $T$ of $L\setminus \{0\}$ as a $k[\bX]$-algebra. Let $\tilde{\Gamma}$ be generated by $\Gamma\cup T$ as a group of the same type as $\Gamma$. Then $k[\bY][\Gamma]\subset k[\bX][\tilde{\Gamma}]=k[\bY][\tilde{\Gamma}]$. Let $\tilde{\bY}$ and $\bY'$ be the varieties over $k$ associated to $k[\bX][\tilde{\Gamma}]$ and $k[\bY][\Gamma]$ respectively. Since $\tilde{\Gamma}\subset k[\tilde{\bY}]$, $\B(\tilde{\bY},\tilde{\Gamma})=\{\bfc\in \tilde{\bY}|\mbox{$v_\bfc$ is injective on $\tilde{\Gamma}$}\}$. Then by definition, one has that
\begin{equation}
\label{EQ:equality1}
\B(\bX,\tilde{\Gamma})=p_{\tilde{\bY}/\bX}(\{\bfc\in \tilde{\bY}|\mbox{$v_\bfc$ is injective on $\tilde{\Gamma}$}\})=p_{\tilde{\bY}/\bX}(\B(\tilde{\bY},\tilde{\Gamma})).
\end{equation}
Similarly, $U=\B(\bY,\Gamma)=p_{\bY'/\bY}(\B(\bY',\Gamma))$.
Furthermore, as the morphism $p_{\tilde{\bY}/\bY'}$ is induced by the inclusion $k[\bY][\Gamma]\subset k[\tilde{\bY}]$, for any $\bfc\in \tilde{\bY}$ and any $f\in \Gamma$, $v_{p_{\tilde{\bY}/\bY'}(\bfc)}(f)=v_\bfc(f)$. This implies that if $v_\bfc$ is injective on $\Gamma$ then so is $v_{p_{\tilde{\bY}/\bY'}(\bfc)}$. Hence $p_{\tilde{\bY}/\bY'}(\B(\tilde{\bY},\Gamma))\subset \B(\bY',\Gamma)$ and then
\begin{equation}
\label{EQ:subset}
 p_{\tilde{\bY}/\bX}(\B(\tilde{\bY},\Gamma))=p_{\bY/\bX}(p_{\bY'/\bY}(p_{\tilde{\bY}/\bY'}(\B(\tilde{\bY},\Gamma))))\subset p_{\bY/\bX}(p_{\bY'/\bY}(\B(\bY',\Gamma)))=p_{\bY/\bX}(U).
 \end{equation}
Finally as $\B(\tilde{\bY},\tilde{\Gamma})\subset \B(\tilde{\bY},\Gamma)$, the formulas (\ref{EQ:equality1}) and (\ref{EQ:subset}) yield that
$$\B(\bX,\tilde{\Gamma})=p_{\tilde{\bY}/\bX}(\B(\tilde{\bY},\tilde{\Gamma}))\subset p_{\tilde{\bY}/\bX}(\B(\tilde{\bY},\Gamma))\subset p_{\bY/\bX}(U).$$
\end{proof}

\begin{remark}
\label{RM:basicopensubsets}
We should remark that the set $\B(\bX,\Gamma)$ given in Definition~\ref{DEF:basicopensets} is nothing else but a subset of a basic gr-open subset of $\Spec(k[\bX])$ introduced by Hrushovski in \cite{hrushovski}. Let $G$ be a commutative algebraic group scheme over $k[\bX]$ and $\Gamma$ a finitely generated subgroup of $G(k[\bX])$. The set of primes $\frakp\in \Spec(k[\bX])$ satisfying that the canonical map $k[\bX]\rightarrow k[\bX]/\frakp$ is injective on $\Gamma$ is called a basic gr-open subset of $\Spec(k[\bX])$, denoted by $\rW(G,\Gamma)$. When $G=\bG_a$ or $G=\bG_m$, one has that
$$\B(\bX,\Gamma)=\rW(G,\Gamma)\cap \max(k[\bX])$$
where $\max(k[\bX])$ denotes the set of maximal ideals of $k[\bX]$. Hrushovski proved that if $k$ is a number field and $\dim \bX=1$ then $\rW(G,\Gamma)$ is infinite (see Lemma 5A.10 of \cite{hrushovski}). The key idea of his proof is reducing $G$ to the cases that $G$ is an Abelian variety or $\bG_m$ or $\bG_a$. The case that $G$ is an Abelian variety is due to N\'{e}ron (see for example Section 6 in Chapter 9 of \cite{lang} or Section 11.1 of \cite{serre}). The case when $G=\bG_a$ was proved in Lemma 5A.4 of \cite{hrushovski}. For the case when $G=\bG_m$, Hrushovski claimed that one can use an entirely similar argument as that in the proof of N\'{e}ron's Theorem. A similar claim was also made by Serre in Section 11.1 of \cite{serre} for the case when $k$ is a number field and $k(\bX)$ is a purely transcendental extension of $k$. To be complete, we shall provide a detailed proof for the case when $G=\bG_m$. Moreover we remove the restrictions on $k$ and $k[\bX]$.
\end{remark}

Now we turn to showing that basic open subsets of $\bX$ are not empty. We first  show that $\B(\bX,\Gamma)$ is not empty. From (\ref{eqn:basicopensets}), it suffices to prove that the set $\{\bfc\in \bY|\mbox{$v_\bfc$ is injective on $\Gamma$}\}$ is not empty. Furthermore since $\Gamma\subset k[\bY]$, one sees that $\B(\bY,\Gamma)=\{\bfc\in \bY|\mbox{$v_\bfc$ is injective on $\Gamma$}\}$. So it suffices to prove that $\B(\bY,\Gamma)\neq \emptyset$. Due to Noetherian normalization lemma, it is reasonable to make the following assumption.
\begin{convention}
Suppose that $\bY\subset k^m$ and denote
$k[\bY]=k[\eta_1,\cdots,\eta_m]$, where $\eta_1,\cdots,\eta_l\in L$ are algebraically independent over $k$ and $\eta_{l+i}\in L$ is integral over $k[\eta_1,\cdots,\eta_l]$. Set $\bfeta=(\eta_1,\cdots,\eta_m)$ and $ \bfeta_l=(\eta_1,\cdots,\eta_l).$
\end{convention}
To prove $\B(\bY,\Gamma)\neq \emptyset$, we need a generalization of Hilbert sets (see Section 12.1 of \cite{fried-jarden}).
Let $\tilde{k}\subset k$ be a field finitely generated over $\CQ$ such that the minimal polynomial of $\eta_{l+i}$ over $k(\bfeta_l)$ has coefficients in $\tilde{k}[\bfeta_l]$ for all $i=1,\cdots,m-l$.
Assume that $\bff$ is a finite set of polynomials in $\tilde{k}[\bfeta,z]$ irreducible over $\tilde{k}(\bfeta)$ and monic in $z$. Suppose that $g \in \tilde{k}[\bfeta]\setminus\{0\}$, and $\bfd=(d_1,\cdots,d_l)\in \bZ^l$ with positive $d_i$.
\begin{notation}
\label{DEF:hilbertian}
 $\H_{\tilde{k},\bY}(\bfd,\bff,g)$ denotes the set of $\bfc=(c_1,\cdots,c_m) \in \bY$ satisfying that
\begin{itemize}
\item [$(1)$]
    for $1\leq i \leq l$, $[\tilde{k}(c_1,\cdots,c_i):\tilde{k}(c_1,\cdots,c_{i-1})]\geq d_i$, and
\item [$(2)$]
    $g(\bfc)\neq 0$, and
\item [$(3)$]
   for each $f\in \bff$, $f(\bfc,z)$ is irreducible over $\tilde{k}(\bfc)$.
\end{itemize}
We call such $\H_{\tilde{k},\bY}(\bfd,\bff,g)$ a $\tilde{k}$-Hilbert set of $\bY$.
\end{notation}

Assume that $K$ is a field of characteristic zero, $\bfT=\{T_1,\cdots,T_m\}$ and $\bfY=\{y_1,\cdots,y_n\}$. For $g\in K[\bfT]\setminus \{0\}$ and $h_1,\dots,h_s\in K(\bfT)[\bfY]$ irreducible over $K(\bfT)$, denote by $H_K(h_1,\dots,h_s;g)$ the set of all $\bfc\in K^m$ with $g(\bfc)\neq 0$ and $h_1(\bfc,\bfY), \cdots, h_s(\bfc,\bfY)$ defined and irreducible in $K[\bfY]$.  In Section 12.1 of \cite{fried-jarden}, a set of the form $H_K(h_1,\dots,h_s;g)$ is called a Hilbert subset of $K^m$ and the field $K$ is called a hilbertian field if for every positive integer $m$, each Hilbert subset of $K^m$ is nonempty. One sees that $\H_{\tilde{k},\bY}((1,\cdots,1),\emptyset,g)=\bY_g$ and if $\bY=\tilde{k}^l$ then $\H_{\tilde{k},\bY}((1,\cdots,1),\bff,g)$ is a usual Hilbert set. Furthermore, one can easily verify that
$$\H_{\tilde{k},\bY}(\bfd_1,\bff_1,g_1)\cap \H_{\tilde{k},\bY}(\bfd_2,\bff_2,g_2)=\H_{\tilde{k},\bY}(\bar{\bfd},\bff_1\cup\bff_2,g_1g_2)$$
 where the $i$-th coordinate of $\bar{\bfd}$ is equal to the maximum of the $i$-th coordinates of $\bfd_1$ and $\bfd_2$ for all $i=1,\cdots,l$. From this, one sees that the intersection of finitely many $\tilde{k}$-Hilbert sets is a $\tilde{k}$-Hilbert set. Remark that if $\tilde{k}$ is replaced by an Omega-free PAC field $K$, Lemma 27.2.1 on page 660 of \cite{fried-jarden} implies that $\H_{K,\bY}((1,\cdots,1),\bff,g)$ is not empty.  We shall prove that every $\tilde{k}$-Hilbert set is nonempty.
\begin{lemma}
\label{LM:algextension}
Assume that $K$ is a hilbertian field and $\tilde{K}$ is a finite extension of $K$. For any positive integer $d$, there is $\alpha$ algebraic over $K$ satisfying that $[K(\alpha):K]=d$ and $K(\alpha)\cap \tilde{K}=K$.
\end{lemma}
\begin{proof}
Consider the polynomial $z^d-t \in K[z,t]$ which is irreducible over $\tilde{K}(t)$. Since $K$ is hilbertian, there is $c\in K$ such that $z^d-c$ is irreducible in $\tilde{K}[z]$ by Corollary 1.8 on page 10 of \cite{volklein}. Let $\alpha$ be a root of $z^d-c=0$ in $\bar{K}$. Then $[K(\alpha):K]=d$. If $K(\alpha)\cap \tilde{K}\neq K$ then $[K(\alpha):K(\alpha)\cap \tilde{K}]<d$. This implies that $z^d-c$ is reducible over $\tilde{K}$, a contradiction. Hence $K(\alpha)\cap \tilde{K}=K$.
\end{proof}

\begin{proposition}
\label{PROP:hilbertianset}
$\H_{\tilde{k},\bY}(\bfd,\bff,g)\neq \emptyset$.
\end{proposition}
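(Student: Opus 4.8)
The plan is to induct on $m = \trdeg(F/k)$ and peel off the variables $z_1, \dots, z_m$ one at a time, using the Hilbertian hypothesis at each stage and then clearing the integral element $\eta$ at the very end. First I would set up the base case: when $m = 0$ the hypothesis (H) forces $D$ to be a finite extension field of $k$, every $\varphi \in \Homk$ is injective, conditions (2) and (3) are vacuous, and (1) is the statement that a principal Zariski open set of a finite set of points is nonempty — so $\H_D(d,\bff,g) = \Homk \neq \emptyset$ trivially. For the inductive step, the idea is to regard $D = k[\bfz, 1/\delta(\bfz), \eta]$ as built over $D' = k[z_1, 1/\delta'(z_1), \eta']$ in the single variable $z_1$ (after renaming), or more symmetrically to fix a specialization of $z_1$ first and then recurse on $z_2, \dots, z_m$ over the new base field $k(\varphi(z_1))$.

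The key step is the choice of $\varphi(z_1)$. I would use Lemma~\ref{LM:intersection}: since $k$ is Hilbertian and $d$ is a prescribed positive integer, there is $\alpha \in \bar k$ with $[k(\alpha):k] = d$, and moreover $[\tilde k(\alpha) : \tilde k] = d$ for a given finite extension $\tilde k/k$; applying this with $\tilde k$ chosen to absorb the (finitely many) algebraic quantities that will appear, I can arrange $[k(\varphi(z_1)):k] = d$ while keeping $\varphi(z_1)$ generic enough that $\delta$ and $g$ do not vanish on it and the irreducibility of each $f \in \bff$ (viewed in $y$ over $k(z_1, z_2, \dots, z_m)$) is preserved after the substitution. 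More precisely, I would invoke the fact that the Hilbert irreducibility property is inherited: substituting a value for $z_1$ that lies in a suitable Hilbert subset of $k^1$ (which is nonempty since $k$ is Hilbertian, and by Lemma~\ref{LM:zariskidense} is even Zariski dense, so can avoid the zero locus of $g$ and $\delta$) keeps all the $f \in \bff$ — now polynomials in $k(\varphi(z_1))[z_2,\dots,z_m,y]$ — irreducible. The residue field $k_1 := k(\varphi(z_1))$ is again Hilbertian (a finite extension of a Hilbertian field is Hilbertian), $\{z_2,\dots,z_m\}$ is a transcendence basis of the relevant field over $k_1$, and $\eta$ remains integral over $k_1[z_2,\dots,z_m, 1/\delta]$; so the inductive hypothesis applies to the ring $D_1 := k_1[z_2,\dots,z_m, 1/\delta, \eta]$, yielding $\psi \in \H_{D_1}(d, \bff|_{z_1 = \varphi(z_1)}, g|_{z_1=\varphi(z_1)})$. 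Composing $\psi$ with the specialization $z_1 \mapsto \varphi(z_1)$ and checking that conditions (1)–(3) multiply up correctly across the tower gives the desired element of $\H_D(d,\bff,g)$.

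Finally I must handle $\eta$: after specializing $\bfz$, the element $\eta$ is integral over the residue ring $k(\varphi(\bfz))[\dots]$, hence $\varphi$ extends to $\eta$ precisely because $\bar k$ is algebraically closed and the minimal polynomial of $\eta$ (specialized) has a root there — so there is no obstruction to extending, though there may be several extensions; any one of them works for membership in $\H_D$, which only constrains the images of the $z_i$ and the behavior of $g$ and $\bff$. I expect the main obstacle to be the bookkeeping in the inductive step: ensuring that a single choice of $\varphi(z_1)$ simultaneously (a) achieves the exact degree $d$ rather than merely a divisor of $d$ — this is exactly what Lemma~\ref{LM:intersection} is engineered to deliver, by forcing $[k_{i-1}(\varphi(z_i)) : k_{i-1}] = d$ at each stage so that $[k(\varphi(z_1),\dots,\varphi(z_i)):k] = d^i$ multiplicatively — and (b) preserves irreducibility of every $f \in \bff$ in the remaining variables. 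Point (b) requires knowing that the locus of "bad" specializations of $z_1$ (those destroying irreducibility of some $f$, or making $g$ or $\delta$ vanish) is contained in the complement of a Hilbert set of $k^1$, which follows from the standard behaviour of Hilbert sets under specialization of one coordinate; marrying this with the degree constraint from Lemma~\ref{LM:intersection} is the technical heart, but it is routine once the tower is set up. Everything else is a matter of composing homomorphisms and tracking field degrees.
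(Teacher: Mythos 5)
There is a genuine gap at what you yourself call the ``technical heart,'' and it is not routine. Your plan is to choose $\varphi(z_1)$ so that simultaneously $[k(\varphi(z_1)):k]=d$ \emph{and} $\varphi(z_1)$ ``lies in a suitable Hilbert subset of $k^1$'' so that irreducibility of the $f\in\bff$ is preserved. These two requirements contradict each other for $d>1$: a Hilbert subset of $k^1$ is by definition a subset of $k$, so any element of it has degree $1$ over $k$. Hilbert's irreducibility property only controls specializations of the variables to values \emph{in the base field}; it says nothing about substituting an algebraic value of degree $d$. So the step ``substitute a value of degree $d$ that also preserves irreducibility'' has no justification as stated, and the same obstruction recurs at every stage of your induction.

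The paper's proof resolves exactly this tension by a translation trick that your sketch does not contain. One first fixes $\bfa=(a_1,\dots,a_m)$ with $[k(a_1,\dots,a_i):k(a_1,\dots,a_{i-1})]=d$ (via Lemma~\ref{LM:intersection}), and then specializes $\bfz\mapsto\bar{\bfc}+\bfa$ with $\bar{\bfc}\in k^m$; since $k(\bar{\bfc}+\bfa)=k(\bfa)$, the degree condition $(2)$ is automatic for \emph{every} choice of $\bar{\bfc}\in k^m$, and the irreducibility condition $(3)$ becomes a statement about the Hilbert subset of $k^m$ attached to the translated polynomials $f(\bfz+\bfa,y)$ over the base field $k(\bfa)$ --- now a legitimate application of Hilbertianity (Fried--Jarden, Lemma 11.6, plus the fact that a finite extension of a hilbertian field is hilbertian and Hilbert sets are Zariski dense, so one can also avoid the zero loci of $\delta$, $g$ and the leading coefficients). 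Even then a further argument is needed that you also elide: to invoke the Hilbert property over $k(\bfa)$ one must know each $f\in\bff$ is still irreducible in $k(\bfa)[\bfz,y]$, not merely in $k[\bfz,y]$. This is why Lemma~\ref{LM:intersection} is engineered to give the \emph{double} condition $[k(\alpha):k]=[\tilde{k}(\alpha):\tilde{k}]=d$: choosing $\tilde{k}$ so that the irreducible factors of each $f$ over $\tilde{k}$ are absolutely irreducible, one deduces $\tilde{k}\cap k(\bfa)=k$, and any nontrivial factor over $k(\bfa)$ would have coefficient ratios in $\tilde{k}\cap k(\bfa)\setminus k$, a contradiction. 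Your phrase about $\tilde{k}$ ``absorbing the algebraic quantities'' points in this direction but supplies no mechanism. Without the translation $\bar{\bfc}+\bfa$ and the $\tilde{k}\cap k(\bfa)=k$ argument (or substitutes for them), the proposal does not go through. The treatment of $\eta$ at the end is fine.
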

\begin{proof}
Suppose that $\bff=\{f_1,\cdots,f_s\}$. For each $i=1,\cdots,s$, let $\alpha_i\in L$ satisfy that $f_i(\alpha_i)=0$ and let $\beta_i\in L$ be such that $\tilde{k}(\bfeta,\alpha_i)=\tilde{k}(\bfeta_l,\beta_i)$. We may choose $\beta_i$ to be integral over $\tilde{k}[\bfeta_l]$. Let $\tilde{f}_i$ be the polynomial in $\tilde{k}[y_1,\cdots,y_l,z]$ irreducible over $\tilde{k}$ and monic in $z$ such that $\tilde{f}_i(\bfeta_l,\beta_i)=0$. Then
\begin{align*}
\deg_z(\tilde{f}_i)&=[\tilde{k}(\bfeta_l, \beta_i):\tilde{k}(\bfeta_l)]=[\tilde{k}(\bfeta_l, \beta_i):\tilde{k}(\bfeta)][\tilde{k}(\bfeta):\tilde{k}(\bfeta_l)]\\
&=[\tilde{k}(\bfeta, \alpha_i):\tilde{k}(\bfeta)][\tilde{k}(\bfeta):\tilde{k}(\bfeta_l)]=\deg_z(f_i)[\tilde{k}(\bfeta):\tilde{k}(\bfeta_l)].
\end{align*}
Assume that $\beta_i=h_i(\bfeta, \alpha_i)/r(\bfeta_l)$ where $h_i\in \tilde{k}[y_1,\cdots,y_m,z]$ and $r\in \tilde{k}[y_1,\cdots,y_l]$. Let $k'$ be a finite extension of $\tilde{k}$ such that all factors of the $\tilde{f}_i$ irreducible over $k'$ are absolutely irreducible. Using Lemma~\ref{LM:algextension} repeatedly, we have $c_1,\cdots,c_l\in k$ such that $\tilde{k}(c_1,\cdots,c_l)\cap k'=\tilde{k}$ and for each $j=1,\cdots,l$,
$$
   [\tilde{k}(c_1,\cdots,c_j):\tilde{k}(c_1,\cdots,c_{j-1})]=d_j.
$$
 Write $\bfc_l=(c_1,\cdots,c_l)$. We claim that all $\tilde{f}_i$ are irreducible over $\tilde{k}(\bfc_l)$. Otherwise, assume that $\tilde{f}_i$ is reducible over $\tilde{k}(\bfc_l)$ for some $i$ and $q$ is one of its irreducible factor. Then $q$ is the product of some irreducible factors of $\tilde{f}_i$ in $k'[y_1,\cdots,y_l,z]$. Therefore the coefficients of $q$ are all in $k'\cap \tilde{k}(\bfc_l)$, i.e. $q\in \tilde{k}[y_1,\cdots,y_l,z]$. This contradicts the irreducibility of $\tilde{f}_i$. This proves our claim. It is easy to see that all $\tilde{f}_i(y_1+c_1,\cdots,y_l+c_l,z)$ are also irreducible over $\tilde{k}(\bfc_l)$. As $\tilde{k}(\bfc_l)$ is a finite extension of the hilbertian field $\tilde{k}$, by Lemma 12.2.2 on page 224 of \cite{fried-jarden}, there is a Hilbert set $H\subset \tilde{k}^l$ such that for each $\bfa\in H$, all $\tilde{f}_i(\bfa+\bfc_l,z)$ are irreducible over $\tilde{k}(\bfc_l)$. Let $\tilde{g}$ be the norm of $g$ down to $\tilde{k}(\bfeta_l)$. Since $g$ is integral over $\tilde{k}[\bfeta_l]$ and $\tilde{k}[\bfeta_l]$ is integrally closed, $\tilde{g}\in \tilde{k}[\bfeta_l]$. One sees that for any $\bfc\in \bY$ if $\tilde{g}(\bfc)\neq 0$ then $g(\bfc)\neq 0$. Let $\tilde{H}$ be the set of $\bfa\in H$ satisfying that $\tilde{g}(\bfa+\bfc_l)r(\bfa+\bfc_l)\neq 0$. Then $\tilde{H}\neq \emptyset$ as $H$ is Zariski dense. Now let $\bfb=(b_1,\cdots,b_m)\in \bY$ satisfy that $(b_1,\cdots,b_l)=\bfa+\bfc_l$ for some $\bfa\in \tilde{H}$. Such $\bfb$ exists because $\eta_{l+1},\cdots,\eta_m$ are integral over $k[\bfeta_l]$. Let $\bar{\alpha}_i\in k$ be a zero of $f_i(\bfb,z)$. Set $\bar{\beta}_i=h_i(\bfb,\bar{\alpha}_i)/r(\bfa+\bfc_l)$. Then $\bar{\beta}_i$ is a zero of $\tilde{f}_i(\bfa+\bfc_l,z)$ and since $\tilde{f}_i(\bfa+\bfc_l,z)$ is irreducible over $\tilde{k}(\bfc_l)$,
\begin{align*}
   \deg_z(\tilde{f}_i(\bfa+\bfc_l,z))&=[\tilde{k}(\bfa+\bfc_l,\bar{\beta}_i):\tilde{k}(\bfa+\bfc_l)]=[\tilde{k}(\bfc_l,\bar{\beta}_i):\tilde{k}(\bfc_l)]\\
   &\leq [\tilde{k}(\bfb,\bar{\alpha}_i):\tilde{k}(\bfc_l)]=[\tilde{k}(\bfb,\bar{\alpha}_i):\tilde{k}(\bfb)][\tilde{k}(\bfb):\tilde{k}(\bfc_l)]\\
   &\leq \deg_z(f_i(\bfb,z))[\tilde{k}(\bfeta):\tilde{k}(\bfeta_l)]=\deg_z(\tilde{f}_i).
\end{align*}
The last inequality holds because $\bar{\alpha}_i$ is a zero of $f_i(\bfb,z)$ and $\bfb\in \bY$ with $\bfeta$ as a generic point. At the same time, because $\deg_z(\tilde{f}_i(\bfa+\bfc_l,z))=\deg_z(\tilde{f}_i)$, one has that $\deg_z(f_i(\bfb,z))=[\tilde{k}(\bfb,\bar{\alpha}_i):\tilde{k}(\bfb)]$. This implies that $f_i(\bfb,z)$ is irreducible over $\tilde{k}(\bfb)$. It is obvious that for each $j=1,\cdots,l$,
$$
    [\tilde{k}(b_1,\cdots,b_j):\tilde{k}(b_1,\cdots,b_{j-1})]= [\tilde{k}(c_1,\cdots,c_j):\tilde{k}(c_1,\cdots,c_{j-1})]\geq d_j.
$$
Therefore $\bfb\in \H_{\tilde{k},\bY}(\bfd,\bff,g)$.
\end{proof}
\begin{corollary}
\label{COR:rationalsols}
Suppose that $h\in \tilde{k}[\bfeta][z]$ is monic  and of degree $\geq 1$ in $z$. Then there exists a $\tilde{k}$-Hilbert set $V$ of $\bY$ such that for any $\bfc\in V$, $h(\bfc,z)=0$ has a root in $\tilde{k}(\bfc)$ if and only if $h=0$ has a root in $\tilde{k}(\bfeta)$.
\end{corollary}
\begin{proof}
Decompose $h$ into irreducible polynomials in $\tilde{k}(\bfeta)[z]$, say $h_1,h_2,\cdots,h_s$. Pick a suitable nonzero $g\in \tilde{k}[\bfeta]$ such that for each $i=1,\cdots,s$, $g^{\deg_z(h_i)}h_i=f_i(gz)$ for some $f_i\in \tilde{k}[\bfeta,z]$ being monic in $z$. One sees that $f_i$ is irreducible over $\tilde{k}(\bfeta)$ and moreover $h_i$ has a zero in $\tilde{k}(\bfeta)$ if and only if so does $f_i$. Let $\bff=\{f_1,\cdots,f_s\}$ and $V= \H_{\tilde{k},\bY}((1,\cdots,1),\bff,g)$. Suppose that $\bfc\in V$. Then $f_i(\bfc,z)$ has a zero in $\tilde{k}(\bfc)$ if and only if so does $h_i(\bfc,z)$. For an irreducible polynomial in $z$, it has a zero in its coefficient field if and only if it is of degree one. The corollary then follows from the fact that $h_i(\bfc,z)$ is irreducible and
$$\deg_z(h_i(\bfc,z))=\deg_z(f_i(\bfc,z))=\deg_z(f_i)=\deg_z(h_i).$$
\end{proof}
Due to Lemma 5A.3 and Remark 5A.3R of \cite{hrushovski}, one has the following proposition.
\begin{proposition}
\label{PROP:additivegroups}
Suppose that $\Gamma$ is a finitely generated subgroup of $\bG_a(\tilde{k}[\bfeta])$. Then there is an $l$-tuple of positive integers $\bfd$ such that $\H_{\tilde{k},\bY}(\bfd,\emptyset,1)\subset \B(\bY,\Gamma)$.
\end{proposition}
\begin{proof}
 We have that $\{\eta_1,\cdots,\eta_l\}$ is a transcendental basis of $\overline{k(\bY)}/k$. Let $V$ be the $\tilde{k}$-vector space in $\tilde{k}[\bfeta]$ spanned by $\Gamma$. As $\Gamma$ is finitely generated, $V$ is of finite dimension. By Remark 5A.3R of \cite{hrushovski}, there are positive integers $d_1,\cdots,d_l$ such that for any $\tilde{k}$-homomorphism $h: \tilde{k}[\bfeta]\rightarrow \tilde{k}^a\subset k$, if
 \begin{equation}
 \label{EQ:inequalities}
 [\tilde{k}(h(\eta_1),\dots, h(\eta_i)):\tilde{k}(h(\eta_1),\dots, h(\eta_{i-1}))]\geq d_i
 \end{equation}
for every $i=1,\cdots,l$,  then $h$ is injective on $V$. Here $\tilde{k}^a$ denotes the algebraic closure of $\tilde{k}$. Now let $\bfc\in \H_{\tilde{k},\bY}((d_1,\cdots,d_l),\emptyset,1)$. Then the restriction of $v_\bfc$ on $\tilde{k}[\bfeta]$ is a $\tilde{k}$-homomorphism from $\tilde{k}[\bfeta]$ to $k$ and $v_\bfc(\eta_i)=c_i$, where $\bfc=(c_1,\cdots,c_m)$. By definition (see Notation~\ref{DEF:hilbertian} (1)), 
\begin{align*}
  [\tilde{k}(v_\bfc(\eta_1),\cdots,v_\bfc(\eta_i)):\tilde{k}(v_\bfc(\eta_1),\cdots,v_\bfc(\eta_{i-1}))]&=[\tilde{k}(c_1,\cdots,c_i):\tilde{k}(c_1,\cdots,c_{i-1})]\\
  &\geq d_i
\end{align*}
i.e. the restriction of $v_\bfc$ satisfies the conditions (\ref{EQ:inequalities}). The above statement following from Remark 5A.3R of \cite{hrushovski} then implies that $v_\bfc$ is injective on $V$ and thus on $\Gamma$. In other words, $\bfc\in \B(\bY,\Gamma)$.
\end{proof}

Next, we are going to deal with the case that $\Gamma$ is a finitely generated subgroup of $\bG_m(\tilde{k}[\bfeta])$. It has been claimed on page 154 of \cite{serre} and in Discussion 5A.8 (4) of \cite{hrushovski} that the proof of N\'{e}ron's theorem can be applied to proving that $\B(\bY,\Gamma)\neq \emptyset$. The readers are referred to Section 6 in Chapter 9 of \cite{lang} or Section 11.1 of \cite{serre} for the proof of N\'{e}ron's theorem. Here we present a detailed proof of the claim made by Hrushovski and Serre. Let $K\subset L$ be a subfield.
\begin{definition}
Suppose $\Gamma$ is a subgroup of $\bG_m(K)$. The radical of $\Gamma$ in $K$, denoted by $\rad_K(\Gamma)$, is defined to be
$$
    \{\alpha\in \bG_m(K) \,\,| \,\,\mbox{$\exists\,\,l>0$ s.t. $\alpha^l \in \Gamma$}\}.
$$
We say $\Gamma$ is radical in $K$ if $\Gamma=\rad_K(\Gamma)$.
\end{definition}
It is easy to see that $\rad_K(\Gamma)$ is also a subgroup of $\bG_m(K)$. Moreover, we have the following proposition.
\begin{proposition}
\label{PROP:groupextension}
Suppose that $K$ is a field finitely generated over $\CQ$ and $\Gamma$ is a finitely generated subgroup of $\bG_m(K)$. Then $\rad_K(\Gamma)$ is also finitely generated.
\end{proposition}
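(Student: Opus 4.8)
The plan is to mirror the proof of Lemma~\ref{LM:numberfield}, with the ring of integers $\mathcal{O}_K$ replaced by a suitable finitely generated normal $\bZ$-subalgebra of $K$ and Dirichlet's unit theorem replaced by its analogue for finitely generated rings. First I would fix a convenient model for $K$: choose generators $a_1,\cdots,a_m$ of $\Gamma$ together with finitely many elements of $K$ generating $K$ over $\CQ$ (this is the only place the hypothesis that $K$ is finitely generated over $\CQ$ enters), let $R_0\subset K$ be the $\bZ$-subalgebra they generate, and let $R$ be the integral closure of $R_0$ in $K$. Since finitely generated $\bZ$-algebras have module-finite normalizations, $R$ is finite over $R_0$, hence is again a finitely generated $\bZ$-algebra; moreover $R$ is a normal Noetherian domain, its field of fractions is $K$, and $a_1,\cdots,a_m\in R$. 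Because $R$ is normal, an element $\alpha\in K^\times$ lies in $R^\times$ precisely when $\ord_\frakp(\alpha)=0$ for every height-one prime $\frakp$ of $R$, where $\ord_\frakp$ denotes the associated discrete valuation.

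Next, let $\frakp_1,\cdots,\frakp_\ell$ be the finitely many height-one primes $\frakp$ of $R$ for which $\ord_\frakp(a_i)\neq 0$ for some $i$, and consider the group homomorphism
$$
  \varphi: \rad_K(\Gamma)\rightarrow \bZ^\ell,\qquad
  \varphi(\alpha)=(\ord_{\frakp_1}(\alpha),\cdots,\ord_{\frakp_\ell}(\alpha)).
$$
Its image is a subgroup of $\bZ^\ell$ and hence finitely generated. For the kernel one argues as in Lemma~\ref{LM:numberfield}: if $\alpha\in\rad_K(\Gamma)$ satisfies $\varphi(\alpha)=0$, choose $l>0$ and $c_1,\cdots,c_m\in\bZ$ with $\alpha^l=\prod_i a_i^{c_i}$; then for every height-one prime $\frakq\notin\{\frakp_1,\cdots,\frakp_\ell\}$ one has $l\,\ord_\frakq(\alpha)=\sum_i c_i\,\ord_\frakq(a_i)=0$, whence $\ord_\frakq(\alpha)=0$, while $\ord_{\frakp_j}(\alpha)=0$ by assumption; normality of $R$ then forces $\alpha\in R^\times$. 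Conversely $R^\times\subset\ker(\varphi)$, so $\ker(\varphi)=\rad_K(\Gamma)\cap R^\times$. Granting that $R^\times$ is finitely generated, the kernel is finitely generated as a subgroup of $R^\times$, and therefore $\rad_K(\Gamma)$ is finitely generated, being an extension of the finitely generated group $\im(\varphi)$ by the finitely generated group $\ker(\varphi)$.

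The one point that is not a routine adaptation of the number-field argument is the assertion that $R^\times$ is finitely generated for a finitely generated $\bZ$-algebra domain $R$, and I expect this to be the main obstacle. This is the classical generalization of Dirichlet's $S$-unit theorem to finitely generated rings (a result essentially due to Roquette and Samuel); when $\trdeg_\CQ(K)=0$ it reduces to the ordinary $S$-unit theorem, since a finitely generated $\bZ$-subalgebra of a number field is contained in a ring of $S$-integers for some finite set of places $S$. One may either invoke it directly or establish it by induction on $\trdeg_\CQ(K)$, the base case being that of number fields. With this ingredient in hand, the argument above yields the proposition.
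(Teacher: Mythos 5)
Your proof is correct, but it takes a genuinely different route from the paper's. You work inside a finitely generated $\bZ$-subalgebra $R$ of $K$: you take the normalization of $\bZ[a_1,\dots,a_m,b_1,\dots,b_r]$ (with the $b_j$ generating $K$ over $\CQ$), use height-one primes of $R$ to build the map $\varphi$, identify $\ker(\varphi)=\rad_K(\Gamma)\cap R^\times$, and then invoke the Roquette--Samuel theorem that $R^\times$ is finitely generated for any finitely generated $\bZ$-algebra domain. The paper instead uses prime divisors of $K/\CQ$ in the sense of Zariski--Samuel (valuations trivial on $\CQ$), which makes the kernel of $\varphi$ equal to $\tilde{\CQ}\cap\rad_K(\Gamma)$, where $\tilde{\CQ}$ is the algebraic closure of $\CQ$ in $K$; it then specializes via a $\tilde{\CQ}$-homomorphism $\phi:\tilde{\CQ}[a_1,1/a_1,\dots,a_m,1/a_m]\to\bar{\CQ}$ to show $\ker(\varphi)\subset\rad_E(\phi(\Gamma))$ for a number field $E$, reducing directly to the already-proved number-field case (Lemma~\ref{LM:numberfield}). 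Both identifications of the kernel are correct. Your approach is more conceptual and uniform, but it outsources the hard part to Roquette--Samuel, a result of real depth (its standard proofs already use a Dirichlet/Mordell--Weil type input). The paper's approach is more self-contained: by specializing to a number field it needs only Dirichlet's unit theorem, which it has already packaged into Lemma~\ref{LM:numberfield}. It is worth noting that the specialization idea the paper uses here is exactly the strategy that would give an elementary proof of the case of Roquette--Samuel you need, so the two proofs are ultimately close in spirit even though yours cites the theorem rather than re-deriving the relevant instance.
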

\begin{proof}
Assume that $a_1,\cdots,a_m$ are generators of $\Gamma$.
We first prove the case that $K$ is a number field. Let $\mathfrak{p}_1,\cdots,\mathfrak{p}_\ell$ be all prime ideals of $\mathcal{O}_K$ satisfying that for each $1\leq i\leq \ell$, $\ord_{\mathfrak{p}_i}(a_j)\neq 0$ for some $1\leq j\leq m$, where $\ord_{\frakp_i}(a_j)$ denotes the order of $a_j$ at $\frakp_i$. Consider the group homomorphism $\varphi: \rad_K(\Gamma) \rightarrow \bZ^\ell$ defined by
  $$
      \varphi(\alpha)=(\ord_{\mathfrak{p}_1}(\alpha),\cdots,\ord_{\mathfrak{p}_\ell}(\alpha)).
  $$
One can verify that $\ker(\varphi)=\rad_K(\Gamma)\cap \mathcal{O}_K^\times$ and so the kernel is finitely generated, because $\mathcal{O}_K^\times$ is finitely generated. The image of $\varphi$ is also finitely generated, as it is a subgroup of $\bZ^\ell$. Hence $\rad_K(\Gamma)$ is finitely generated.

 Now assume that $K$ is transcendental over $\CQ$.  Due to the results on page 99 of \cite{zariski-samuel}, there is a set $S^\star$ of prime divisors of $K/\CQ$ such that for any $b\in K$ if $\ord_{\mathfrak{p}}(b)\geq 0$ for all $\mathfrak{p}\in S^\star$ then $b$ is algebraic over $\CQ$. Let $\mathfrak{p}_1,\cdots,\mathfrak{p}_\ell$ be all elements in $S^\star$ satisfying that for each $1\leq i\leq \ell$, $\ord_{\mathfrak{p}_i}(a_j)\neq 0$ for some $1\leq j\leq m$. Similarly, consider the group homomorphism $\psi: \rad_K(\Gamma) \rightarrow \bZ^\ell$ defined by
  $$
      \psi(\alpha)=(\ord_{\mathfrak{p}_1}(\alpha),\cdots,\ord_{\mathfrak{p}_\ell}(\alpha)).
  $$
One can check that $\ker(\psi)=\tilde{\CQ}\cap \rad_K(\Gamma)$ where $\tilde{\CQ}$ is the algebraic closure of $\CQ$ in $K$. The image of $\psi$ is a subgroup of $\bZ^\ell$ and so it is finitely generated. Therefore to show that $\rad_K(\Gamma)$ is finitely generated, it suffices to show that $\ker(\psi)$ is finitely generated. Let $R=\tilde{\CQ}[a_1,1/a_1,\cdots,a_m,1/a_m]$ and let $\phi$ be a $\tilde{\CQ}$-homomorphism from $R$ to $\bQ$. Then $\phi(a_i)\neq 0$ for all $1\leq i \leq m$. Let $\tilde{\Gamma}$ be the subgroup of $\bG_m(\bQ)$ generated by $\phi(a_1),\cdots,\phi(a_m)$ and let $E=\tilde{\CQ}(\phi(a_1),\cdots,\phi(a_m))$. Then $\tilde{\Gamma}=\phi(\Gamma)$ and $E$ is a number field. Suppose that $\gamma\in \ker(\psi)$, i.e.$\gamma\in \tilde{\CQ}$ and $\gamma^d\in \Gamma$ for some $d>0$. Applying $\phi$ to $\gamma$ yields that
$
   \gamma^d=\phi(\gamma)^d\in \tilde{\Gamma}.
$
This implies that $\gamma\in \rad_E(\tilde{\Gamma})$ and thus $\ker(\psi)\subset \rad_E(\tilde{\Gamma})$. Since $E$ is a number field, $\rad_E(\tilde{\Gamma})$ is finitely generated as we have already proved. So $\ker(\psi)$ is finitely generated.
\end{proof}
The example below shows that if $K$ is not finitely generated over $\CQ$ then $\rad_K(\Gamma)$ may not be finitely generated.
\begin{example}
Let $K=\CQ(\xi_2,\xi_3,\cdots)$ where $\xi_i$ is a primitive $i$-th root of unity, and let $\Gamma=\{1\}$. Then $\rad_K(\Gamma)$ contains all $\xi_i$, and thus it is not finitely generated.
\end{example}

 For a positive integer $\ell$ and a subgroup $\Gamma$ of $\bG_m(\tilde{k}[\bfeta])$, denote
 $$
    \Gamma_\ell=\{\gamma\in \Gamma | \gamma^\ell=1\}.
 $$
\begin{lemma}
\label{LM:torsiongroup}
Suppose that $\ell$ is a positive integer and $\Gamma$ is a finitely generated subgroup of $\bG_m(\tilde{k}[\bfeta])$ which is radical in $\tilde{k}(\bfeta)$. Then there exists a $\tilde{k}$-Hilbert set $V$ of $\bY$ such that for any $\bfc\in V$, $v_\bfc(\Gamma)$ is a  subgroup of $\bG_m(\tilde{k}(\bfc))$ and $v_\bfc(\Gamma_\ell)=v_\bfc(\Gamma)_\ell$. Moreover $v_\bfc(\Gamma)$ is finitely generated.
\end{lemma}
\begin{proof}
   Let $h$ be a polynomial in $\tilde{k}[\bfeta][z]$ such that
   $$z^\ell-1=h\prod_{c\in \Gamma_\ell}(z-c).$$
Then $h=0$ has no roots in $\tilde{k}(\bfeta)$, because $\Gamma$ is radical in $\tilde{k}(\bfeta)$. By Corollary~\ref{COR:rationalsols}, there exists a $\tilde{k}$-Hilbert set $\tilde{V}$ of $\bY$ such that for any $\bfc\in \tilde{V}$, $h(\bfc,z)=0$ has no root in $\tilde{k}(\bfc)$. Set $g=b_1\cdots b_N$ where $b_1,\cdots,b_N$ are generators of $\Gamma$. Let $V=\tilde{V}\cap \bY_g$ and $\bfc\in V$. Then $b_i(\bfc)\neq 0$ for all $1\leq i \leq N$ and thus the restriction of $v_\bfc$ on $\Gamma$ is a group homomorphism. This implies that $v_\bfc(\Gamma)$ is a finitely generated subgroup of $\bG_m(\tilde{k}(\bfc))$ because $\Gamma$ is finitely generated. In addition, note that $\bfc\in \tilde{V}$ and 
$$z^\ell-1=h(\bfc,z)\prod_{c\in \Gamma_\ell}(z-v_\bfc(c)).$$
One sees that  $v_\bfc(\Gamma)_\ell$, the set of all roots of $z^\ell-1=0$ in $v_\bfc(\Gamma)$, equals $\{v_\bfc(c)|c\in \Gamma_\ell\}$ and the latter set is nothing else but $v_\bfc(\Gamma_\ell)$.
\end{proof}

\begin{proposition}
\label{PROP:multiplicativegroups}
Suppose that $\Gamma$ is a finitely generated subgroup of $\bG_m(\tilde{k}[\bfeta])$. There exists a $\tilde{k}$-Hilbert set $V$ of $\bY$ such that $V\subset \B(\bY,\Gamma)$.
\end{proposition}
\begin{proof}
Set $\tilde{\Gamma}=\rad_{\tilde{k}(\bfeta)}(\Gamma)$. Then due to Proposition~\ref{PROP:groupextension}, $\tilde{\Gamma}$ is finitely generated. Let $q\in \tilde{k}[\bfeta_l]$ be nonzero element such that $\tilde{\Gamma}\subset \tilde{k}[\bfeta,1/q]$. We will first show the proposition for $\bY_q$ and $\tilde{\Gamma}$.

Let $T$ be the torsion group of $\tilde{\Gamma}$ and $\ell$ an integer greater than $1$ and divided by $|T|$. By Lemma~\ref{LM:torsiongroup}, there exists a $\tilde{k}$-Hilbert set $V_1$ of $\bY_q$ such that for any $\bfa\in V_1$, $v_{\bfa}(\tilde{\Gamma})$ is a finitely generated subgroup of $\bG_m(\tilde{k}(\bfa))$ and $v_{\bfa}(\tilde{\Gamma}_\ell)=v_{\bfa}(\tilde{\Gamma})_\ell$. Suppose that $\{b_1=1,b_2,\cdots,b_\nu\}$ is a set of representatives of $\tilde{\Gamma}/\tilde{\Gamma}^\ell$. Corollary~\ref{COR:rationalsols} implies that there exists a $\tilde{k}$-Hilbert set $V_2$ of $\bY_q$ such that for any $\bfa\in V_2$, $z^\ell-v_{\bfa}(b_i)=0$ has a root in $\tilde{k}(\bfa)$ if and only if $z^\ell-b_i=0$ has a root in $\tilde{k}(\bfeta)$.  Since $\tilde{\Gamma}$ is radical in $\tilde{k}(\bfeta)$, all roots of $z^\ell-b_i=0$ in $\tilde{k}(\bfeta)$ are in $\tilde{\Gamma}$ and then $z^\ell-b_i=0$ has a root in $\tilde{k}(\bfeta)$ only if $i=1$. Thus for each $\bfa\in V_2$, $z^\ell-v_{\bfa}(b_i)=0$ has a root in $\tilde{k}(\bfa)$ only if $i=1$.  We claim that $V_1\cap V_2\subset \B(\bY_q,\tilde{\Gamma})$. Suppose that $\bfa\in V_1\cap V_2$. Let $I=v_{\bfa}^{-1}(1)\cap \tilde{\Gamma}$. Then $I$ is a finitely generated subgroup of $\tilde{\Gamma}$. We shall show that $I=I^\ell$ and $I$ is free. This will imply $I=1$ because $\ell>1$, and thus $v_{\bfa}$ is injective on $\tilde{\Gamma}$ i.e. $\bfa\in \B(\bY_q,\tilde{\Gamma})$. Since $|T|$ divides $\ell$, if $I=I^\ell$ then $I$ is torsion-free and then it is free. So we only need to prove that $I=I^\ell$. Suppose $w\in I$. Write $w=b_i \bar{w}^\ell$ for some $i$ and some $\bar{w}\in \tilde{\Gamma}$. Then $v_{\bfa}(\bar{w})^{-\ell}=v_{\bfa}(b_i)$. In other words, $v_{\bfa}(\bar{w})^{-1}$ is a root of $z^\ell-v_{\bfa}(b_i)=0$ in $\tilde{k}(\bfa)$. The assumption on $\bfa$ indicates that $b_i=1$. This implies $w=\bar{w}^\ell$ and then $v_{\bfa}(\bar{w})^\ell=1$, i.e. $v_{\bfa}(\bar{w})\in v_{\bfa}(\tilde{\Gamma})_\ell$. As $v_{\bfa}(\tilde{\Gamma}_\ell)=v_{\bfa}(\tilde{\Gamma})_\ell$, there is $u\in \tilde{\Gamma}_\ell$ such that $v_{\bfa}(\bar{w})=v_{\bfa}(u)$. For such $u$, $\bar{w} u^{-1}\in I$. As $u^\ell=1$, $w=\bar{w}^\ell=(\bar{w}u^{-1})^\ell\in I^\ell$. Therefore $I=I^\ell$.  This proves our claim.

Now assume that $V_1\cap V_2=\H_{\tilde{k},\bY_q}(\bfd,\{\tilde{f}_1,\cdots,\tilde{f}_s\},\tilde{g})$ where $\tilde{f}_i \in \tilde{k}[\bfeta,1/q][z]$ irreducible over $\tilde{k}(\bfeta)$ and monic in $z$, and $\tilde{g}\in \tilde{k}[\bfeta,1/q]$. For each $i=1,\cdots,s$, there are positive integers $d_i,e_i$ and $f_i\in \tilde{k}[\bfeta,z]$ irreducible over $\tilde{k}(\bfeta)$ and monic in $z$ such that $q^{d_i}\tilde{f}_i=f_i(q^{e_i}z)$. Set
$
   \bff=\{f_1,\cdots,f_s\}.
$
Let $\mu$ be a positive integer such that $q^\mu \tilde{g}\in \tilde{k}[\bfeta]$ and set $g=q^{\mu+1}\tilde{g}$. One then has that $\H_{\tilde{k},\bY}(\bfd,\bff,g)\subset \B(\bY,\Gamma)$.
\end{proof}
\begin{theorem}
\label{TH:basicopen}
Every basic open subset of $\bX$ is Zariski dense.
\end{theorem}
\begin{proof}
We first show that every basic open subset of $\bX$ is not empty.
Assume that $\Gamma_1,\cdots,\Gamma_s$ are subgroups of $\bG_a(L)$ and $\Gamma_{s+1},\cdots,\Gamma_\ell$ are subgroups of $\bG_m(L)$. Let $\bY_i$ and $\tilde{\bY}$ be the varieties associated to $k[\bfeta,\Gamma_i]$ and $k[\bfeta, \cup_{i=1}^\ell \Gamma_i]$ respectively. By definition, one has that $\B(\bX,\Gamma_i)=p_{\bY_i/\bX}(\B(\bY_i,\Gamma_i))$ and
$p_{\tilde{\bY}/\bY_i}(\B(\tilde{\bY},\Gamma_i))\subset \B(\bY_i,\Gamma_i).$ Applying $p_{\bY_i/\bX}$ to the latter inclusion yields that
$$
   p_{\bY_i/\bX}(p_{\tilde{\bY}/\bY_i}(\B(\tilde{\bY},\Gamma_i)))\subset p_{\bY_i/\bX}(\B(\bY_i,\Gamma_i))=\B(\bX,\Gamma_i).
$$
Therefore to show that $\cap_{i=1}^\ell \B(\bX,\Gamma_i)\neq \emptyset$, it suffices to show that $\cap_{i=1}^\ell \B(\tilde{\bY},\Gamma_i)\neq \emptyset$. The latter assertion follows from Propositions~\ref{PROP:additivegroups} and \ref{PROP:multiplicativegroups} where $\tilde{k}$ is taken to be the field finitely generated over $\CQ$ such that the $\Gamma_i$ are in $\tilde{k}[\bfeta]$.

Suppose that $U$ is a basic open subset of $\bX$ and $U$ is not Zariski dense, i.e. there is a nonzero $g\in k[\bX]$ which vanishes on $U$. By Definition~\ref{DEF:basicopensets}, $v_\bfc(g)=g(\bfc)\neq 0$ for all $\bfc\in \bX_g$. So $U\cap \bX_g=\emptyset$. However by Definition~\ref{DEF:basicopensets} $U\cap \bX_g$ is a basic open subset of $\bX$ and thus it is not empty, a contradiction.
\end{proof}
The following two lemmas will be used later.
\begin{lemma}
\label{LM:integers}
Suppose that $f\in k[\bX][z]$. There is a finitely generated subgroup $\Gamma$ of $\bG_a(\overline{k(\bX)})$ such that for any $\bfc\in \B(\bX,\Gamma)$, one has that $\Z(f)=\Z(f(\bfc,z))$.
\end{lemma}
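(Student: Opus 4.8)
The plan is to separate the integer roots $f$ already has from any ``new'' ones a specialization might create. First, if $f=0$ then $\Z(f)=\bZ=\Z(\varphi(f))$ for every $\varphi$, so any $\Gamma$ works (say $\Gamma=\{0\}$). So assume $f\neq 0$; then $\Z(f)=\{m_1,\dots,m_t\}$ is finite. I would divide $f$ by the monic integer polynomial $\prod_i(y-m_i)^{e_i}$, where $e_i$ is the exact multiplicity of $m_i$ as a root of $f$, to get $f=\prod_i(y-m_i)^{e_i}\cdot h$. Division by a monic polynomial in $\bZ[y]\subset D[y]$ keeps the quotient in $D[y]$, so $h\in D[y]$, and by the choice of the $e_i$ the polynomial $h$ has no integer root.

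The next point is that the $m_i$ lie in $\bZ\subset k$, hence are fixed by every $\varphi\in\Homk$; thus $\varphi(f)=\prod_i(y-m_i)^{e_i}\cdot\varphi(h)$ and $\Z(\varphi(f))=\{m_1,\dots,m_t\}\cup\Z(\varphi(h))=\Z(f)\cup\Z(\varphi(h))$. So it is enough to pick $\Gamma$ forcing $\Z(\varphi(h))=\emptyset$ for all $\varphi\in\B(D,\Gamma)$. To this end I would let $b\in D\setminus\{0\}$ be the leading coefficient of $h$ and $\gamma_1,\dots,\gamma_{d'}\in\Omega$ (with $d'=\deg_y h$) be the roots of $h$ in $\Omega$; since $h$ has no integer root, no $\gamma_j$ is an integer. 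Take $\Gamma$ to be the finitely generated subgroup of $\bG_a(\Omega)$ generated by $1,b,\gamma_1,\dots,\gamma_{d'}$.

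To finish, let $\varphi\in\B(D,\Gamma)$; by definition it extends to a $k$-homomorphism $\tilde\varphi\colon D[\Gamma]\to\bar k$ injective on $\Gamma$. Injectivity at $b\in\Gamma\setminus\{0\}$ gives $\varphi(b)=\tilde\varphi(b)\neq 0$, and injectivity at each $\gamma_j-m\in\Gamma$ (nonzero because $\gamma_j\notin\bZ$) gives $\tilde\varphi(\gamma_j)-m=\tilde\varphi(\gamma_j-m)\neq 0$ for all $m\in\bZ$, so $\tilde\varphi(\gamma_j)\notin\bZ$. Applying $\tilde\varphi$ coefficientwise to the identity $h=b\prod_j(y-\gamma_j)$, which holds in $D[\gamma_1,\dots,\gamma_{d'}][y]$ (note $D[\Gamma]=D[\gamma_1,\dots,\gamma_{d'}]$ since $1,b\in D$), yields $\varphi(h)=\varphi(b)\prod_j(y-\tilde\varphi(\gamma_j))$; since $\varphi(b)\neq 0$ the roots of $\varphi(h)$ are exactly the $\tilde\varphi(\gamma_j)$, none an integer, so $\Z(\varphi(h))=\emptyset$ and $\Z(\varphi(f))=\Z(f)$. (If $d'=0$ there are no $\gamma_j$, $h=b$, and the same argument with $\Gamma$ generated by $1,b$ still gives $\varphi(h)=\varphi(b)\neq0$.) The argument is largely routine; the two places to be careful are keeping the cofactor $h$ in $D[y]$ --- which works only because the divisor is monic with integer coefficients --- and forcing $\varphi(b)\neq 0$ by placing $b$ among the generators of $\Gamma$, so that $\varphi(h)$ really splits into $d'$ linear factors over $\bar k$ and its root set stays under control.
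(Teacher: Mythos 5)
Your proof is correct and follows essentially the same route as the paper's: the paper also takes $\Gamma$ to be generated by $1$, the leading coefficient, and the non-integer roots of $f$, and uses injectivity on $\Gamma$ to keep the leading coefficient nonzero and the images of those roots outside $\bZ$. Your preliminary factorization of the integer roots and the explicit treatment of $f=0$ are just additional bookkeeping on top of the same idea.
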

\begin{proof}
  Let $\alpha_1,\cdots,\alpha_\ell$ be all zeroes of $f$ in $\overline{k(\bX)}\setminus \bZ$ and $a$ be the leading coefficient of $f$. Set $\Gamma$ to be the subgroup of $\bG_a(\overline{k(\bX)})$ generated by $1,a, \alpha_1,\cdots,\alpha_\ell$ and let $\bY$ be the variety associated to $k[\bX][\alpha_1,\cdots,\alpha_\ell]$. Suppose that $\bfc\in \B(\bX,\Gamma)$. By the definition of basic open subsets, $\bfc$ can be extended to a point $\tilde{\bfc}\in \B(\bY,\Gamma)$. One sees that the $\Z(f)\subset \Z(f(\tilde{\bfc},z))$ and $v_{\tilde{\bfc}}(\alpha_i)\notin \bZ$ for all $1\leq i \leq \ell$. Therefore $\Z(f)=\Z(f(\tilde{\bfc},z))=\Z(f(\bfc,z))$.
\end{proof}
In the following, for a matrix $M$ with entries in $k[\bX]$, the rank of $M$ is defined to be the rank of $M$ regarded as a matrix over $k(\bX)$.
\begin{lemma}
\label{LM:matrices}
Assume that $M$ is a matrix in $k[\bX]^{\ell \times n}$. Then there is a nonzero $g\in k[\bX]$ such that for any $\bfc\in \bX_g$, $\rank(M)=\rank(M(\bfc))$.
\end{lemma}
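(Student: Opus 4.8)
The plan is to reduce the statement to the elementary determinantal description of matrix rank. First recall that when $\Gamma=\bZ c$ is the cyclic subgroup of $\bG_a(\Omega)$ generated by a single element $c\in D$, a $k$-linear map is injective on $\bZ c$ if and only if it does not kill $c$; hence $\B(D,c)=\Homk\setminus\V(c)=\{\varphi\in\Homk\mid\varphi(c)\ne0\}$, exactly as already observed in the proof of Corollary~\ref{COR:zariskidense}. So it suffices to exhibit one nonzero $c\in D$ such that $\varphi(c)\ne0$ forces $\rank(\varphi(M))=\rank(M)$.

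Set $r=\rank(M)$, the rank of $M$ computed over the field of fractions $F$ of $D$; note $D$ is an integral domain, being a subring of the field $\Omega$. By the standard minor criterion there is an $r\times r$ submatrix of $M$ whose determinant is nonzero, while every $(r+1)\times(r+1)$ minor of $M$ vanishes. All of these minors are elements of $D$. Choose $c\in D$ to be such a nonzero $r\times r$ minor.

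Now let $\varphi\in\B(D,c)$, so that $\varphi(c)\ne0$. Since forming a minor of a matrix is a polynomial expression in its entries, $\varphi$ applied to the entries of $M$ commutes with forming minors. Thus the $r\times r$ minor of $\varphi(M)$ corresponding to $c$ equals $\varphi(c)\ne0$, which gives $\rank(\varphi(M))\ge r$; and every $(r+1)\times(r+1)$ minor of $\varphi(M)$ is the $\varphi$-image of the corresponding vanishing minor of $M$, hence is $0$, which gives $\rank(\varphi(M))\le r$. Combining the two inequalities yields $\rank(\varphi(M))=r=\rank(M)$, as required.

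There is essentially no real obstacle here: the only points requiring (minor) care are the harmless identification of $\B(D,c)$ with the principal open set $\{\varphi(c)\ne0\}$, and the observation that the minors of $M$ are honest elements of $D$ so that $\varphi$ may legitimately be applied to them; everything else is the elementary fact that the rank of a matrix over a field is characterized by nonvanishing and vanishing of minors.
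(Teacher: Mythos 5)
Your proof is correct and follows essentially the same route as the paper: take $c$ to be a nonzero $r\times r$ minor of $M$ (with $r=\rank(M)$), identify $\B(D,c)$ with $\{\varphi\in\Homk \mid \varphi(c)\neq 0\}$, and use the minor criterion for rank. The only cosmetic difference is that the paper handles $r=0$ as a separate trivial case and dispenses with the upper bound $\rank(\varphi(M))\le\rank(M)$ by calling it clear, whereas you spell out the vanishing of the $(r+1)\times(r+1)$ minors.
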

\begin{proof}
Clearly, $\rank(M(\bfc))\leq \rank(M)$ for all $\bfc\in \bX$. Let $r=\rank(M)$. If $r=0$, there is nothing to prove. Suppose that $r>0$ and $g$ is a nonzero $r\times r$ minors of $M$. Suppose $\bfc\in \bX_g$. It is easy to see that $v_\bfc(g)$ is a $r\times r$ minor of $M(\bfc)$. Since $v_\bfc(g)\neq 0$, $\rank(M(\bfc))\geq r$. This implies that $r=\rank(M(\bfc))$.
\end{proof}

\section{Algebraic groups under specialization}
\label{SEC:algebraicgroups}
Assume that $G$ is an algebraic subgroup of $\GL_n(\overline{k(\bX)})$ defined over $k[\bX]$, i.e. the vanishing ideal of $G$ is generated by a finite subset $S$ in $k[\bX][X]$. Let $\frakX$ be a basis of $\bfchi(G^\circ)$ as a free abelian group. We further assume that every character in $\frakX$ is represented by an element in $k[\bX][X,1/\det(X)]$. Remark that $G^\circ$ is also defined over $k[\bX]$ (see (7.3) on page 210 of \cite{humphreys}). We shall use $G_\bfc$ to denote the variety in $\GL_n(k)$ defined by $v_\bfc(S)$ for $\bfc\in \bX$. In this section, we shall prove that there is a nonzero $c\in k[\bX]$ such that if $\bfc\in \bX_c$ then $G_\bfc$ is an algebraic subgroup of $\GL_n(k)$ satisfying that $\dim(G_\bfc)=\dim(G)$ and $v_\bfc(\frakX)$ is a basis of $\bfchi(G_\bfc^\circ)$. Note that when $G$ is commutative, the results of Lemma~\ref{LM:unipotents} and Proposition~\ref{PROP:character} have already appeared in \cite{hrushovski} (see Example 5A.6 and Lemma 5.11 respectively).

Let us start with a few remarks which follows from the application of Remark 5A.5 of \cite{hrushovski} to polynomial equalities with coefficients in $k(\bX)$.
\begin{remark}
\label{RM:corollaries}
\begin{enumerate}
\item
If $\tilde{S}\subset k[\bX][X]$ is a finite set defining $G$, then there is a nonempty open subset $U$ of $\bX$ such that $G_\bfc$ is defined by $v_\bfc(\tilde{S})$ for all $\bfc\in U$. Thus the notation $G_\bfc$ makes sense. To see this, note that $\tilde{S}$ and $S$ define the same variety if and only if they generate the same radical ideal i.e. for every $P\in S, \tilde{P}\in \tilde{S}$, there are $\alpha_{P,\tilde{Q}}, \beta_{\tilde{P},Q}\in k(\bX)[X]$ such that
$$
    P^{d_P}=\sum_{\tilde{Q}\in \tilde{S} } \alpha_{P,\tilde{Q}}\tilde{Q}, \,\,\tilde{P}=\sum_{Q\in S} \beta_{\tilde{P}, Q}Q,
$$
where $d_P$ is a positive integer. Any nonempty open subset of $\bX$ on which all $\alpha_{P,\tilde{Q}}, \beta_{\tilde{P},Q}$ are well-defined will be the set as required. The open subsets in (2) and (3) below can be obtained similarly.
\item $G_\bfc$ is an algebraic group for all $\bfc$ being in some nonempty open subset of $\bX$. By Exercise 5 on page 57 of \cite{humphreys}, for a variety $H$ in $\GL_n(\overline{k(\bX)})$, $H$ is an algebraic group if and only if $I_n\in H$ and $H$ is closed under taking products. The latter condition can be described as follows: For each $P\in S$, there are $\alpha_{P,Q}, \beta_{P,Q}\in k(\bX)[X,Z,1/\det(XZ)]$ such that
$$
    P(XZ)=\sum_{Q\in S} \alpha_{P,Q}Q(X)+\sum_{Q\in S}\beta_{P,Q}Q(Z).
$$
Likewise, if $\chi$ is a character of $G$ then $v_\bfc(\chi)$ is a character of $G_\bfc$ for all $\bfc$ being in some nonempty open subset of $\bX$.
\item Suppose that $H$ and $\tilde{H}$ are two varieties defined over $k[\bX]$ and $H\cap \tilde{H}=\emptyset$. Then $H_\bfc\cap \tilde{H}_\bfc=\emptyset$ for all $\bfc$ being in some nonempty open subset of $\bX$. Note that $H\cap \tilde{H}=\emptyset$ if and only if there are polynomials $P$ and $Q$ in the vanishing ideal of $H$ and $\tilde{H}$ respectively such that $P+Q=1$.
\end{enumerate}
\end{remark}

We can view $G$ as a family of algebraic varieties $G_\bfc$ in $\GL_n(k)$ parameterized by $\bX$. More precisely, suppose that $\bX\subset k^m$ and $\bfeta$ is a generic point of $\bX$. Denote
$$
    J=\{P\in k[y_1,\cdots,y_m,X,1/\det(X)] | \,\forall\,\bfb\in G, P(\bfeta,\bfb)=0\}.
$$
Let $\bY\subset k^m\times \GL_n(k)$ be the variety defined by $J$. Then $\bY$ is a variety over $k$ of dimension $\dim(\bX)+\dim(G)$. Define
\[
  \begin{array}{ccccccccccccc}
  \pi_1:& \bY  &\longrightarrow &\bX &   & \pi_2:& \bY  &\longrightarrow &\GL_n(k)\\
        & (\bfc,\bfb)&\longrightarrow & \bfc & &    & (\bfc,\bfb)&\longrightarrow & \bfb
  \end{array}.
\]
One sees that $G=\pi_2(\pi_1^{-1}(\bfeta))$. Note that $\pi_2(\pi_1^{-1}(\bfc))$ is the variety in $\GL_n(k)$ defined by $\{P(\bfc,X,1/\det(X))|P\in J\}$.
\begin{proposition}
\label{PROP:groupsfibre} There is a nonempty open subset $U$ of $\bX$ such that for any $\bfc\in U$, $G_\bfc$ is an algebraic subgroup of $\GL_n(k)$ with dimension $\dim(G)$ and
$$
   [G_\bfc:G_\bfc^\circ]=[G:G^\circ]=\ell.
$$
\end{proposition}
\begin{proof}
Note that $\{P(\bfeta,X,1/\det(X))|P\in J\}$ also defines $G$. The discussion in Remark~\ref{RM:corollaries} (1) implies that there is a nonempty open subset $\tilde{U}$ of $\bX$ such that $G_\bfc=\pi_2(\pi_1^{-1}(\bfc))$ for any $\bfc\in \tilde{U}$.
Hence it suffices to prove the proposition for $\pi_2(\pi_1^{-1}(\bfc))$.
Let $G_1,\cdots, G_\ell$ be all irreducible component of $G$. Let $D$ be a finitely generated $k[\bX]$-algebra in $\overline{k(\bX)}$ such that each $G_i$ is defined over $D$, i.e. the vanishing ideal of each $G_i$ in $\overline{k(\bX)}[X,1/\det(X)]$ is generated by finitely many polynomials in $D[X,1/\det(X)]$. Let $\tilde{\bX}$ be the variety over $k$ associated to $D$. By Lemma~\ref{LM:kolchinlemma}, for each nonempty open subset $\tilde{V}$ of $\tilde{\bX}$, there is a nonempty open subset $V$ of $\bX$ such that $V\subset p_{\tilde{\bX}/\bX}(\tilde{V})$. Furthermore, as the morphism $p_{\tilde{\bX}/\bX}$ is induced by the inclusion $k[\bX]\subset k[\tilde{\bX}]$, one sees that $v_\bfc(\bfeta)=v_{p_{\tilde{\bX}/\bX}(\bfc)}(\bfeta)$ for all $\bfc\in \tilde{\bX}$. This implies that $G_\bfc=G_{p_{\tilde{\bX}/\bX}(\bfc)}$ for all $\bfc\in \tilde{\bX}$. Therefore it suffices to prove the proposition with the variety $\tilde{\bX}$ over whose coordinate ring all $G_i$ are defined. In the following, for the sake of notation, we assume that all $G_i$ are defined over $k[\bX]$. Let $\bfxi_i$ be a generic point of $G_i$ over $\overline{k(\bX)}$ and set
$$
   J_i=\{Q\in k[y_1,\cdots,y_m, X, 1/\det(X)] | Q(\bfeta, \bfxi_i)=0\}.
$$
Let $\bY_i$ be the variety over $k$ defined by $J_i$. Then $\bY_i$ is irreducible because it has a generic point $(\bfeta,\bfxi_i)$. Moreover one can verify that $J=\cap_{i=1}^\ell J_i$. Hence $\bY=\cup_{i=1}^\ell \bY_i$. Additionally, one has that $G_i=\pi_2(\pi_1|_{\bY_i}^{-1}(\bfeta))$ and $\pi_2(\pi_1^{-1}(\bfc))=\cup_{i=1}^\ell \pi_2(\pi_1|_{\bY_i}^{-1}(\bfc))$.

By Remark~\ref{RM:corollaries}, there is a nonempty open subset $U_1$ of $\bX$ such that $\pi_2(\pi_1^{-1}(\bfc))$ is an algebraic subgroup for any $\bfc\in U_1$.
Note that $\pi_1|_{\bY_i}$ is dominant and because $G_i$ is irreducible over $\overline{k(\bX)}$, so is $\pi_1|_{\bY_i}^{-1}(\bfeta)$ which is equal to $\bfeta \times G_i$. By Theorem 1 on page 139 of \cite{shafarevich} and Proposition on page 33 of \cite{humphreys}, there is a nonempty open subset $U_2 \subset \bX$ such that for any $\bfc\in U_2$, $\pi_1|_{\bY_i}^{-1}(\bfc)$ is irreducible and of dimension $\dim(G)$. Since $G_i\cap G_j=\emptyset$ if $i\neq j$, by Remark~\ref{RM:corollaries} again, there is a nonempty open subset $U_3$ of $\bX$ such that for any $\bfc\in U_3$ and $i\neq j$,
$
   \pi_2(\pi_1|_{\bY_i}^{-1}(\bfc))\cap \pi_2(\pi_1|_{\bY_j}^{-1}(\bfc))=\emptyset.
$
Now set $U=U_1\cap U_2\cap U_3$. Then for any $\bfc \in U$, we have that $\pi_2(\pi_1^{-1}(\bfc))$ is an algebraic group and
$
   [\pi_2(\pi_1^{-1}(\bfc)):\pi_2(\pi_1^{-1}(\bfc))^\circ]=[G:G^\circ]=\ell.
$ Finally, note that $\pi_1|_{\bY_i}^{-1}(\bfc)=\bfc\times \pi_2(\pi_1|_{\bY_i}^{-1}(\bfc))$. Hence for each $\bfc\in U$, $\pi_2(\pi_1|_{\bY_i}^{-1}(\bfc))$ is of dimension $\dim(G)$ and so is $\pi_2(\pi_1^{-1}(\bfc))$. 
\end{proof}

\begin{lemma}
\label{LM:unipotents}
Assume that $G$ is generated by unipotent elements. Then there is a nonempty open subset $U$ of $\bX$ such that for any $\bfc\in U$, $G_\bfc$ is an algebraic group generated by unipotent elements and of dimension $\dim(G)$.
\end{lemma}
\begin{proof}
Due to Lemma C on page 96 of \cite{humphreys}, any unipotent element of $G$ that is not equal to the identity generates a connected 1-dimensional algebraic subgroup of $G$. Let $\calU$ be the set of all connected 1-dimensional algebraic subgroups of $G$ and $\tilde{G}$ the algebraic subgroup of $G$ generated by $\cup_{M\in \calU} M $. Then $\tilde{G}=G$ and by the proposition on page 55 of \cite{humphreys}, there are $M_1,\cdots,M_\ell$ in $\calU$ such that $\tilde{G}=M_1M_2\cdots M_\ell$. Furthermore, $\ell$ can be taken to be not greater than $2\dim(G)$. Now for each $i=1,\cdots,\ell$, there is a nilpotent matrix $\bfn_i$ in $\Mat_n(\overline{k(\bX)})$ such that
$$
    M_i=\left\{\left.\sum_{j=0}^{n-1} \frac{\bfn_i^j c^j}{j!}\right| c\in \overline{k(\bX)} \right\}.
$$
Let $D$ be a finitely generated $k[\bX]$-algebra in $\overline{k(\bX)}$ such that all entries of each $\bfn_i$ are in $D$, and let $\tilde{\bX}$ be the variety over $k$ associated to $D$. By Lemma~\ref{LM:kolchinlemma} and an argument similar to that in the proof of Proposition~\ref{PROP:groupsfibre}, one only need to prove the lemma with $\tilde{\bX}$. For the sake of notation, we may assume that all $\bfn_i$ are in $\Mat_n(k[\bX])$.
Set
$$
   (P_{i,j})= \prod_{i=1}^\ell \left(\sum_{j=0}^{n-1}\frac{\bfn_i^j t_i^j}{j!}\right)\in \GL_n(k[\bX][t_1,\cdots,t_\ell])
$$
where $t_1,\cdots,t_\ell$ are indeterminates.
Then $(P_{i,j})$ is a generic point of $G$. Assume that $d=\dim(G)$ and $P_{i_1,j_1}, \cdots, P_{i_d,j_d}$ are algebraically independent over $k(\bX)$. We claim that there is a nonempty open subset of $\bX$ such that for any $\bfc$ in this set, $v_\bfc(P_{i_1,j_1}), \cdots, v_\bfc(P_{i_d,j_d})$ are algebraically independent over $k$. For $\bfc\in \bX$, denote by $I_\bfc$ the ideal generated by all $y_{i,j}-v_\bfc(P_{i,j})$ in $k[t_1,\cdots,t_\ell, y_{1,1},\cdots,y_{n,n}]$. Let $S_\bfc$ be the reduced Gr\"{o}bner basis of $I_\bfc$ with respect to a lexicographic ordering
where every $t_i$ is greater than every $y_{l,m}$ and every $y_{l,m}$ with $(l,m)\neq (i_s,j_s)$ for all $s=1,\cdots,d$ is greater than every $y_{i_s,j_s}$.
Then $v_\bfc(P_{i_1,j_1}), \cdots, v_\bfc(P_{i_d,j_d})$ are algebraically dependent over $k$ if and only if $S_\bfc$ contains at least one polynomial in $k[y_{i_1,j_1},\cdots,y_{i_d,j_d}]$. Moreover for every $Q\in S_\bfc\cap k[y_{i_1,j_1},\cdots,y_{i_d,j_d}]$ one has that $Q(v_\bfc(P_{i_1,j_1}), \cdots, v_\bfc(P_{i_d,j_d}))=0$.
By Corollary 8.3 of \cite{dube}, there is an integer $N$ only depending on $n, \ell$ such that for every $\bfc\in \bX$, the total degree of each polynomial in $S_\bfc$ is not greater than $N$.
These imply that if $v_\bfc(P_{i_1,j_1}), \cdots, v_\bfc(P_{i_d,j_d})$ are algebraically dependent over $k$ then there is a nonzero $Q_\bfc$ in $k[y_{i_1,j_1},\cdots,y_{i_d,j_d}]$ of total degree not greater than $N$ such that
$$
   Q_\bfc(v_\bfc(P_{i_1,j_1}), \cdots, v_\bfc(P_{i_d,j_d}))=0.
$$
Now for nonnegative integers $s_1,\cdots,s_d$ with $s_1+\cdots+s_d\leq N$, write
$$
    P_{i_1,j_1}^{s_1}\cdots P_{i_d,j_d}^{s_d}=\sum_{\bfmu=(\mu_1,\cdots,\mu_\ell)} c_{s_1,\cdots,s_d,\bfmu} t_1^{\mu_1}t_2^{\mu_2}\cdots t_\ell^{\mu_\ell}
$$
where $0\leq \mu_i\leq N(n-1)$ and $c_{s_1,\cdots,s_d,\bfmu}\in k[\bX]$. Let $C$ be the $\binom{N+d}{d}\times (N(n-1)+1)^\ell$ matrix formed by $c_{s_1,\cdots,s_d,\bfmu}$. Since $P_{i_1,j_1}, \cdots, P_{i_d,j_d}$ are algebraically independent, $C$ is of full rank $\binom{N+d}{d}$, i.e. there is a nonzero $\binom{N+d}{d}\times \binom{N+d}{d}$-minor $g$ of $C$. Suppose that $v_\bfc(P_{i_1,j_1}), \cdots, v_\bfc(P_{i_d,j_d})$ are algebraically dependent over $k$ for some $\bfc\in \bX_g$. The choice of $N$ implies that the left kernel of $v_\bfc(C)$ has a nonzero element. This contradicts the fact that $v_\bfc(C)$ is of full rank. Thus $v_\bfc(P_{i_1,j_1}), \cdots, v_\bfc(P_{i_d,j_d})$ are algebraically independent over $k$ for all $\bfc\in \bX_g$. This proves the claim. By Proposition~\ref{PROP:groupsfibre}, there is a nonempty open subset $U_1$ of $\bX$ such that for any $\bfc\in U_1$, $G_\bfc$ is a connected algebraic group of dimension $\dim(G)$. Set $U=U_1\cap \bX_g$. Then for any $\bfc\in U$, since $(v_\bfc(P_{i,j}))$ is obviously a point of $G_\bfc$, it is a generic point of $G_\bfc$. Hence $G_\bfc$ is generated by unipotent elements.
\end{proof}
Let $H$ be a connected algebraic subgroup of $\GL_n(\overline{k(\bX)})$. The following lemma gives a criterion for a finite subset $\frakX$ to be a basis of $\bfchi(H)$ as a free abelian group. We say $\frakX$ is multiplicatively independent if the equality $\prod_{\chi\in \frakX}\chi^{d_\chi}=1$ with $d_\chi\in \bZ$ implies that $d_\chi=0$ for all $\chi\in \frakX$.
\begin{lemma}
\label{LM:characters}
Let $\frakX\subset \bfchi(H)$ be a finite set. Then $\frakX$ is a basis of $\bfchi(H)$ if and only if $\frakX$ is multiplicatively independent and $\cap_{\chi\in \frakX}\ker(\chi)$ is generated by unipotent elements.
\end{lemma}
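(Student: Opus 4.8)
The plan is to reduce the statement to the case of a torus, by passing to the largest torus quotient of $H$, and then to argue directly with character lattices. Let $N\subseteq H$ denote the subgroup generated by all unipotent elements of $H$; since in characteristic zero each such element lies in a one--parameter unipotent subgroup, $N$ is closed, connected and normal. The structural input I would use is the classical fact that $T:=H/N$ is a torus and that the projection $\pi\colon H\to T$ induces an isomorphism $\pi^{*}\colon X(T)\to X(H)$: every character of $H$ is trivial on all unipotent elements (their images in $\bG_m$ are unipotent, hence trivial) and on the commutator subgroup (since $\bG_m$ is abelian), so it factors through $T$, and $H/N$ carries no nontrivial unipotent element and is therefore a torus (this rests on the structure theory of connected linear algebraic groups over an algebraically closed field of characteristic zero: modulo the unipotent radical $H$ becomes reductive, and a connected reductive group is the product of its central torus with its derived group, which is semisimple and hence generated by unipotents; see \cite{humphreys}). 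In particular $X(H)$ is free of finite rank, every unipotent element of $H$ lies in $N$, and $N=\bigcap_{\chi\in X(H)}\ker(\chi)$.

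For the forward implication, suppose $\X$ is a basis of $X(H)$. Being a $\bZ$--basis of a free abelian group, $\X$ is in particular multiplicatively independent. Moreover $\X$ generates $X(H)$, so every $\psi\in X(H)$ is a product of integral powers of members of $\X$; hence $\bigcap_{\chi\in\X}\ker(\chi)\subseteq\ker(\psi)$ for every $\psi$, and therefore $\bigcap_{\chi\in\X}\ker(\chi)=\bigcap_{\psi\in X(H)}\ker(\psi)=N$, which is generated by unipotent elements.

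For the converse, assume $\X$ is multiplicatively independent and that $M:=\bigcap_{\chi\in\X}\ker(\chi)$ is generated by unipotent elements. Every unipotent element of $H$ lies in $M$ (characters kill unipotents), and $M$ is generated by its own unipotent elements (which are unipotent elements of $H$), so $M$ coincides with the subgroup generated by all unipotent elements of $H$, i.e. $M=N$. Consequently each $\chi\in\X$ factors as $\chi=\bar\chi\circ\pi$ with $\bar\chi\in X(T)$, the set $\overline{\X}:=\{\bar\chi:\chi\in\X\}$ is multiplicatively independent (since $\pi^{*}$ is injective), and from $M=\pi^{-1}\bigl(\bigcap_{\bar\chi\in\overline{\X}}\ker(\bar\chi)\bigr)=N=\pi^{-1}(\{1\})$ we obtain $\bigcap_{\bar\chi\in\overline{\X}}\ker(\bar\chi)=\{1\}$ in $T$. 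I would then finish in the torus case: identifying $T(K)$ with $\Hom_{\bZ}(X(T),K^{\times})$ and writing $\Lambda\subseteq X(T)$ for the subgroup generated by $\overline{\X}$, one has $\bigcap_{\bar\chi\in\overline{\X}}\ker(\bar\chi)\cong\Hom_{\bZ}(X(T)/\Lambda,K^{\times})$; since $K$ is algebraically closed of characteristic zero, every nonzero finitely generated abelian group admits a nonzero homomorphism to $K^{\times}$ (it has $\bZ$ or a nontrivial finite cyclic group as a quotient, and $K^{\times}$ contains a free part and all roots of unity), so $X(T)/\Lambda=0$, that is $\Lambda=X(T)$. Combined with the multiplicative (hence $\bZ$--linear) independence of $\overline{\X}$, this shows $\overline{\X}$ is a $\bZ$--basis of $X(T)$; applying $\pi^{*}$ yields that $\X$ is a basis of $X(H)$.

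The only substantial ingredient is the structural reduction in the first paragraph — identifying $\bigcap_{\chi\in X(H)}\ker(\chi)$ with the subgroup generated by the unipotent elements and recognizing $H/N$ as a torus — which is classical and can simply be quoted. The point to be careful about is the reading of ``generated by unipotent elements'': it must be understood as ``the unipotent elements generate the whole group'', and it is exactly this that excludes proper subgroups of finite index (for instance $\ker(t\mapsto t^{2})=\mu_{2}\subset\bG_m$, which contains no nontrivial unipotent element), thereby upgrading multiplicative independence to the property of being a basis.
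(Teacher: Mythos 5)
Your proof is correct and takes essentially the same route as the paper's: both identify $\bigcap_{\chi\in\X}\ker(\chi)$ with the subgroup generated by all unipotent elements of $H$ (equivalently, the common kernel of all characters), pass to the torus quotient, and conclude by comparing character lattices. The paper packages the final step as the isomorphism $\bar H\to\bG_m(K)^l$ induced by $\X$, whereas you phrase it via $\Hom_{\bZ}\bigl(X(T)/\Lambda,K^{\times}\bigr)$; these are two presentations of the same argument, with yours spelling out a little more explicitly why multiplicative independence forces surjectivity.
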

\begin{proof}
Since $\frakX$ is a basis of $\bfchi(H)$ as a free abelian group, $\frakX$ is multiplicatively independent and $\cap_{\chi\in \frakX}\ker(\chi)=\cap_{\chi\in \bfchi(H)}\ker(\chi)$. By Lemma B.10 of \cite{feng1}, $\cap_{\chi\in \bfchi(H)}\ker(\chi)$, which is denoted by $H^t$ in \cite{feng1}, is generated by unipotent elements. This proves the necessary part. For the sufficient part, it suffices to show that $\frakX$ generates $\bfchi(H)$. Denote $\bar{H}=H/\cap_{\chi\in \frakX}\ker(\chi)$. For each $\chi\in \bfchi(H)$, by Lemma B.10 of \cite{feng1}, any unipotent element of $H$ is contained in $\ker(\chi)$ and thus from the assumption $\cap_{\chi'\in\frakX} \ker(\chi')\subset \ker(\chi)$. This implies that $\cap_{\chi\in \bfchi(H)}\ker(\chi)=\cap_{\chi\in\frakX} \ker(\chi)$. Then one has that $\bfchi(H)\cong \bfchi(\bar{H})$ (see Exercise 12 on page 108 of \cite{humphreys}). Here the isomorphism sends $\chi$ to $\bar{\chi}$, where $\bar{\chi}: \bar{H}\rightarrow \bG_m(\overline{k(\bX)})$ is given by $\bar{\chi}(\bar{\bfc})=\chi(\bfc)$ for all $\bfc\in H$.
Since $\frakX$ is multiplicatively independent, so is $\{\bar{\chi} | \chi\in \frakX\}$. Thus $\{\bar{\chi} | \chi\in \frakX\}$ is a basis of $\bfchi(\bar{H})$ and then $\frakX$ is a basis of $\bfchi(H)$.
\end{proof}

\begin{proposition}
\label{PROP:character}
There is an open subset $U$ of $\bX$ satisfying that for any $\bfc\in U$, $G_\bfc$ is an algebraic group of $\dim(G)$ and $v_\bfc(\frakX)$ is a basis of $\bfchi(G_\bfc^\circ)$.
\end{proposition}
\begin{proof}
Let $p_\chi, \chi\in\frakX$ be distinct primes. By Lemma C on page 104 of \cite{humphreys}, there is $g\in G$ such that $\chi(g)=p_\chi$ for all $\chi\in\frakX$. By Lemma~\ref{LM:kolchinlemma} and an argument similar to that in the proof of Lemma~\ref{LM:unipotents}, we may assume that the entries of $g$ are in $k[\bX]$.  Applying Proposition~\ref{PROP:groupsfibre} to $G$ and $G^\circ$ respectively, one gets a nonempty open subset $U_1$ of $\bX$ such that for any $\bfc\in U_1$, $G_\bfc$ is an algebraic group of dimension $\dim(G)$ and $(G^\circ)_\bfc$ is a connected algebraic group of dimension $\dim(G^\circ)$. By an argument similar to that in Remark~\ref{RM:corollaries}, we may assume that $(G^\circ)_\bfc\subset G_\bfc$ for all $\bfc\in U_1$. Then the dimension argument implies that $(G^\circ)_\bfc=G_\bfc^\circ$ for all $\bfc\in U_1$. We shall prove that $v_\bfc(\frakX)$ is a basis of $\bfchi((G^\circ)_\bfc)$ for all $\bfc$ being in some nonempty open subset of $\bX$. By Remark~\ref{RM:corollaries}, there is a nonempty open subset $U_2$ of $\bX$ such that for any $\bfc\in U_2$, $v_\bfc(\frakX)\subset \bfchi((G^\circ)_\bfc)$. Set $H=\cap_{\chi\in\frakX} \ker(\chi)$. Then $H$ is defined over $k[\bX]$ and by Lemma B.10 of \cite{feng1}, $H$ is generated by unipotent elements. Let $U_3$ be a nonempty open subset of $\bX$ such that $H_\bfc$ is an algebraic group generated by unipotent elements and $H_\bfc=\cap_{\chi\in \frakX}\ker(v_\bfc(\chi))$. Such $U_3$ exists due to Lemma~\ref{LM:unipotents} and Remark~\ref{RM:corollaries}. Now set $U=U_1\cap U_2\cap U_3\cap \bX_f$ with $f=\det(g)$. Assume that $\bfc\in U$. We claim that $v_\bfc(\frakX)$ is a basis of $\bfchi((G^\circ)_\bfc)$. Since the $p_\chi$ are distinct primes, for any integers $\mu_\chi, \chi\in \frakX$, not all zero,
$$
   \prod_{\chi\in \frakX} v_\bfc(\chi)(g(\bfc))^{\mu_\chi}=v_\bfc\left(\prod_{\chi\in\frakX} \chi(g)^{\mu_\chi}\right)=\prod_{\chi\in \frakX} p_\chi^{\mu_\chi}\neq 1.
$$
This implies that $v_\bfc(\frakX)$ is multiplicatively independent. Due to Lemma~\ref{LM:characters} and the fact that $\cap_{\chi\in \frakX}\ker(v_\bfc(\chi))$ is generated by unipotent elements, $v_\bfc(\frakX)$ is a basis of $\bfchi((G^\circ)_\bfc)$ and thus a basis of $\bfchi(G_\bfc^\circ)$.
\end{proof}

\section{Difference equations under specialization}
\label{SEC:differencequations}
Let $B\in \GL_n(k(\bX)(x))$ and $\sigma$ be the $k(\bX)$-automorphism of $k(\bX)(x)$ which sends $x$ to $x+1$. By setting $\sigma(X)=BX$, the automorphism $\sigma$ can be extended to an automorphism of $k(\bX)(x)[X, 1/\det(X)]$. As we shall deal with a family of automorphisms, to void confusion, the automorphism of $k(\bX)(x)[X, 1/\det(X)]$ induced by $\sigma(X)=BX$ will be denoted by $\sigma_B$. An ideal $I$ of $k(\bX)(x)[X,1/\det(X)]$ is called a $\sigma_B$-ideal if $\sigma_B(I)=I$.
Let $A$ be given as in (\ref{EQ:differenceeqn}). For convenience, we introduce the following notation.
\begin{notation}
\label{NT:well-defined}
Denote by $\bX_\frakh$ the set of $\bfc\in \bX$ satisfying that $A(\bfc)$ is well-defined and invertible. One easily see that $\bX_{\frakh}$ is open and nonempty.
\end{notation}

\begin{definition}
\label{DEF:nu-maximalspace}
Let $\nu$ be a positive integer and $I\subset \overline{k(\bX)}(x)[X, 1/\det(X)]$ a $\sigma_A$-ideal generated by some polynomials in $\overline{k(\bX)}(x)[X]_{\leq \nu}$. $I$ is said to be a $\nu$-maximal $\sigma_A$-ideal if it is not the whole ring and for any $\sigma_A$-ideal $J$ generated by some polynomials in $\overline{k(\bX)}(x)[X]_{\leq \nu}$ if $I\subset J$ then either $I=J$ or $J$ is the whole ring. Likewise we define $\nu$-maximal $\sigma_{A(\bfc)}$-ideals in $k(x)[X,1/\det(X)]$.
\end{definition}
Let $I_\nu$ be a $\nu$-maximal $\sigma_A$-ideal and let $\F$ be a fundamental matrix of ${\color{blue}\sigma_A}(Y)=AY$ over $\overline{k(\bX)}(x)$ satisfying that it is a zero of $I_\nu$. Then one has that
$$
    \left\langle\left\{p\in \overline{k(\bX)}(x)[X]_{\leq \nu} \,\,| \,\,p\left(\F\right)=0\right\}\right\rangle_{\overline{k(\bX)}(x)} \subset I_\nu
$$
where $\langle * \rangle_{\overline{k(\bX)}(x)}$ denotes the ideal in $\overline{k(\bX)}(x)[X,1/\det(X)]$ generated by $*$.
Proposition 3.5 of \cite{feng2} implies that the above two sets coincide. From this, one sees that if $J$ is another $\nu$-maximal $\sigma_A$-ideal, then there is $g\in \GL_n(\overline{k(\bX)})$ such that
$$
   J=\left\{\,\,p(Xg)\,\,|\,\, p\in I_\nu\,\,\right\}.
$$
Let $m$ be a nonnegative integer. Set
\begin{equation}
\label{EQ:partial}
\rI(m,I_\nu)=I_\nu\cap \overline{k(\bX)}[x]_{\leq m}[X]_{\leq \nu}
\end{equation}
where
$$
   \overline{k(\bX)}[x]_{\leq m}[X]_{\leq \nu}=\{p\in \overline{k(\bX)}[x,X] \,\,|\,\, \deg_x(p)\leq m, \deg_X(p)\leq \nu\}.
$$
As $I_\nu$ is finitely generated, there is an integer $\mu$ such that $\rI(\mu,I_\nu)$ generates $I_\nu$ as an ideal in $\overline{k(\bX)}(x)[X,1/\det(X)]$. We call such $\mu$ a coefficient bound of $I_\nu$. The discussion above implies that if $\mu$ is a coefficient bound of $I_\nu$ then it is a coefficient bound of any $\nu$-maximal $\sigma_A$-ideals. Hence the following definition is reasonable.
\begin{definition}
An integer $\mu$ is called a coefficient bound of $\nu$-maximal $\sigma_A$-ideals if for every $\nu$-maximal $\sigma_A$-ideal $I_\nu$, $\rI(\mu,I_\nu)$ generates $I_\nu$ as an ideal in $\overline{k(\bX)}(x)[X,1/\det(X)]$.
\end{definition}
\begin{remark}
Note that in \cite{feng2} we use the symbol $I_{\F,\nu}$ to denote the $\nu$-maximal $\sigma_A$-ideal $I_\nu$, where $\F$ is a fundamental matrix of (\ref{EQ:differenceeqn}).
\end{remark}

Let us sketch the main results of this section. First, we show that there is a coefficient bound of $I_\nu$, say $\mu$, satisfying that it is a coefficient bound of $\nu$-maximal $\sigma_{A(\bfc)}$-ideals for all $\bfc$ in some basic open subset of $\bX$ (see Lemma~\ref{LM:coefficientbound}). Second, under the hypothesis that $\rI(\mu, I_\nu)$ has an $\overline{k(\bX)}$-basis $B$ contained in $k[\bX][x,X]$, we prove that there is a basic open subset of $\bX$ such that for each $\bfc$ in this set, $v_\bfc(B)$ is a basis of $\rI(\mu, \tilde{I}_\nu)$ as a $k$-vector space for some $\nu$-maximal $\sigma_{A(\bfc)}$-ideal $\tilde{I}_\nu$. The choice of $\mu$ implies that $v_\bfc(B)$ generates $\tilde{I}_\nu$ (see Proposition~\ref{PROP:maximalideals}).

Before we go further, let us first introduce some notations. Let $X^{\bfd_1},\cdots,X^{\bfd_\ell}$ be all monomials in $X$ with degree not greater than $\nu$, where $\ell=\binom{n^2+\nu-1}{\nu}$. Then $\{X^{\bfd_1},\cdots,X^{\bfd_\ell}\}$ is a basis of $\overline{k(\bX)}(x)[X]_{\leq \nu}$ as a vector space over $\overline{k(\bX)}(x)$. Let $Y$ be an $n\times n$ matrix with indeterminate entries and for a matrix $M$, let $M^t$ denote its transpose.
\begin{notation}
\label{NT:sym}
Suppose that $F$ is an $n\times n$ matrix with entries in a $\overline{k(\bX)}(x)$-algebra $R$. Then the map  sending $X$ to $FX$ induces a map
\begin{align*}
   \Sym_\nu: \Mat_n(R) &\longrightarrow \Mat_\ell(R) \\
                F & \longrightarrow \Sym_\nu(F)
\end{align*}
where $\Sym_\nu(F)$ is defined to be the matrix satisfying that
$$
   (X^{\bfd_1},\cdots,X^{\bfd_\ell})^t|_{X=FY}=\Sym_\nu(F)(X^{\bfd_1},\cdots,X^{\bfd_\ell})^t|_{X=Y}.
$$
\end{notation}
Let $l$ be a positive integer not greater than $n$. Denote by $\I_{n,l}$ the set of all subsets of $\{1,2,\cdots,n\}$ containing exactly $l$ elements. We define an order $\prec$ on $\I_{n,l}$ as follows: for $\bfi,\bfj\in \I_{n,l}$, $\bfi\prec\bfj$ if they satisfy that (1) $\min \bfi <\min \bfj$
or (2) $\min \bfi =\min \bfj$ and $\bfi \setminus \{\min \bfi\}\prec \bfj \setminus \{\min \bfj\}$.
\begin{notation}
\label{NT:minor} We use $\Phi_{n,l}$ to denote the map defined as follows:
\begin{align*}
    \GL_n(\overline{k(\bX)}(x))&\longrightarrow \GL_{\binom{n}{l}}(\overline{k(\bX)}(x)) \\
        Z  &\longrightarrow \left(Z_{\bfi,\bfj}\right)_{\{1,2,\cdots,l\}\prec \bfi,\bfj \prec \{n-l+1,\cdots,n\}}
\end{align*}
where $Z_{\bfi,\bfj}$ denotes the $l\times l$ minor of $Z$ that corresponds to the rows with index in $\bfi$ and the columns with index in $\bfj$.
\end{notation}
\begin{remark}
\label{RM:NT}
\begin{itemize}
\item [$(1)$] By the definition, one sees that
  $$\Sym_\nu(F_1F_2)=\Sym_\nu(F_1)\Sym_\nu(F_2)$$
 and if $F$ is invertible then so is $\Sym_\nu(F)$.
\item [$(2)$] Write $F=(f_{i,j})$ with $f_{i,j}\in R$. Then the vector space spanned by the entries of $\Sym_\nu(F)$ coincides with the one spanned by $\prod_{i,j}f_{i,j}^{s_{i,j}}$ with $0\leq \sum_{i,j}s_{i,j}\leq \nu$. To see this, let $V$ denote the latter vector space. Obviously, all entries of $\Sym_\nu(F)$ are in $V$. On the other hand, by the definition of $\Sym_\nu$, one has that
$$
   (\cdots, \prod_{i,j}f_{i,j}^{s_{i,j}},\cdots)^t=(X^{\bfd_1},\cdots,X^{\bfd_\ell})^t|_{X=F}=\Sym_\nu(F)(X^{\bfd_1},\cdots,X^{\bfd_\ell})^t|_{X=I_n},
$$
which implies that each $\prod_{i,j}f_{i,j}^{s_{i,j}}$ is a $\CQ$-combination of the entries of $\Sym_\nu(F)$. Hence these vector spaces are equal to each other.
\item [$(3)$] One sees that $\Phi_{n,l}(I_n)=I_{\binom{n}{l}}$ and if $M$ is a permutation matrix then so is $\Phi_{n,l}(M)$. Furthermore, the Cauchy-Binet formula (see Proposition 2.1.2 on page 18 of \cite{dserre}) implies that $\Phi_{n,l}$ is actually a group homomorphism.
\end{itemize}
\end{remark}
\subsection{Coefficient bounds of $\nu$-maximal $\sigma_A$-ideals}
\label{SUBSEC:coefficientbounds}
In this subsection, we shall show that there is a coefficient bound $N$ of $\nu$-maximal $\sigma_A$-ideals and a basic open subset $U$ of $\bX$ such that $N$ is also a coefficient bound of $\nu$-maximal $\sigma_{A(\bfc)}$-ideals for all $\bfc\in U$. Such a coefficient bound can be derived from a degree bound of the certificates of hypergeometric solutions of a suitable linear difference equation.
\begin{definition}
\label{DEF:hypergeometric}
Let $R$ be a $\sigma$-extension ring of $\overline{k(\bX)}(x)$. $h\in R$ is said to be hypergeometric over $\overline{k(\bX)}(x)$ if $h$ is invertible in $R$ and $\sigma(h)h^{-1}\in \overline{k(\bX)}(x)$, which is called the certificate of $h$. A solution $\bfh$ of (\ref{EQ:differenceeqn}) is called a hypergeometric solution if $\bfh=\bfv h$ where $\bfv\in \overline{k(\bX)}(x)^n$ and $h$ is hypergeometric over $\overline{k(\bX)}(x)$.
\end{definition}

Let us recall the method developed in \cite{feng2} to compute a coefficient bound of a $\nu$-maximal $\sigma_A$-ideal $I_\nu$. Denote
$$
    S_\nu=I_\nu\cap \overline{k(\bX)}(x)[X]_{\leq \nu}.
$$
Then $S_\nu$ is a $\overline{k(\bX)}(x)$-vector space of finite dimension and it generates $I_\nu$. Suppose that $\{p_1,\cdots,p_l\}$ is a $\overline{k(\bX)}(x)$-basis of $S_\nu$. Let $X^{\bfd_1},\cdots,X^{\bfd_\ell}$ be as in Notation~\ref{NT:sym}. After an invertible linear transformation of $p_1,\cdots,p_l$ if necessary, we may assume that for each $i=1,\cdots,l$
\begin{equation}
\label{EQ:coefficients}
   p_i=X^{\bfd_i}+\sum_{j=l+1}^\ell c_{i,j}X^{\bfd_j}
\end{equation}
with $c_{i,j}\in \overline{k(\bX)}(x)$. For $f\in \overline{k(\bX)}(x)\setminus \{0\}$, $\deg(f)$ stands for the degree of $f$ which is defined to be the maximum of the degrees of its numerator and denominator. For convenience, set $\deg(0)=-\infty$. Then we have following claim.
\begin{claim}
\label{claim}
 $\ell m$ is a coefficient bound of $I_\nu$ if $m$ is not less than $\deg(c_{i,j})$ for all $i,j$.
\end{claim}
Clearing the denominators of $c_{i,j}$ in (\ref{EQ:coefficients}), we obtain $\tilde{p}_i\in \overline{k(\bX)}[x,X]$ with $\deg_X(\tilde{p}_i)\leq \nu$ and $\deg_x(\tilde{p}_i)\leq (\ell-l)m< \ell m$. In other words, $\tilde{p}_i\in \rI(\ell m,I_\nu)$ and $\{\tilde{p}_1,\cdots,\tilde{p}_l\}$ is a basis of $S_\nu$, where $\rI(\ell m,I_\nu)$ is defined as in (\ref{EQ:partial}).
Hence $\ell m$ is a coefficient bound of $I_\nu$. This proves our claim. So in order to obtain a coefficient bound of $I_\nu$, it suffices to compute a degree bound of $c_{i,j}$. In the following, we show that a degree bound of $c_{i,j}$ can be achieved via computing the certificates of hypergeometric solutions of certain linear difference equations.  We have that
$$
   \sigma_A ((X^{\bfd_1},\cdots,X^{\bfd_\ell})^t)=\Sym_\nu(A)(X^{\bfd_1},\cdots,X^{\bfd_\ell})^t
$$
where $\Sym_\nu$ is defined as in Notation~\ref{NT:sym}.
From (\ref{EQ:coefficients}), $\{p_1,\cdots,p_l, X^{\bfd_{l+1}},\cdots,X^{\bfd_\ell}\}$ is another $\overline{k(\bX)}(x)$-basis of $\overline{k(\bX)}(x)[X]_{\leq \nu}$, and moreover one has that
\begin{equation}
\label{EQ:transformation}
   (p_1,\cdots,p_l,X^{\bfd_{l+1},\cdots,X^{\bfd_\ell}})^t=\begin{pmatrix}
         I_l & C\\
         0 & I_{\ell-l}
   \end{pmatrix}(X^{\bfd_1},\cdots,X^{\bfd_\ell})^t
\end{equation}
where $C=(c_{i,j})_{1\leq i \leq l, l+1\leq j\leq \ell}$ with $c_{i,j}$ given in (\ref{EQ:coefficients}). Since $S_\nu$ is stable under the action of $\sigma_A$, one has that
$$
  \sigma_A((p_1,\cdots,p_l,X^{\bfd_{l+1}},\cdots,X^{\bfd_\ell})^t)=\begin{pmatrix}
         B_1 & 0\\
         B_2 & B_3\end{pmatrix}(p_1,\cdots,p_l,X^{\bfd_{l+1}},\cdots,X^{\bfd_\ell})^t
$$
where $B_1\in \GL_l(\overline{k(\bX)}(x)), B_3\in \GL_{\ell-l}(\overline{k(\bX)}(x))$ and $B_2$ is an $(\ell-l)\times l$ matrix with entries in $\overline{k(\bX)}(x)$.
Applying $\sigma_A$ to (\ref{EQ:transformation}) yields that
\begin{equation*}
 \begin{pmatrix}
         I_l &\sigma(C) \\
          0& I_{\ell-l}
   \end{pmatrix}\Sym_\nu(A)(X^{\bfd_1},\cdots,X^{\bfd_\ell})^t=\begin{pmatrix}
         B_1 & 0\\
         B_2 & B_3
   \end{pmatrix}\begin{pmatrix}
         I_l & C\\
         0 & I_{\ell-l}
   \end{pmatrix}(X^{\bfd_1},\cdots,X^{\bfd_\ell})^t.
\end{equation*}
As $X^{\bfd_1},\cdots,X^{\bfd_\ell}$ are linearly independent over $\overline{k(\bX)}(x)$, the above equality implies
\begin{equation}
\label{EQ:equality}
 \begin{pmatrix}
         I_l &\sigma(C) \\
          0& I_{\ell-l}
   \end{pmatrix}\Sym_\nu(A)=\begin{pmatrix}
         B_1 & 0\\
         B_2 & B_3
   \end{pmatrix}\begin{pmatrix}
         I_l & C\\
         0 & I_{\ell-l}
   \end{pmatrix}.
\end{equation}
Denote by $\bfs$ the first row of
$$
    \Phi_{\ell,l}\left(\begin{pmatrix}
         I_l & C\\
         0 & I_{\ell-l}
   \end{pmatrix}\right)
$$
where $\Phi_{\ell,l}$ is defined as in Notation~\ref{NT:minor}.
Applying $\Phi_{\ell,l}$ to (\ref{EQ:equality}), we obtain that
$$
    \begin{pmatrix}\sigma(\bfs)\\ * \end{pmatrix}\Phi_{\ell,l}\left(\Sym_\nu(A)\right)=\begin{pmatrix}
         \det(B_1) & 0\\
          * & *
   \end{pmatrix}\begin{pmatrix}
   \bfs \\ 0
   \end{pmatrix}
$$
which implies that
\begin{equation*}
\sigma(\bfs)\Phi_{\ell,l}\left(\Sym_\nu(A)\right)=\det(B_1)\bfs.
\end{equation*}
Let $h$ be the hypergeometric element in some $\sigma$-extension ring of $\overline{k(\bX)}(x)$ with $\det(B_1)$ as its certificate. Then $\bfs^t h$ is a hypergeometric solution of the following linear difference equation
\begin{equation}
\label{EQ:differenceeqn2}
 \sigma_{\Phi_{\ell,l}\left(\Sym_\nu(A)\right)^{-t}}(Y)=\Phi_{\ell,l}\left(\Sym_\nu(A)\right)^{-t}Y,
\end{equation}
where $*^{-t}$ denotes the transpose of the inverse of $*$. For each $\bfi\in \I_{\ell,l}$, denote by $s_{\bfi}$ the $l\times l$-minor of $(I_l, C)$ corresponding to the columns with index in $\bfi$. Then $\bfs=(s_\bfi)_{\bfi\in \I_{\ell,l}}$ and one can verify that
\begin{equation}
\label{EQ:coefficients2}
   s_\bfi=
   \begin{cases}
        1, &\bfi=\{1,2,\cdots,l\} \\
        (-1)^{l-j} c_{i,j}, & \bfi=\{1,2,\cdots,i-1,i+1,\cdots,l,j\}
 \end{cases}
\end{equation}
for all $i\in \{1,2,\cdots,l\}$ and all $j\in \{l+1,\cdots,\ell\}$. Therefore to compute a degree bound for $c_{i,j}$, we only need to compute a degree bound for entries of $\bfs$.

It is well-known that the equation (\ref{EQ:differenceeqn2}) is equivalent to a linear difference operator with coefficients in $\overline{k(\bX)}(x)$ (see Section 1 of \cite{birkhoff}). Precisely, there is a matrix $T\in \GL_\mu(\overline{k(\bX)}(x))$ such that $\sigma(T)\Phi_{\ell,l}(\Sym_\nu(A))^{-t}T^{-1}$ is of the form
$$
   \begin{pmatrix}
      0 & 1 & 0 & \cdots & 0   \\
      \vdots  & \ddots & \ddots & \ddots & \vdots   \\
        & &  & 1 &  0   \\
      0 & \cdots & \cdots & 0 & 1  \\
      -a_0& -a_1 & \cdots &\cdots & -a_{\mu-1}
   \end{pmatrix}
$$
where $\mu=\binom{\ell}{l}=|\I_{\ell,l}|$, the order of the matrix $\Phi_{\ell,l}(\Sym_\nu(A))$. In other words, under the transformation $T$, the equation (\ref{EQ:differenceeqn2}) is equivalent to
$$
   \L=\sigma^\mu+a_{\mu-1}\sigma^{\mu-1}+\cdots+a_0,
$$
and the solution $\bfs^t h$ of (\ref{EQ:differenceeqn2}) is transformed into
$$
   T\bfs^t h =\begin{pmatrix}
         1 \\
         \tilde{r}\\
         \vdots\\
         \prod_{i=0}^{\mu-2}\sigma^i(\tilde{r})
   \end{pmatrix}\tilde{h}.
$$
where $\tilde{h}$ is a hypergeometric solution of $\L(y)=0$ and $\tilde{r}$ is the certificate of $\tilde{h}$.
Denote by $\deg(T^{-1})$ the maximum of the degrees of entries of $T^{-1}$ and denote
$$
   \begin{pmatrix}
         w_{\bfi_1} \\
         w_{\bfi_2}\\
         \vdots\\
         w_{\bfi_\mu}
   \end{pmatrix}=T^{-1}\begin{pmatrix}
         1 \\
         \tilde{r}\\
         \vdots\\
         \prod_{i=0}^{\mu-2}\sigma^i(\tilde{r})
   \end{pmatrix}.
$$
Since $\deg(\prod_{i=0}^j\sigma^i(\tilde{r}))\leq (j+1)\deg(\tilde{r})$ for all $0\leq j \leq \mu-2$,
$$\deg(w_{\bfi_j})\leq \mu \deg(T^{-1})+\mu(\mu-1)\deg(\tilde{r})$$
for all $j=1,\cdots,\mu$. On the other hand, since $\bfs=(w_{\bfi_1},\cdots,w_{\bfi_\mu})h^{-1} \tilde{h}$, by (\ref{EQ:coefficients2}),
\begin{equation}
\label{EQ:inequality}
 \deg(c_{i,j})=\deg(s_{\bfj}/s_{\bfi_1})=\deg(w_{\bfj}/w_{\bfi_1})\leq 2 \mu \deg(T^{-1})+2\mu(\mu-1)\deg(\tilde{r})
\end{equation}
where $\bfi_1=\{1,2,\cdots,l\}, \bfj= \{1,\cdots,i-1,i+1,\cdots,l,j\}$. Therefore to bound the degree of $c_{i,j}$, it suffices to bound the degrees of the certificates of all hypergeometric solutions of $\L(y)=0$. For the latter purpose, we introduce the following definition.
\begin{definition}
\label{DEF:hyper-bound}
 A nonnegative integer $N$ is call a hyper-bound for $\L$ if the certificates of all hypergeometric solutions of $\L(y)=0$ are of degree $\leq N$.
\end{definition}

\begin{remark}
\begin{itemize}
\item [(1)]
In the above discussion, we need to priorly know how large the dimension of $S_\nu$ is. In the case when this dimension can not be determined priorly, we can compute hyper-bounds for linear difference operators corresponding to $\sigma_{\Phi_{\ell,l}(\Sym_\nu(A))^{-t}}(Y)=\Phi_{\ell,l}(\Sym_\nu(A))^{-t}Y$ with $l=1,2,\cdots,\ell$. Each hyper-bound gives a potential coefficient bound of $I_\nu$. The maximum of these potential coefficient bounds will be what we need.
\item [(2)] The method described above also works for linear difference equations with coefficients in $k(x)$. Particularly, let $\bfc\in \bX_\frakh$ where $\bX_\frakh$ is given in Notation~\ref{NT:well-defined}. We can find a coefficient bound for $\nu$-maximal $\sigma_{A(\bfc)}$-ideals from hyper-bounds for linear difference operators corresponding to $\sigma_{\Phi_{\ell,l}(\Sym_\nu(A(\bfc)))^{-t}}(Y)=\Phi_{\ell,l}(\Sym_\nu(A(\bfc)))^{-t}Y$ with $l=1,2\cdots,\ell$.
\end{itemize}
\end{remark}

In the rest of this subsection, we shall deal with hyper-bounds for a linear difference operator $\L$. After multiplying a polynomial in $\overline{k(\bX)}[x]$, we may assume that $\L$ has polynomial coefficients, i.e.
$$
     \L=a_n(x)\sigma^n+\cdots+a_1(x)\sigma+a_0(x)
$$
with $a_i(x)\in \overline{k(\bX)}[x]$ and $a_n(x)a_0(x)\neq 0$. Let us first investigate polynomial solutions. Set $\bar{\sigma}=x(\sigma-\id)$. Multiplying $\L$ with a suitable polynomial in $\bZ[x]$, one obtains a new operator of the form $\sum_{i=0}^n \bar{a}_i(x)\bar{\sigma}^i\in \overline{k(\bX)}[x][\bar{\sigma}]$. Denote
$$
    \rho=\max \{\deg(\bar{a}_0), \cdots, \deg(\bar{a}_n)\}.
$$
\begin{definition}
$\sum_{i=0}^n \coeff(\bar{a}_i,x,\rho)y^i$ is called the indicial polynomial of $\L$, denoted by $\Ind(\L)$, where $\coeff(\bar{a}_i,x,\rho)$ denotes the coefficient of $x^\rho$ in $\bar{a}_i$.
\end{definition}
\begin{remark}
\label{RM:polysols} Let $p(x)=c x^m+c_{m-1}x^{m-1}+\cdots+c_0$ be a polynomial of degree $m$. Then for each $i=0,\cdots,n$, one has that
$$
    \bar{\sigma}^i(p(x))=c m^i x^m +\mbox{terms of lower degree}.
$$
Furthermore,
$$
   \L(p(x))=c\left(\sum_{i=0}^n\coeff(\bar{a}_i,x,\rho)m^i\right)x^{\rho+m}+\mbox{terms of lower degree}.
$$
Therefore if $\L(p(x))=0$ then $m$ is an integer zero of $\Ind(\L)$.
\end{remark}
Assume that $\L\in k[\bX][x, \sigma]$. For $\bfc\in \bX$, $\L_\bfc$ denotes the operator obtained by applying $v_\bfc$ to the coefficients of $\L$.
\begin{lemma}
\label{LM:polysols}
Let $N=\max \,\, \Z(\Ind(\L))\cup \{0\}$. Then there is a basic open subset $U$ of $\bX$ such that polynomial solutions of $\L_\bfc(y)=0$ with $\bfc\in U$ are of degree not greater than $N$.
\end{lemma}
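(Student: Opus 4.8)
The plan is to compare the indicial polynomial of $\varphi(L)$ with the image $\varphi(\Ind(L))$ of the indicial polynomial of $L$, and then to invoke Lemma~\ref{LM:integers} to control the integer zeros after specialization. Granting both $\Ind(\varphi(L))=\varphi(\Ind(L))$ and $\Z(\varphi(\Ind(L)))=\Z(\Ind(L))$ on a suitable basic open set, Remark~\ref{RM:polysols} applied to $\varphi(L)$ forces every nonzero polynomial solution of $\varphi(L)$ to have degree in $\Z(\Ind(\varphi(L)))=\Z(\Ind(L))$, hence at most $N$, which is the assertion.

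The first step is to check that forming $\Ind$ commutes with $\varphi$. The passage from $L$ to its $\bar{\sigma}$-form is achieved by multiplying on the left by a fixed polynomial $d(x)\in\bZ[x]\setminus\{0\}$ that clears the denominators in the (coefficient-independent) change of basis $\sigma^i=\sum_k\tilde{c}_{ik}(x)\bar{\sigma}^k$ with $\tilde{c}_{ik}\in\CQ(x)$. Since $L\in D[x][\sigma]$, this yields $d(x)L=\sum_i\bar{a}_i(x)\bar{\sigma}^i$ with $\bar{a}_i\in D[x]$, so $\Ind(L)=\sum_i\coeff(\bar{a}_i,x,\rho)\,y^i\in D[y]$, where $\rho=\max_i\deg_x\bar{a}_i$. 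As $\varphi$ is a ring homomorphism on $D[x][\sigma]$ fixing $x$ and $\sigma$, the same $d(x)$ gives $d(x)\varphi(L)=\sum_i\varphi(\bar{a}_i)(x)\bar{\sigma}^i$, so the $\varphi(\bar{a}_i)$ are the $\bar{\sigma}$-coefficients of $\varphi(L)$.

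The second step handles the degree drop under $\varphi$. One always has $\max_i\deg_x\varphi(\bar{a}_i)\le\rho$, with equality precisely when $\varphi$ does not annihilate all of the coefficients $\coeff(\bar{a}_i,x,\rho)$, i.e. precisely when $\varphi(\Ind(L))\ne 0$; and in that case $\Ind(\varphi(L))=\varphi(\Ind(L))$ term by term. To force this, pick a nonzero coefficient $c_0\in D$ of the nonzero polynomial $\Ind(L)\in D[y]$ and work inside the principal open set $\B(D,c_0)$. In parallel, apply Lemma~\ref{LM:integers} to $\Ind(L)\in D[y]$ to obtain a finitely generated subgroup $\Gamma$ of $\bG_a(\Omega)$ with $\Z(\Ind(L))=\Z(\varphi(\Ind(L)))$ for all $\varphi\in\B(D,\Gamma)$. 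Then $U=\B(D,c_0)\cap\B(D,\Gamma)$, possibly intersected with a further principal open set so that $\varphi(L)$ retains nonzero leading and constant coefficients, is a basic open set on which both desired identities hold.

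I expect the main obstacle to be exactly this bookkeeping around the degree drop: one has to be certain that the $\bar{\sigma}$-normalization is genuinely defined over $D$ (so that $\Ind(L)\in D[y]$) and transported correctly by $\varphi$, and that restricting to $\B(D,c_0)$ really prevents $\Ind$ from degenerating -- a smaller $\rho$ would produce a different indicial polynomial whose integer zeros are no longer controlled by $\Z(\Ind(L))$. Once these points are nailed down, the conclusion follows by combining $\Ind(\varphi(L))=\varphi(\Ind(L))$, the identity $\Z(\varphi(\Ind(L)))=\Z(\Ind(L))$ from Lemma~\ref{LM:integers}, and Remark~\ref{RM:polysols}.
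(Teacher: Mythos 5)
Your proposal is correct and follows essentially the same route as the paper's proof: use Lemma~\ref{LM:integers} to preserve $\Z(\Ind(L))$ on some $\B(D,\Gamma)$, cut down by a principal open set to prevent the indicial polynomial from degenerating so that $\Ind(\varphi(L))=\varphi(\Ind(L))$, and conclude via Remark~\ref{RM:polysols}. The only cosmetic difference is that the paper imposes $\deg(\bar{a}_i)=\deg(\varphi(\bar{a}_i))$ for all $i$, while you require only that one fixed nonzero coefficient of $\Ind(L)$ survive $\varphi$; both conditions suffice, and your discussion of why the $\bar{\sigma}$-normalization is defined over $\bZ[x]$ (hence commutes with $\varphi$) just makes explicit a step the paper leaves implicit.
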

\begin{proof}
By Lemma~\ref{LM:integers}, there is a finitely generated subgroup $\Gamma$ of $\bG_a(\overline{k(\bX)})$ such that for any $\bfc\in \B(\bX,\Gamma)$,  $\Z(\Ind(\L))=\Z(v_\bfc(\Ind(\L)))$. Let $c$ be a nonzero element in $k[\bX]$ such that for every $\bfc\in \bX_c$, $\deg(\bar{a}_i)=\deg(v_\bfc(\bar{a}_i))$ for all $0\leq i \leq n$. Let $U=\B(\bX,\Gamma)\cap \bX_c$. Suppose that $\bfc\in U$. One has that $v_\bfc(\Ind(\L))=\Ind(\L_\bfc)$ and then
\begin{align*}
    \max \,\,\Z(\Ind(\L_\bfc))\cup \{0\} &=\max \,\,\Z(v_\bfc(\Ind(\L)))\cup \{0\}\\
   & =\max \,\,\Z(\Ind(\L))\cup \{0\}=N.
\end{align*}
By Remark~\ref{RM:polysols}, every polynomial solution of $\L_\bfc$ has degree not greater than $ N$.
\end{proof}
To investigate the behavior of the certificates of hypergeometric solutions under specialization, we need to recall the algorithm given in \cite{petkovsek} for finding hypergeometric solutions.
Denote
$$
   \calS_\L=\left\{\left.(p,q)\in \overline{k(\bX)}[x] \,\,\right| \,\,\mbox{$p,q$ are monic and $p|a_0(x), q|a_n(x-n+1)$}\right\}.
$$
\begin{algorithm}
\label{ALG:hyper}
Input: $\L(y):=\sum_{i=0}^n a_i(x)\sigma^i(y)$ with polynomial $a_i(x)$\\
Output: the certificate of a hypergeometric solution of $\L(y)=0$ if there exists; otherwise 0.
\begin{itemize}
\item [$(a)$]
For each $(p,q)\in \calS_\L$ do
\begin{itemize}
\item [$(1)$]
   $P_i(x):=a_i(x)\prod_{j=0}^{i-1}p(x+j)\prod_{j=i}^{n-1}q(x+j)$ for all $i=0,1,\cdots,n$;
\item [$(2)$]
   $m: =\max\{\deg(P_i(x))\}$ and  $\alpha_i:=\coeff(P_i(x),x, m)$ for all $0\leq i\leq n$;
\item [$(3)$] let $\calZ_{p,q}\subset \overline{k(\bX)}$ be the set of all nonzero solutions of
   $$f_{p,q}(y)=\sum_{i=0}^n \alpha_i y^i=0;$$
\item [$(4)$] for each $\beta\in \calZ_{p,q}$ do if the linear difference equation
     $$
        \L_{p,q,\beta}=\sum_{i=0}^n \beta^i P_i(x)\sigma^i =0
     $$
     has a nonzero polynomial solution $Q(x)$, then return
     $$\beta\frac{p(x)}{q(x)}\frac{Q(x+1)}{Q(x)}. $$
     Note that one can test if $\L_{p,q,\beta}(y)=0$ has a polynomial solution by Algorithm Poly in \cite{petkovsek}.
\end{itemize}
\item [$(b)$] Return 0.
\end{itemize}
\end{algorithm}
 Let $\calS_\L, \calZ_{p,q}, \L_{p,q,\beta}$ be as in Algorithm~\ref{ALG:hyper}. We set
$$
   N(\L)= \max \,\,\{0\}\cup\Z\left(\prod_{(p,q)\in \calS_\L, \beta\in \calZ_{p,q} }\Ind(\L_{p,q,\beta})\right)
         +\max\{\deg(a_n), \deg(a_0)\}.
$$
Remark that the above algorithm allows one to compute the certificates of all hypergeometric solutions of $\L$. The certificates in the output are of degree not greater than
{\small\begin{align*}
 \mathop{\max}\limits_{(p,q)\in \calS_\L,Q\in \P} \left\{\deg\left(\frac{p(x)}{q(x)}\frac{Q(x+1)}{Q(x)}\right)\right\}&\leq  \mathop{\max}\limits_{(p,q)\in \calS_\L,Q\in \P}\{\deg(p)+\deg(Q), \deg(q)+\deg(Q)\}\\
 &\leq \max\{\deg(a_0),\deg(a_n)\}+\max_{Q\in\P}\{\deg(Q)\},
\end{align*}
}where $\P$ is the set of all polynomial solutions of $\L_{p,q,\beta}(y)=0$ for all $(p,q)\in \calS_\L$ and $\beta\in \calZ_{p,q}$.
If $Q(x)$ is a polynomial solution of $\L_{p,q,\beta}(y)=0$ then $\deg(Q)$ is not greater than $\max \{0\}\cup \Z(\Ind(\L_{p,q,\beta}))$ by Remark~\ref{RM:polysols}.
Therefore by definition $N(\L)$ is a hyper-bound for $\L$. Moreover, we have the following result.
\begin{lemma}
\label{LM:hyper}
There is a basic open subset $U$ of $\bX$ such that for any $\bfc\in U$, $N(\L)$ is a hyper-bound for $\L_\bfc$.
\end{lemma}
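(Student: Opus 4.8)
Our strategy is to run the algorithm of \cite{petkovsek} (Algorithm~\ref{ALG:hyper}) simultaneously on $L$ and on $\varphi(L)$, and to show that, for $\varphi$ in a suitable basic open set, every object produced by the algorithm for $\varphi(L)$ is the $\varphi$-image of the corresponding object produced for $L$ — for the combinatorial data $\calS_L$ and $\calZ_{p,q}$, at least a subset of such images. Granting this, the correctness of Algorithm~\ref{ALG:hyper} forces the certificate of any hypergeometric solution of $\varphi(L)(y)=0$ to have the form $\varphi(\beta)\frac{\varphi(p)(x)}{\varphi(q)(x)}\frac{Q(x+1)}{Q(x)}$ with $(p,q)\in\calS_L$, $\beta\in\calZ_{p,q}$, and $Q$ a nonzero polynomial solution of $\varphi(L)_{\varphi(p),\varphi(q),\varphi(\beta)}=\varphi(L_{p,q,\beta})$; since $p$ (resp. $q$) is a monic divisor of $a_0(x)$ (resp. $a_n(x-n+1)$), its degree is at most $\deg(a_0)$ (resp. $\deg(a_n)$), so the degree of this certificate is at most $\max\{\deg(a_0),\deg(a_n)\}+\deg(Q)$, and Lemma~\ref{LM:polysols} applied to $L_{p,q,\beta}$ bounds $\deg(Q)$ by $\max\,\Z(\Ind(L_{p,q,\beta}))\cup\{0\}$. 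Adding the two bounds gives exactly $N(L)$.

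First I would enlarge $D$ to a finitely generated $k$-algebra $\tilde{D}\subset\Omega$ containing the roots, taken in $\Omega$ (which contains the algebraic closure of $F$), of $a_0(x)$ and of $a_n(x-n+1)$, the inverses $1/\lc(a_i)$ for $0\leq i\leq n$, and all nonzero roots of the finitely many polynomials $f_{p,q}$, $(p,q)\in\calS_L$. Over $\tilde{D}$ each of $a_0(x)$ and $a_n(x-n+1)$ is a unit times a product of monic linear polynomials in $\tilde{D}[x]$, so every monic divisor of either polynomial already lies in $\tilde{D}[x]$; consequently every pair in $\calS_L$, every polynomial $P_i$, every $f_{p,q}$, every element of $\calZ_{p,q}$, and every operator $L_{p,q,\beta}$ has its coefficients in $\tilde{D}$, in $\tilde{D}[x]$, or in $\tilde{D}[x][\sigma]$, respectively.

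Next I would impose the basic open condition $\B(\tilde{D},c)$, where $c\in\tilde{D}$ is the product of the $\lc(a_i)$ and of all nonzero roots of the $f_{p,q}$. For $\varphi\in\B(\tilde{D},c)$ the degrees $\deg(\varphi(a_i))=\deg(a_i)$ are preserved, so $\varphi(a_0),\varphi(a_n)\neq0$; and since a factorization of a product of linear polynomials can only degenerate under $\varphi$, every monic divisor of $\varphi(a_0)$, resp. of $\varphi(a_n(x-n+1))$, equals $\varphi(p)$, resp. $\varphi(q)$, for a monic divisor $p$ of $a_0$, resp. $q$ of $a_n(x-n+1)$, whence $\calS_{\varphi(L)}\subseteq\{(\varphi(p),\varphi(q)):(p,q)\in\calS_L\}$. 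Moreover, for $(p,q)\in\calS_L$, the run of step $(a)$ of Algorithm~\ref{ALG:hyper} on $\varphi(L)$ with the pair $(\varphi(p),\varphi(q))$ produces $\varphi(P_i)$ in place of $P_i$ (as $\varphi$ commutes with $x\mapsto x+j$), the same integer $m$ (since $\lc(P_i)=\lc(a_i)$ is not annihilated), $\varphi(f_{p,q})$ in place of $f_{p,q}$, hence a set of nonzero roots contained in $\{\varphi(\beta):\beta\in\calZ_{p,q}\}$, and therefore $\varphi(L)_{\varphi(p),\varphi(q),\varphi(\beta)}=\varphi(L_{p,q,\beta})$ for every $\beta\in\calZ_{p,q}$. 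For each of the finitely many triples $(p,q,\beta)$ with $(p,q)\in\calS_L$ and $\beta\in\calZ_{p,q}$, Lemma~\ref{LM:polysols} applied to $L_{p,q,\beta}\in\tilde{D}[x][\sigma]$ supplies a basic open subset $U_{p,q,\beta}$ of $\Hom_k(\tilde{D},\bar{k})$ on which polynomial solutions of $\varphi(L_{p,q,\beta})$ have degree at most $\max\,\Z(\Ind(L_{p,q,\beta}))\cup\{0\}$. Putting $\tilde{U}=\B(\tilde{D},c)\cap\bigcap_{(p,q),\beta}U_{p,q,\beta}$, which is again a basic open subset of $\Hom_k(\tilde{D},\bar{k})$, the estimate of the first paragraph shows that $N(L)$ is a hyper-bound of $\varphi(L)$ for every $\varphi\in\tilde{U}$. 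Finally, Lemma~\ref{LM:property} yields a basic open subset $U$ of $\Homk$ contained in $\{\varphi|_D:\varphi\in\tilde{U}\}$; since $L\in D[x][\sigma]$, for each $\psi\in U$ there is $\varphi\in\tilde{U}$ with $\psi(L)=\varphi(L)$, so $N(L)$ is a hyper-bound of $\psi(L)$, as required.

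I expect the crux to be the bookkeeping of the third step: verifying that applying $\varphi$ can only \emph{degenerate} the data manufactured by the algorithm — it may merge monic divisors, send a root of some $f_{p,q}$ to $0$, or lower the order of some $L_{p,q,\beta}$ — but can never create a genuinely new pair $(p',q')$ outside $\{(\varphi(p),\varphi(q))\}$, a new $\beta'$, or a polynomial solution $Q$ whose resulting certificate has degree exceeding $N(L)$. Once this monotonicity is in place, the remainder is the routine intersection of finitely many basic open sets together with Lemmas~\ref{LM:polysols} and~\ref{LM:property}.
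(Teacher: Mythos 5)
Your proof is correct and follows essentially the same route as the paper's: both run Petkov\v{s}ek's algorithm in parallel on $L$ and $\varphi(L)$, show that the data it manufactures ($\calS_L$, the $P_i$, $f_{p,q}$, $\calZ_{p,q}$, $L_{p,q,\beta}$) specializes compatibly for $\varphi$ in a suitable basic open set, and then combine Lemma~\ref{LM:polysols} applied to each $L_{p,q,\beta}$ with Lemma~\ref{LM:property}. The only cosmetic difference is that the paper imposes injectivity on the additive group generated by the roots, leading coefficients and the sets $\calZ_{p,q}$, obtaining equalities such as $\calS_{\varphi(L)}=\varphi(\calS_L)$, whereas you impose only the nonvanishing of a product and settle for containments, which indeed suffice for the upper bound.
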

\begin{proof}
 Let $\calS_\L, f_{p,q},\calZ_{p,q}, \L_{p,q,\beta}$ be as in Algorithm~\ref{ALG:hyper} and let
$$
    W=\{1,\lc(a_0(x)), \cdots, \lc(a_n(x))\}\bigcup\bV(a_n(x))\bigcup\bV(a_0(x))
     \bigcup\bigcup_{(p,q)\in \calS_\L} \calZ_{p,q}
$$
 where $\bV(a_i(x))$ denotes the set of roots of $a_i(x)=0$ in $\overline{k(\bX)}$.
Let $\tilde{D}\subset \overline{k(\bX)}$ be a finitely generated $k[\bX]$-algebra such that $W\subset \tilde{D}$ and $\bY$ the variety over $k$ associated to $\tilde{D}$. Let $\Gamma$ be the subgroup of $\bG_a(\overline{k(\bX)})$ generated by $W$. Suppose that $\bfc\in \B(\bY,\Gamma)$. It is easy to see that $\calS_{\L_\bfc}=v_\bfc(\calS_\L)$. Furthermore one sees that for each $(v_\bfc(p),v_\bfc(q))\in \calS_{\L_\bfc}$,
 $$
    f_{v_\bfc(p),v_\bfc(q)}=v_\bfc(f_{p,q}),\,\, \calZ_{v_\bfc(p),v_\bfc(q)}=v_\bfc(\calZ_{p,q}),
 $$
 and for each $\beta\in \calZ_{p,q}$, $\L_{v_\bfc(p),v_\bfc(q),v_\bfc(\beta)}=v_\bfc(\L_{p,q,\beta})$.
 This together with Algorithm~\ref{ALG:hyper} implies that all certificates of hypergeometric solutions of $\L_\bfc(y)=0$ are of the form
\begin{equation}
\label{EQ:certificate}
     v_\bfc(\beta)\frac{v_\bfc(p)}{v_\bfc(q)}\frac{\bar{Q}(x+1)}{\bar{Q}(x)}
\end{equation}
 where $(p,q)\in \calS_\L, \beta\in \calZ_{p,q}$ and $\bar{Q}(x)$ is a nonzero polynomial solution of the linear difference equation $\L_{v_\bfc(p),v_\bfc(q),v_\bfc(\beta)}(y)=0$. Now let $\tilde{U}_{p,q,\beta}$ be a basic open subset of $\bY$ such that for any $\bfc\in \tilde{U}_{p,q,\beta}$, nonzero polynomial solutions of $v_\bfc(\L_{p,q,\beta})(y)=0$ i.e. $\bar{Q}(x)$, are of degree not greater than
 $$
     \max \,\,\Z(\Ind(\L_{p,q,\beta}))\cup \{0\}.
 $$
 Such $\tilde{U}_{p,q,\beta}$ exists due to Lemma~\ref{LM:polysols}.
 Set
$$
   U=\B(\bY,\Gamma)\bigcap \bigcap_{(p,q)\in \calS_\L, \beta\in \calZ_{p,q}}\tilde{U}_{p,q,\beta}.
$$
Then for any $\bfc\in U$, the degrees of rational functions in (\ref{EQ:certificate}) are not greater than $N(\L)$ and so $N(\L)$ is a hyper-bound for $\L_\bfc$. The lemma then follows from Lemma~\ref{LM:property} and the fact that $\L_\bfc=\L_{p_{\bY/\bX}(\bfc)}$.
\end{proof}

\begin{lemma}
\label{LM:coefficientbound}
There are a coefficient bound $N$ of $\nu$-maximal $\sigma_A$-ideals and a basic open subset $U$ of $\bX$ such that $N$ is also a coefficient bound of $\nu$-maximal $\sigma_{A(\bfc)}$-ideals for all $\bfc\in U$.
\end{lemma}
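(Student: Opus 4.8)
The plan is to run the effective procedure of Subsection~\ref{SUBSEC:coefficientbounds} not only for $A$, but simultaneously for every specialization $\varphi(A)$, and to show that a single integer $N$, manufactured from hyper-bounds computed over $\Omega(x)$, bounds the coefficients of every $\nu$-maximal $\sigma_{\varphi(A)}$-ideal once $\varphi$ ranges over a suitable basic open subset of $\Homk$. Concretely, for each $l=1,\dots,\ell$ I would fix the matrix $M_l=\Phi_{\ell,l}(\Sym_\nu(A))^{-t}$ of size $\mu_l={\ell\choose l}$; since $A\in\GL_n(F(x))$ and $\Sym_\nu,\Phi_{\ell,l}$ are polynomial in the matrix entries (Remark~\ref{RM:NT}), its entries lie in $F(x)$. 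By the Birkhoff/cyclic-vector reduction recalled in Subsection~\ref{SUBSEC:coefficientbounds}, which can be performed over $F(x)$, choose $T_l\in\GL_{\mu_l}(F(x))$ so that $\sigma(T_l)M_lT_l^{-1}$ is in companion form, and let $L_l=\sigma^{\mu_l}+a_{\mu_l-1}\sigma^{\mu_l-1}+\cdots+a_0$ be the corresponding scalar operator; after multiplying by a polynomial in $x$ I may assume $L_l\in F[x][\sigma]$ with nonzero leading and trailing coefficients. Let $t_l$ bound the degrees in $x$ of the entries of $T_l$ and of $T_l^{-1}$. Guided by the estimate~(\ref{EQ:inequality}) and Claim~\ref{claim}, I set
\[
  N=\ell\cdot\max_{1\le l\le\ell}\bigl(2\mu_l t_l+2\mu_l(\mu_l-1)N(L_l)\bigr),
\]
where $N(L_l)$ is the hyper-bound of $L_l(y)=0$ defined in Subsection~\ref{SUBSEC:coefficientbounds}. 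By the discussion there, together with the remark following Definition~\ref{DEF:hyper-bound} (which removes the need to know $\dim S_\nu$ in advance), $N$ is a coefficient bound of $\nu$-maximal $\sigma_A$-ideals.

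Next I would assemble the basic open set. Pass to a finitely generated $D$-algebra $\tilde D\subset\Omega$ containing the finitely many coefficients of all the $L_l$, $T_l$ and $T_l^{-1}$. By Lemma~\ref{LM:hyper} applied to each $L_l$ over $\tilde D$, there is a basic open subset $V_l$ of $\Hom_k(\tilde D,\bar k)$ on which $N(L_l)$ is a hyper-bound of $\varphi(L_l)(y)=0$. In addition, choose a nonzero $c\in\tilde D$ such that every $\varphi\in\B(\tilde D,c)$ satisfies $\varphi(\frakh)\neq0$ (so $\varphi(A)$ is defined and invertible, by Notation~\ref{NT:well-defined}) and preserves the leading coefficients in $x$ of the numerators and denominators of all entries of $M_l$, $T_l$, $T_l^{-1}$ together with $\varphi(\det T_l)\neq0$. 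For $\varphi\in U'=\bigl(\bigcap_l V_l\bigr)\cap\B(\tilde D,c)$ one then has $\varphi(M_l)=\Phi_{\ell,l}(\Sym_\nu(\varphi(A)))^{-t}$, the matrix $\varphi(T_l)$ is invertible with every entry of $\varphi(T_l)^{-1}$ of degree at most $t_l$, and---because $\varphi$ commutes with $\sigma$, which acts only on $x$---$\varphi(T_l)$ carries $\sigma(Y)=\Phi_{\ell,l}(\Sym_\nu(\varphi(A)))^{-t}Y$ to the scalar operator $\varphi(L_l)$.

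Finally, for $\varphi\in U'$ I would rerun the argument of Subsection~\ref{SUBSEC:coefficientbounds} with $\varphi(A)$ in place of $A$: if $\tilde I_\nu$ is a $\nu$-maximal $\sigma_{\varphi(A)}$-ideal and $l$ is the dimension of $\tilde I_\nu\cap\bar k(x)[X]_{\leq\nu}$, then the entries of its coefficient matrix have degree at most $2\mu_l t_l+2\mu_l(\mu_l-1)N(L_l)$, since $N(L_l)$ is a hyper-bound of $\varphi(L_l)$ and $t_l$ still bounds the degrees of the entries of $\varphi(T_l)^{-1}$; by Claim~\ref{claim} applied over $\bar k(x)$ the resulting coefficient bound is at most $\ell$ times this quantity, hence at most $N$. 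Thus $N$ is a coefficient bound of $\nu$-maximal $\sigma_{\varphi(A)}$-ideals for every $\varphi\in U'$, and Lemma~\ref{LM:property} produces a basic open subset $U$ of $\Homk$ with $U\subset\{\varphi|_D:\varphi\in U'\}$, which finishes the proof. I expect the main obstacle to be precisely this uniform control of the cyclic-vector transformation $T_l$ under specialization---keeping $\varphi(T_l)$ invertible with an inverse of the same degree bound, so that the estimate~(\ref{EQ:inequality}) transfers unchanged; the remaining points are either routine bookkeeping with basic open sets or a direct appeal to Lemma~\ref{LM:hyper}.
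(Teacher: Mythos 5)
Your proposal is correct and follows essentially the same route as the paper: Birkhoff reduction to scalar operators $L_l$ via transformations $T_l$, the degree estimate~(\ref{EQ:inequality}) combined with Claim~\ref{claim}, Lemma~\ref{LM:hyper} to make the hyper-bounds $N(L_l)$ persist on a basic open set, a nonzero $c$ to keep $\varphi(T_l)$ invertible with $\deg\varphi(T_l^{-1})\le t_l$, and Lemma~\ref{LM:property} to descend to $\Homk$. The only cosmetic difference is that the paper uniformizes the bound with $\tilde\mu=\max_l\binom{\ell}{l}$ inside the maximum while you keep $\mu_l$ per index and factor out $\ell$, but both yield a valid coefficient bound.
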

\begin{proof}
The notations are as before.
 For each $l=1,2,\cdots,\ell$, by the method developed in Section 1 of \cite{birkhoff}, compute a matrix $T_l\in \GL_{\binom{\ell}{l}}(\overline{k(\bX)}(x))$ such that under the transformation $T_l$, $\sigma_{\Phi_{\ell,l}(\Sym_\nu(A))^{-t}}(Y)=\Phi_{\ell,l}(\Sym_\nu(A))^{-t}Y$ is equivalent to a linear difference operator $\L_l$. Let $N_l$ be a hyper-bound for $\L_l$. Set
 $$
    N=\ell \cdot\max_{1\leq l \leq \ell} \{ 2\tilde{\mu} \deg(T_l^{-1}) + 2\tilde{\mu}(\tilde{\mu}-1)N_l\},
$$
where $\tilde{\mu}=\max \{\binom{\ell}{l}| 1\leq l \leq \ell\}$.
Then by (\ref{EQ:inequality}) and Claim~\ref{claim}, $N$ is a coefficient bound of $\nu$-maximal $\sigma_A$-ideals. Let $\tilde{D}\subset \overline{k(\bX)}$ be a finitely generated $k[\bX]$-algebra such that the entries of $T_l, T_l^{-1}$ and $\Phi_{\ell,l}(\Sym_\nu(A))^{-t}$ are in the field of fractions of $\tilde{D}[x]$ for all $l=1,2,\cdots,\ell$, and let $\bY$ be the variety over $k$ associated to $\tilde{D}$. Take a nonzero $\tilde{h}\in \tilde{D}$ such that for any $\tilde{\bfc}\in \bY_{\tilde{h}}$ and all $l=1,2,\cdots,\ell$, $v_{\tilde{\bfc}}(T_l)$ and $A(\tilde{\bfc})$ are well-defined and invertible, and $\sigma_{\Phi_{\ell,l}(\Sym_\nu(A(\tilde{\bfc})))^{-t}}(Y)=\Phi_{\ell,l}(\Sym_\nu(A(\tilde{\bfc})))^{-t}Y$ is equivalent to the linear difference operator $v_{\tilde{\bfc}}(\L_l)$ under the transformation $v_{\tilde{\bfc}}(T_l)$. Due to Lemma~\ref{LM:hyper}, there is a basic open subset $U_1$ of $\bX$ such that $N_l$ is a hyper-bound for $v_{\bfc}(\L_l)$ for all $l=1,2,\cdots,\ell$ and all $\bfc\in U_1$. By Lemma~\ref{LM:kolchinlemma}, there is a nonempty open subset $U_2$ of $\bX$ such that $U_2\subset p_{\bY/\bX}(\bY_{\tilde{h}})$. Set $U=U_1\cap U_2$ and suppose that $\bfc\in U$. Let $\tilde{\bfc}$ be an element in $\bY_{\tilde{h}}\cap p_{\bY/\bX}^{-1}(\bfc)$. One sees that $\deg(v_{\tilde{\bfc}}(T_l^{-1}))\leq \deg(T_l^{-1})$, and by (\ref{EQ:inequality}) and Claim~\ref{claim} again,
$$
   \tilde{N}=\ell\cdot\max_{1\leq l \leq \ell} \{ 2\tilde{\mu} \deg(v_{\tilde{\bfc}}(T_l^{-1})) + 2 \tilde{\mu}(\tilde{\mu}-1)N_l\}
$$
is a coefficient bound of $\nu$-maximal $\sigma_{A(\tilde{\bfc})}$-ideals. The lemma then follows from  Lemma~\ref{LM:property} and the facts that $N\geq \tilde{N}$ and $A(\tilde{\bfc})=A(\bfc)$.
\end{proof}
\begin{remark}
The coefficient bound $N$ given in Lemma~\ref{LM:coefficientbound} only depends on the matrix $A$ and the given integer $\nu$.
\end{remark}

\subsection{$\nu$-Maximal $\sigma_A$-ideals under specialization}
\label{SUBSEC:numaximalideals}
Let $I_\nu$ be a $\nu$-maximal $\sigma_A$-ideal in $\overline{k(\bX)}(x)[X,1/\det(X)]$ and $\rI(m,I_\nu)$ as in (\ref{EQ:partial}). The aim of this subsection is to prove that $I_\nu$ is sent to a $\nu$-maximal $\sigma_{A(\bfc)}$-ideal by $v_\bfc$ for all $\bfc$ in some basic open subset of $\bX$. We shall first prove that for each $m\geq 0$ there exists a basic open subset $U$ of $\bX$ such that for any $\nu$-maximal $\sigma_{A(\bfc)}$-ideal $J_\bfc$ in $k[X,1/\det(X)]$ with $\bfc\in U$, the dimension of $\rI(m,J_\bfc)$ is equal to that of $\rI(m,I_\nu)$. To this end, we need the following definition.

\begin{definition}
\label{DEF:dimension}
The dimension of (\ref{EQ:differenceeqn}) is defined to be the dimension of the vector space over $\overline{k(\bX)}$ spanned by the entries of a fundamental matrix of (\ref{EQ:differenceeqn}), denoted by $\dim([A])$.
\end{definition}
Given a fundamental matrix $\F$ of (\ref{EQ:differenceeqn}), there is a linear difference operator $\L\in \overline{k(\bX)}(x)[\sigma]$ whose solution space is spanned by the entries of $\F$. Moreover, for such $\L$ one has that $\ord(\L)=\dim([A])$. Such $\L$ can be constructed as follows. Let $\bfv_j$ be the $j$-th column of $\F$. Then $\L$ is an operator of minimal order that annihilates all $\bfv_j$, i.e. all entries of $\bfv_j$ for all $j$. Note that as $\F$ has $n^2$ entries, by definition, $\dim([A])\leq n^2$ and thus $\ord(\L)\leq n^2$. For each $l=1,\cdots,n^2$, $\sigma^l(\bfv_j)=A_l \bfv_j$ for all $j=1,\cdots,n$, where $A_l=\sigma^{l-1}(A)\cdots \sigma(A)A$. Assume that $a_0,\cdots,a_s\in \overline{k(\bX)}(x)$ with $s\leq n^2$. Then $\sum_{l=0}^s a_l \sigma^l(\bfv_j)=0$ for all $j=1,\cdots,n$ if and only if $a_0\bfv_j+\sum_{l=1}^s a_l A_l\bfv_j=0$ for all $j=1,\cdots,n$. The later equalities are equivalent to
$$a_0I_n+\sum_{l=1}^s a_l A_l=0$$
because $\F=(\bfv_1,\cdots,\bfv_n)$ is invertible. Let $\{\bfe_1,\cdots,\bfe_n\}$ be the standard basis of $\overline{k(\bX)}(x)^n$. Set
\begin{equation}
\label{EQ:matrix}
   \M_A=\begin{pmatrix} \bfe_1^t & \bfe_2^t & \cdots & \bfe_n^t \\
                      A_1^{[1]} & A_1^{[2]} & \cdots & A_1^{[n]} \\
                      A_2^{[1]} & A_2^{[2]} & \cdots & A_2^{[n]} \\
                        \vdots &\vdots& & \vdots \\
                        A_{n^2}^{[1]} & A_{n^2}^{[2]} & \cdots & A_{n^2}^{[n]} \\
      \end{pmatrix}
\end{equation}
where $A_i^{[j]}$ denotes the $j$-th row of $A_i$.
Then $\M_A$ is a $(1+n^2)\times n^2$ matrix with entries in $\overline{k(\bX)}(x)$, and one sees that $a_0I_n+\sum_{l=1}^s a_l A_l=0$ if and only if $(a_0,\cdots,a_s,0,\cdots,0)$ is in the left kernel of $\M_A$.
Let $(b_0,\cdots,b_s, 0, \cdots, 0)$ be an element of the left kernel of $\M_A$ satisfying that $b_s\neq 0$ and $s$ is as small as possible. Then $\L$ can be chosen to be $\sum_{i=0}^s b_i \sigma^i$ and $s=\ord(\L)=\dim([A])$. The above construction indicates the following lemma.
\begin{lemma}
\label{LM:dim}
There is a nonempty open subset $U$ of $\bX$ such that if $\bfc\in U$ then $$\dim([A])=\dim([A(\bfc)]).$$
\end{lemma}
\begin{proof}
We first show that $\dim([A])=\rank(\M_A)$, where $\M_A$ is given as in (\ref{EQ:matrix}). Denote $r=\dim([A])$. If $\rank(\M_A)<r$.
then the first $r$ rows of $\M_A$ are linearly dependent over $k(\bX)(x)$. This implies that the left kernel of $\M_A$ contains a nonzero element of the form $(b_0,\cdots,b_{r-1},0,\cdots,0)$. The above construction then implies that $\dim([A])\leq r-1$, a contradiction. So $\rank(\M_A)\geq r$. On the other hand, assume that $\L=\sum_{i=0}^r b_i \sigma^i$. Without loss of generality, we may assume that $b_r=1$. Then since $\L$ annihilates all entries of $\bfv_j$, one has that 
\begin{equation}
\label{EQ:order}
 \sigma^r(\bfv_j)=-\sum_{i=0}^{r-1}b_i\sigma^i(\bfv_j),\forall\,j=1,\cdots,n
\end{equation}
where $\bfv_j$ is the $j$-th column of $\F$. Applying $\sigma$ to (\ref{EQ:order}) successively yields that for each $l=0,\cdots,n^2-r$,
$$
   A_{r+l}\bfv_j=\sigma^{r+l}(\bfv_j)=\sum_{i=0}^{r-1}c_{l,i}\sigma^i(\bfv_j)=\sum_{i=0}^{r-1}c_{l,i}A_i \bfv_j, \,\forall\,j=1,\cdots,n
$$
where $c_{l,i}\in k(\bX)(x)$. Hence $A_{r+l}-\sum_{i=0}^{r-1}c_{l,i}A_i=0$, because $\F=(\bfv_1,\cdots,\bfv_n)$ is invertible. Consequently, the $(r+l)$-th row of $\M_A$ is a linear combination of the first $r$ rows of $\M_A$. Hence $\rank(\M_A)\leq r$. This proves that $\rank(\M_A)=r$. Similarly, one has that
$$\dim([A(\bfc)])=\rank(v_\bfc(\M_A))=\rank(\M_{A(\bfc)})$$ for all $\bfc\in \bX_\frakh$, where $\bX_\frakh$ is given as in Notation~\ref{NT:well-defined}.

Now take a nonzero $g\in k[\bX]$ such that $\rank(\M_A)=\rank(v_\bfc(\M_A))$ for any $\bfc\in \bX_g$. Then for $\bfc\in \bX_g\cap \bX_\frakh$, one has that
$$
  \dim([A])=\rank(\M_A)=\rank(v_\bfc(\M_A))=\rank(\M_{A(\bfc)})=\dim([A(\bfc)]).
$$
\end{proof}

Now let us turn to the dimension of $\rI(m,I_\nu)$. Let $\F=(f_{i,j})$ be a fundamental matrix of $\sigma_A(Y)=AY$ such that
$$
 I_\nu=\left\langle \left\{\left.p\in \overline{k(\bX)}(x)[X]_{\leq \nu} \,\,\right|\,\, p(\F)=0\right\} \right\rangle_{\overline{k(\bX)}(x)}.
$$
By Remark~\ref{RM:NT}, the vector space spanned by the entries of $\Sym_\nu(\calF)$ is equal to the one spanned by all $\prod f_{i,j}^{s_{i,j}}$ with $0\leq \sum s_{i,j}\leq \nu$.
Set
$$
   \calL_m^{\nu}(A)= \diag\left(\Sym_\nu(A),\left(\frac{x+1}{x}\right)\Sym_\nu(A), \cdots, \left(\frac{x+1}{x}\right)^m\Sym_\nu(A)\right)
$$
and
$$
   \tilde{\calF}=\diag\left(\Sym_\nu(\F),x\Sym_\nu(\F), \cdots, x^m\Sym_\nu(\F)\right).
$$
Note that
$$\sigma(\Sym_\nu(\calF))=\Sym_\nu(\sigma(\calF))=\Sym_\nu(A\calF)=\Sym_\nu(A)\Sym_\nu(\calF).$$
We have that $\tilde{\calF}$ is a fundamental matrix of $\sigma_A(Y)=\calL^\nu_m(A)Y$, and the set of the entries of $\tilde{\calF}$ and the set of all $x^i\prod f_{i,j}^{s_{i,j}}$ with $0\leq i \leq m$ and $0\leq \sum s_{i,j}\leq \nu$ span the same vector space. Notice that $$\rI(m,I_\nu)=\left\{ p\in \overline{k(\bX)}[x]_{\leq m}[X]_{\leq \nu} \,\,|\,\, p(\F)=0\right\}.$$
This implies that
\begin{equation}
\label{EQ:eqn1}
   \dim(\rI(m,I_\nu))=(m+1)\binom{n^2+\nu-1}{\nu}-\dim([\calL^\nu_m(A)]).
\end{equation}

\begin{corollary}
\label{COR:generators}
Let $m$ be a positive integer and $I_\nu$ be a $\nu$-maximal $\sigma_A$-ideal. Suppose that $B$ is a $\overline{k(\bX)}$-basis of $\rI(m,I_\nu)$ and $B\subset k[\bX][x,X]$. Then there is a nonempty open subset $U$ of $\bX$ such that for any $\bfc\in U$, $v_\bfc(B)$ is a basis of $\rI(m, \tilde{I}_\bfc)$ where $\tilde{I}_\bfc$ is a $\nu$-maximal $\sigma_{A(\bfc)}$-ideal in $k(x)[X,1/\det(X)]$.
\end{corollary}
\begin{proof}
By Proposition 1.20 on page 15 of \cite{vanderPut-Singer}, $I_\nu$ has a zero $\xi$ in $\GL_n(\overline{k(\bX)}(x))$. Write $B=\{b_1,\cdots,b_l\}$. Since $B$ generates $I_\nu$ that is a $\sigma_A$-ideal, there is a matrix $M$ with entries in $\overline{k(\bX)}(x)[X,1/\det(X)]$ such that
$$
    \sigma_A((b_1,\cdots,b_l))=(b_1,\cdots,b_l)M.
$$
Let $\tilde{D}\subset \overline{k(\bX)}$ be a finitely generated $k[\bX]$-algebra such that the entries of $\xi$ and the coefficients of the entries of $M$ are all in the fraction field of $\tilde{D}[x]$ and let $\bY$ be the variety over $k$ associated to $\tilde{D}$.
There is a nonzero $g\in \tilde{D}$ such that for any $\tilde{\bfc}\in \bY_{g}$, $v_{\tilde{\bfc}}(\xi), v_{\tilde{\bfc}}(M)$ are well-defined and $v_{\tilde{\bfc}}(\xi)$ is invertible. Then
\begin{align*}
   \sigma_{A(\tilde{\bfc})}((v_{\tilde{\bfc}}(b_1),\cdots,v_{\tilde{\bfc}}(b_l)))&=v_{\tilde{\bfc}}\left(\sigma_A((b_1,\cdots,b_l))\right)=v_{\tilde{\bfc}}\left((b_1,\cdots,b_l)M\right)\\
   &=(v_{\tilde{\bfc}}(b_1),\cdots,v_{\tilde{\bfc}}(b_l))v_{\tilde{\bfc}}(M).
\end{align*}
Hence for any $\tilde{\bfc}\in \bY_g$, $\langle v_{\tilde{\bfc}}(B)\rangle_{k(x)}$ is a $\sigma_{A(\tilde{\bfc})}$-ideal. Furthermore, $v_{\tilde{\bfc}}(\xi)$ is a zero of this ideal in $\GL_n(k(x))$. This implies that for such $\tilde{\bfc}$, $1\notin \langle v_{\tilde{\bfc}}(B)\rangle_{k(x)}$ and then $ v_{\tilde{\bfc}}(B)$ is contained in some $\nu$-maximal $\sigma_{A(\tilde{\bfc})}$-ideal, say $\tilde{I}_{\tilde{\bfc}}$, because every polynomial in $v_{\tilde{\bfc}}(B)$ is of degree in $X$ not greater than $\nu$. Using the arguments similar to those after Lemma~\ref{LM:dim}, one has that
  \begin{equation}
     \label{EQ:eqn2}
        \dim(\rI(m,\tilde{I}_{\tilde{\bfc}}))=(m+1)\binom{n^2+\nu-1}{\nu}-\dim([\calL_m^\nu(A(\tilde{\bfc}))]).
  \end{equation}
Let $\tilde{U}$ be a nonempty open subset of $\bY$ satisfying that for any $\tilde{\bfc}\in \tilde{U}$,
  \begin{itemize}
    \item [$(1)$]
        $\dim([\calL^\nu_m(A)])=\dim([v_{\tilde{\bfc}}(\calL^\nu_m(A))])$ and $v_{\tilde{\bfc}}(\calL^\nu_m(A))=\calL^\nu_m(A(\tilde{\bfc}))$; and
    \item [$(2)$] $v_{\tilde{\bfc}}(B)$ is linearly independent over $k$ and $|B|=|v_{\tilde{\bfc}}(B)|$.
  \end{itemize}
Such $\tilde{U}$ exists due to Lemma~\ref{LM:dim}.
Combining equalities (\ref{EQ:eqn1}) and (\ref{EQ:eqn2}), one sees that for any $\tilde{\bfc}\in \bY_g\cap \tilde{U}$,
$$
   | v_{\tilde{\bfc}}(B)|=|B|=\dim(\rI(m,I_\nu))=\dim(\rI(m,\tilde{I}_{\tilde{\bfc}})),
$$
which implies that $ v_{\tilde{\bfc}}(B)$ is a basis of $\rI(m,\tilde{I}_{\tilde{\bfc}}).$ The corollary then follows from Lemma~\ref{LM:kolchinlemma} and the fact that $v_{\tilde{\bfc}}(B)=v_{p_{\bY/\bX}(\tilde{\bfc})}(B)$. 
\end{proof}

\begin{proposition}
\label{PROP:maximalideals}
Let $I_\nu$ be a $\nu$-maximal $\sigma_A$-ideal and $N$ the integer obtained in Lemma~\ref{LM:coefficientbound}. Suppose that $\rI(N,I_\nu)$ has a $\overline{k(\bX)}$-basis $B$ contained in $k[\bX][x, X]$. Then there is a basic open subset $U$ of $\bX$ such that for any $\bfc\in U$, $v_\bfc(B)$ is a $k$-basis of $\rI(N,\tilde{I}_\bfc)$ for some $\nu$-maximal $\sigma_{A(\bfc)}$-ideal $\tilde{I}_\bfc$ in $k(x)[X,1/\det(X)]$. Specially, $v_\bfc(B)$ generates $\tilde{I}_\bfc$.
\end{proposition}
\begin{proof}
By Lemma~\ref{LM:coefficientbound}, there is a basic open subset $U_1$ of $\bX$ such that $N$ is a coefficient bound for not only $\nu$-maximal $\sigma_A$-ideals but also $\nu$-maximal $\sigma_{A(\bfc)}$-ideals for all $\bfc\in U_1$. By Corollary \ref{COR:generators}, there is a nonempty open subset $U_2$ of $\bX$ such that for any $\bfc\in  U_2$, one has that $v_\bfc(B)$ is a basis of $\rI(N, \tilde{I}_\bfc)$ for some $\nu$-maximal $\sigma_{A(\bfc)}$-ideal $\tilde{I}_\bfc$.
Set $U=U_1\cap U_2$. The proposition then follows from the fact that $\rI(N,\tilde{I}_\bfc)$ generates $\tilde{I}_\bfc$.
 \end{proof}

\section{Difference Galois groups under specialization}
\label{SEC:galoisgroups}
The aim of this section is to prove Theorem~\ref{TH:main}. To begin,  let us recall some notations and basic concepts in difference Galois theory.  Let $\frakm$ be a maximal $\sigma_A$-ideal of $\overline{k(\bX)}(x)[X,1/\det(X)]$ and let
$$
   \R=\overline{k(\bX)}(x)[X,1/\det(X)]/\frakm.
$$
Then $\R$ is the Picard-Vessiot ring of $\overline{k(\bX)}(x)$ for (\ref{EQ:differenceeqn}). The Galois group $\G$ of (\ref{EQ:differenceeqn}) over $\overline{k(\bX)}(x)$ is defined to be the set of $\overline{k(\bX)}(x)$-automorphisms of $\R$ which commute with $\sigma_A$. Set $\bar{X}=X \mod \frakm$. Then $\bar{X}$ is a fundamental matrix of (\ref{EQ:differenceeqn}), which induces a group homomorphism from $\G$ to $\GL_n(\overline{k(\bX)})$ given by sending $\phi\in \G$ to $\bar{X}^{-1}\phi(\bar{X})$. The image of this homomorphism is an algebraic subgroup of $\GL_n(\overline{k(\bX)})$ and this image can be obtained by computing the stabilizer of $\frakm$. The stabilizer of an ideal $I$ in $\overline{k(\bX)}(x)[X,1/\det(X)]$, denoted by $\stab(I)$, is defined to be the set of elements $g\in \GL_n(\overline{k(\bX)})$ satisfying that $\{p(Xg)|p\in I\}=I$, which is an algebraic subgroup of $\GL_n(\overline{k(\bX)})$. It is well-known that the stabilizer of $\frakm$ is the image of $\G$ under the homomorphism induced by a fundamental matrix that is a zero of $\frakm$. Throughout this section, Galois groups always mean the stabilizers of maximal $\sigma_A$-ideals. The readers are referred to Chapter 1 of \cite{vanderPut-Singer} for more details on difference Galois theory.
\subsection{A criterion for difference Galois groups}
\label{SUBSEC:criterion}
Proto-Galois groups play an essential role in the computation of difference Galois groups as well as differential Galois groups. In this subsection, we shall give a necessary and sufficient condition  for a proto-Galois group to be a difference Galois group. One will see that the condition given below can be verified algorithmically. Let us first recall what proto-Galois groups are.
\begin{definition}
\label{DEF:proto-groups}
Let $G, H$ be two algebraic subgroups of $\GL_n(\overline{k(\bX)})$. $H$ is said to be a proto-group of $G$ if it satisfies the following condition
   $$
       H^t \leq G^\circ \leq G \leq H
   $$
where  $H^t$ denotes the algebraic subgroup of $H$ generated by unipotent elements.
   In the case when $G$ is the Galois group of $\sigma_A(Y)=AY$ over $\overline{k(\bX)}(x)$, $H$ is called a proto-Galois group of $\sigma_A(Y)=AY$ over $\overline{k(\bX)}(x)$.
\end{definition}
\begin{remark}
\label{RMK:protogroup}
  \begin{itemize}
  \item [$(1)$] Since $H^t$ is connected, $H^t\subset H^\circ$. So if $H$ is a proto-group of $G$, then $H^\circ$ is a proto-group of $G\cap H^\circ$.
  \item [$(2)$] Suppose that $H$ is a proto-group of $G$ and $g\in \GL_n(\overline{k(\bX)})$. Then $H$ is a proto-group of $gGg^{-1}$ if and only if $gGg^{-1}\subset H$. To see this, it suffices to prove the ``if" part. Note that if $h\in \GL_n(\overline{k(\bX)})$ is unipotent then so is $g h g^{-1}$. Thus $gH^t g^{-1}\subset H^t$, because $gH^tg^{-1}\subset gGg^{-1}\subset H$. As both $gH^tg^{-1}$ and $H^t$ are connected and have the same dimension, $gH^tg^{-1}=H^t$. This implies that
      $
         H^t=gH^t g^{-1}\subset gGg^{-1} \subset H.
      $
  \item [$(3)$] Suppose that $H$ is a proto-Galois group of $\sigma_A(Y)=AY$ over $\overline{k(\bX)}(x)$ and $A\in H(\overline{k(\bX)}(x))$. Let $\tilde{H}$ be an algebraic subgroup of $H$. We claim that if $\sigma(h^{-1})Ah\in \tilde{H}(\overline{k(\bX)}(x))$ for some $h\in \GL_n(\overline{k(\bX)}(x))$ then $H$ is a proto-group of $\tilde{H}$.  Let $G$ be the Galois group of $\sigma_A(Y)=AY$ over $\overline{k(\bX)}(x)$ satisfying that $H$ is a proto-group of $G$. Proposition 1.21 of \cite{vanderPut-Singer} implies that there is $g\in \GL_n(\overline{k(\bX)})$ such that $gGg^{-1}\subset \tilde{H}$. By $(2)$, $H$ is a proto-group of $gGg^{-1}$ and then it is a proto-group of $\tilde{H}$ by the definition. This proves the claim.
  \end{itemize}
\end{remark}

Let $H$ be an algebraic subgroup of $\GL_n(\overline{k(\bX)})$ such that $A\in H(\overline{k(\bX)}(x))$. It was proved in Proposition 1.21 of \cite{vanderPut-Singer} that $H$ is the Galois group of (\ref{EQ:differenceeqn}) over $\overline{k(\bX)}(x)$ if and only if for any $g\in H(\overline{k(\bX)}(x))$ and any proper algebraic subgroup $\tilde{H}$ of $H$ one has that $\sigma(g^{-1})A g\notin \tilde{H}(\overline{k(\bX)}(x))$. We shall refine this criterion when $H$ is a proto-Galois group of (\ref{EQ:differenceeqn}) over $\overline{k(\bX)}(x)$. As an analogue of finite algebraic extensions in differential case, we need to consider the power of $\sigma$. Let $i$ be a positive integer. Obviously, every $\sigma$-ring (resp. field) is also a $\sigma^i$-ring (resp. field) and an easy calculation yields that $\sigma_A^i(X)=A_iX$, where $A_i$ stands for $\sigma^{i-1}(A)\cdots \sigma(A)A$.
\begin{definition}
Let $s\geq 0$. The rational functions $a_1,\cdots,a_m\in \overline{k(\bX)}(x)\setminus \{0\}$ are said to be multiplicatively $\sigma^s$-independent if for any $d_i\in \bZ$ and any $f\in \overline{k(\bX)}(x)\setminus \{0\}$, $\prod_{i=1}^m a_i^{d_i}=\sigma^s(f)/f$ implies that $d_1=\cdots=d_m=0$.
\end{definition}
\begin{lemma}
\label{LM:connectedgroup}
Let $H$ be a connected algebraic subgroup of $\GL_n(\overline{k(\bX)})$ and $B \in H(\overline{k(\bX)}(x))$. Suppose that $H$ is a proto-Galois group of $\sigma_B(Y)=B Y$ over $\overline{k(\bX)}(x)$. Then $H$ is the Galois group of $\sigma_B(Y)=BY$ over $\overline{k(\bX)}(x)$ if and only if
$\{\chi(B)|\chi\in \frakX\}$ is multiplicatively $\sigma$-independent, where $\frakX$ is a basis of $\bfchi(H)$.
\end{lemma}
\begin{proof}
Suppose that $H$ is the Galois group and there are integers $d_\chi, \chi\in \frakX$, not all zero, such that
$$\prod_{\chi\in \frakX} \chi^{d_\chi}(B)=\frac{\sigma(f)}{f}$$
for some $f\in \overline{k(\bX)}(x)\setminus \{0\}$. Set $\chi=\prod_{\chi\in\frakX} \chi^{d_\chi}$. Then $\chi$ is a nontrivial character. Let $I$ be the ideal in $\overline{k(\bX)}(x)[X,1/\det(X)]$ generated by all vanishing polynomials of $H$. Since $B \in H(\overline{k(\bX)}(x))$ and $H$ is the Galois group, $I$ is a maximal $\sigma_B$-ideal (see Lemma 1.10 and its proof on page 8 of \cite{vanderPut-Singer}). Furthermore as $H$ is connected, $I$ is a prime ideal. Let $\bar{X}= X \mod I$ and $E=\overline{k(\bX)}(x)(\bar{X})$. Then $\bar{X}$ is a fundamental matrix of $\sigma_B(Y)=B Y$ and it belongs to $H(E)$. An easy calculation yields that $\sigma_B(\chi(\bar{X}))/\chi(\bar{X})=\sigma(f)/f$ and then
  $\sigma_B\left(\chi(\bar{X})f^{-1}\right)=\chi(\bar{X})f^{-1}.$
In other words, $\chi(\bar{X})f^{-1}$ is a constant of $E$.
Since $E$ is the total Picard-Vessiot ring of $\sigma_B(Y)=BY$ and $\overline{k(\bX)}$ is algebraically closed, the field of constants of $E$ is equal to $\overline{k(\bX)}$. Hence
$\chi(\bar{X})=cf$ for some $c\in \overline{k(\bX)}$.
This implies that $\chi(X)-cf\in I$. As elements of $H(\overline{k(\bX)}(x))$ are zeroes of $I$, putting $X=I_n$ in $\chi(X)-cf$ yields that $cf=1$, and then putting $X=B $ in $\chi(X)-1$ yields that $\chi(B)=1$, i.e. $B\in \ker(\chi)$. Proposition 1.21 on page 15 of \cite{vanderPut-Singer} implies that $\ker(\chi)$ contains $H$ as a subgroup. Hence $\ker(\chi)=H$, i.e. $\chi$ is trivial. This contradicts the fact that $\chi$ is nontrivial.

Conversely, suppose that $H$ is not the Galois group. Due to Proposition 1.21 of \cite{vanderPut-Singer} again, there is $g\in H(\overline{k(\bX)}(x))$ and a proper algebraic subgroup $\tilde{H}$ of $H$ such that $\sigma(g^{-1})B g\in \tilde{H}(\overline{k(\bX)}(x))$. By Remark~\ref{RMK:protogroup}, $H$ is a proto-group of $\tilde{H}$. By Proposition 2.6 of \cite{feng2}, $\tilde{H}\subset \ker(\chi)$ for some nontrivial character $\chi$ of $H$. This implies that $\chi(\sigma (g^{-1})B g)=1$, i.e. $\chi(B)=\sigma(\chi(g))/\chi(g)$. Consequently, $\chi(B), \chi\in \frakX$ are multiplicatively $\sigma$-dependent.
\end{proof}
Remark that the above lemma still holds if we replace $\sigma, B$ and $\sigma_B$ with $\sigma^s, B_s$ and $\sigma_B^s$ respectively for some positive integer $s$.
Now let us consider the general case.
\begin{proposition}
\label{PROP:criterion}
 Let $H$ be an algebraic subgroup of $\GL_n(\overline{k(\bX)})$ such that $A\in H(\overline{k(\bX)}(x))$. Suppose that $H$ is a proto-Galois group of $\sigma_A(Y)=AY$ over $\overline{k(\bX)}(x)$. Then $H$ is the Galois group of $\sigma_A(Y)=AY$ over $\overline{k(\bX)}(x)$ if and only if
\begin{itemize}
  \item [$(a)$] $A_m \notin H^\circ(\overline{k(\bX)}(x))$ for all positive $m$ with $m |\ell$ and $m\neq \ell$, and
  \item  [$(b)$]
$\{
   \chi\left(A_\ell\right) | \chi\in\frakX
\}$
is multiplicatively $\sigma^\ell$-independent,
\end{itemize}
where $\ell=[H:H^\circ]$ and $\frakX$ is a basis of $\chi(H^\circ)$.
\end{proposition}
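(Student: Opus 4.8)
The plan is to reduce the statement to Lemma~\ref{LM:connectedgroup}, applied in the $\sigma^\ell$-form noted just above (to $\sigma^\ell(Y)=A_\ell Y$ over $(\Omega(x),\sigma^\ell)$), with condition (a) taking care of the component group $H/H^\circ$. Write $\ell=[H:H^\circ]$. Since $A\in H(\Omega(x))$ and $\sigma$ moves no component of $H$ (the components of $H$ being defined over $\Omega$), the image $\bar a$ of $A$ in the finite group $H/H^\circ$ is well defined and the image of $A_i$ is $\bar a^i$; hence $A_i\in H^\circ(\Omega(x))$ exactly when the order of $\bar a$ divides $i$. Thus (a) holds if and only if $\bar a$ has order $\ell$, equivalently $H/H^\circ=\langle\bar a\rangle$ is cyclic and $A_\ell\in H^\circ(\Omega(x))$ (so that (b) is meaningful). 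I will use the facts, standard in difference Galois theory (see \cite{vanderPut-Singer}, and \cite{feng2}), that for $\sigma(Y)=CY$ over $\Omega(x)$ with Galois group $N$ and $C\in N(\Omega(x))$ one has $N/N^\circ$ cyclic, $C_e\in N^\circ(\Omega(x))$ if and only if $[N:N^\circ]\mid e$, and $N^\circ$ is (a representative of) the Galois group of $\sigma^{[N:N^\circ]}(Y)=C_{[N:N^\circ]}Y$ over $(\Omega(x),\sigma^{[N:N^\circ]})$.

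Assume first $H$ is the Galois group $G$ of $\sigma(Y)=AY$. By the quoted facts $[G:G^\circ]=\ell$, $A_i\notin G^\circ(\Omega(x))$ for $1\le i\le\ell-1$ (which is (a)), $A_\ell\in H^\circ(\Omega(x))$, and $H^\circ=G^\circ$ is the Galois group of $\sigma^\ell(Y)=A_\ell Y$ over $(\Omega(x),\sigma^\ell)$. Applying the $\sigma^\ell$-form of Lemma~\ref{LM:connectedgroup} to the connected group $H^\circ$ (trivially a proto-Galois group of $\sigma^\ell(Y)=A_\ell Y$, being its Galois group) with the basis $\{\chi_1,\dots,\chi_l\}$ of $X(H^\circ)$, the fact that $H^\circ$ is the $\sigma^\ell$-Galois group gives that $\chi_1(A_\ell),\dots,\chi_l(A_\ell)$ are multiplicatively $\sigma^\ell$-independent, i.e. (b).

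Conversely, assume (a) and (b); by the first paragraph $H/H^\circ=\langle\bar a\rangle$ is cyclic of order $\ell$ and $A_\ell\in H^\circ(\Omega(x))$. Let $G$ be the actual Galois group of $\sigma(Y)=AY$, so $H^t\le G^\circ\le G\le H$, and let $G'$ be the Galois group of $\sigma^\ell(Y)=A_\ell Y$ over $(\Omega(x),\sigma^\ell)$; since $A_\ell\in H^\circ(\Omega(x))$ we may take $G'\le H^\circ$. Because $G^\circ\le H^\circ$ and $A\in G(\Omega(x))$, the equivalence ``$A_e\in G^\circ(\Omega(x))\iff[G:G^\circ]\mid e$'' together with (a) forces $\ell\mid[G:G^\circ]$, and then the Picard--Vessiot ring decomposition $R=\bigoplus_j e_jR$ exhibits $G^\circ$ as a subgroup of finite index in $G'$, so $G^\circ\le(G')^\circ$. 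As every unipotent element of $H$ lies in $H^\circ$, we get $(H^\circ)^t=H^t\le G^\circ\le(G')^\circ\le G'\le H^\circ$, so $H^\circ$ is a proto-Galois group of $\sigma^\ell(Y)=A_\ell Y$. Now the $\sigma^\ell$-form of Lemma~\ref{LM:connectedgroup} together with (b) shows that $H^\circ$ \emph{is} the Galois group of $\sigma^\ell(Y)=A_\ell Y$ over $(\Omega(x),\sigma^\ell)$. Suppose now, for contradiction, that $H$ is not the Galois group of $\sigma(Y)=AY$. By Proposition~1.21 of \cite{vanderPut-Singer} there are $g\in H(\Omega(x))$ and a proper algebraic subgroup $\tilde H\subsetneq H$ with $B:=\sigma(g^{-1})Ag\in\tilde H(\Omega(x))$; by Remark~\ref{RMK:protogroup}(3), $H$ is a proto-group of $\tilde H$. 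Since $H/H^\circ$ is abelian and $\sigma$ moves no component, the image of $B$ in $H/H^\circ$ is $\bar g^{-1}\bar a\,\bar g=\bar a$, which generates $H/H^\circ$; hence $\tilde HH^\circ=H$ and $L:=\tilde H\cap H^\circ$ is a proper algebraic subgroup of $H^\circ$. Iterating, $B_\ell=\sigma^\ell(g^{-1})A_\ell g\in\tilde H(\Omega(x))$ has trivial image in $H/H^\circ$, hence $B_\ell\in L(\Omega(x))$. Write $g=g_0g_1$ with $g_0\in H(\Omega)$ constant and $g_1=g_0^{-1}g\in H^\circ(\Omega(x))$, and set $A_\ell'=g_0^{-1}A_\ell g_0\in H^\circ(\Omega(x))$ (using $H^\circ\trianglelefteq H$). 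The constant gauge $g_0$ makes $\sigma^\ell(Y)=A_\ell'Y$ equivalent to $\sigma^\ell(Y)=A_\ell Y$ over $(\Omega(x),\sigma^\ell)$, so its Galois group is again $H^\circ$, whereas $\sigma^\ell(g_1^{-1})A_\ell'g_1=B_\ell\in L(\Omega(x))$ with $g_1\in H^\circ(\Omega(x))$ and $L\subsetneq H^\circ$ proper; this contradicts Proposition~1.21 of \cite{vanderPut-Singer} applied to $\sigma^\ell(Y)=A_\ell'Y$. Hence $H$ is the Galois group.

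The main obstacle is the structure-theoretic input of the first paragraph — that $[G:G^\circ]$ is the least $e$ with $A_e\in G^\circ(\Omega(x))$ and that $G^\circ$ reappears as (the identity component of) the Galois group of the $\ell$-th iterate — and, with that in hand, keeping the interplay of $H$, $H^\circ$, $\tilde H$, $G$, $G^\circ$, $G'$ and their component groups consistent; in particular the splitting $g=g_0g_1$ is exactly what allows Proposition~1.21 of \cite{vanderPut-Singer} to be invoked inside the connected group $H^\circ$. Everything else is a direct application of Lemma~\ref{LM:connectedgroup} and the criterion of \cite{vanderPut-Singer}.
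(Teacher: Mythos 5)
Your proof is correct in its forward direction, which matches the paper's, but your backward direction takes a genuinely different route from the paper's, and that route has gaps that need to be filled.

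The paper's backward direction never introduces the $\sigma^\ell$-Galois group $G'$ at all. Instead it establishes the claim ``if $\sigma(g^{-1})Ag\in\tilde H(\Omega(x))$ with $g\in H(\Omega(x))$, then $\tilde H^\circ=H^\circ$'' by producing (via Proposition 2.6 of \cite{feng2}) a nontrivial character $\chi$ of $H^\circ$ vanishing on $\tilde H\cap H^\circ$, then conjugating it by the constant part $\xi$ of $g=h\xi$ to get a relation $\tilde\chi(A_\ell)=\sigma^\ell(\tilde\chi(h))/\tilde\chi(h)$ that contradicts (b). It then finishes exactly as you do, showing $A_m\in H^\circ(\Omega(x))$ for $m=[\tilde H:\tilde H^\circ]<\ell$, contradicting (a). Your route, by contrast, first promotes $H^\circ$ to \emph{the} $\sigma^\ell$-Galois group of $A_\ell$ via Lemma~\ref{LM:connectedgroup}, and only then derives the contradiction. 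That is a legitimate strategy and your final gauge-splitting $g=g_0g_1$ is essentially the same trick the paper uses with $g=h\xi$.

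However, the step ``$(H^\circ)^t=H^t\le G^\circ\le(G')^\circ\le G'\le H^\circ$'' is the crux of your argument and it is not adequately justified. First, you invoke ``$A_e\in G^\circ(\Omega(x))\iff[G:G^\circ]\mid e$'', which presupposes $A\in G(\Omega(x))$; but the hypothesis only gives $A\in H(\Omega(x))$, and in general $A\notin G(\Omega(x))$ (Proposition 1.21 of \cite{vanderPut-Singer} only says $\sigma(g^{-1})Ag\in G(\Omega(x))$ for some $g$). One could gauge-transform to reduce to $A\in G(\Omega(x))$, but one must then check that the gauge $g\in H(\Omega(x))$ preserves both (a) and (b); this is not trivial (for (b) one needs to conjugate the character basis by the constant part of $g$ and observe that this is a unimodular change of basis) and you do not do it. Second, even granting $\ell\mid[G:G^\circ]$, the assertion that the PV-ring decomposition ``exhibits $G^\circ$ as a subgroup of finite index in $G'$'' requires that the representatives $G^\circ$ and $G'$ be embedded in $\GL_n(\Omega)$ compatibly — concretely, that $G'$ be realized as $\{\,g\in G: g \text{ stabilizes the }\sigma^\ell\text{-orbit of idempotents}\,\}$ with the matrix map inherited from the same fundamental matrix $\bar X$. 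But you have also taken the liberty ``since $A_\ell\in H^\circ(\Omega(x))$ we may take $G'\le H^\circ$'', which a priori invokes a \emph{different} (conjugated) representative of $G'$; you never reconcile the two choices, and with the first one the inclusion $G'\le H^\circ$ requires a separate argument (tracking the $H$-component of $e^{(0)}\bar X$). These are not trivial book-keeping issues; indeed the paper's proof is structured precisely to avoid having to analyze the $\sigma$-to-$\sigma^\ell$ Galois correspondence in the backward direction, where the clean statement ``$H^\circ$ is the $\sigma^\ell$-Galois group'' is only available \emph{after} the proposition is proved.

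In short: your forward direction matches the paper's (both cite Lemma 1.26/Corollary 1.17 of \cite{vanderPut-Singer} and then Lemma~\ref{LM:connectedgroup}); your backward direction is a genuinely different and salvageable strategy, but as written it silently assumes $A\in G(\Omega(x))$ and relies on an unverified compatibility of the embeddings of $G^\circ$, $G'$ and $H^\circ$ in $\GL_n(\Omega)$. You should either fill those gaps explicitly or follow the paper's more direct character-theoretic contradiction, which sidesteps the $\sigma^\ell$-Galois group entirely.
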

\begin{proof}
Assume that $H$ is the Galois group of $\sigma_A(Y)=AY$ over $\overline{k(\bX)}(x)$. 
Let $I$ be a maximal $\sigma_A$-ideal in $\overline{k(\bX)}(x)[X,1/\det(X)]$ such that $H=\stab(I)$, the stabilizer of $I$. For each positive integer $m$, note that $I$ is a proper $\sigma_A^m$-ideal in $\overline{k(\bX)}(x)[X,1/\det(X)]$, so there is a maximal $\sigma_A^m$-ideal, say $\tilde{I}_m$, containing $I$. By Lemma 4.1 of \cite{feng2}, $\tilde{I}_m\cap \sigma_A(\tilde{I}_m)\cap \dots\cap\sigma_A^{m-1}(\tilde{I}_m)$ is a maximal $\sigma_A$-ideal. It is clear that each $\sigma_A^i(\tilde{I}_m)$  contains $I$ as so does $\tilde{I}_m$. Thus 
$$I=\tilde{I}_m\cap \sigma_A(\tilde{I}_m)\cap \dots\cap\sigma_A^{m-1}(\tilde{I}_m)$$
because $I$ is a maximal $\sigma_A$-ideal. Denote $H_m=\stab(\tilde{I}_m)$. Then $H_m$ is the Galois group of $\sigma_A^m(Y)=A_mY$ over $\overline{k(\bX)}(x)$. Due to Lemma 4.1 of \cite{feng2} agian, $H_m$ is a subgroup of finite index in $H$ and furthermore $[H:H_m]\leq m$. This implies that $H_m$ contains $H^\circ$ by Proposition on page 53 of \cite{humphreys}.	
Now if $A_m\in H^\circ(\overline{k(\bX)}(x))$ for some positive $m$ with $m|\ell$ and $m\neq \ell$, then by Proposition 1.21 of \cite{vanderPut-Singer}, $H_m$ is a subgroup of $H^\circ$. This implies that $H_m=H^\circ$ and thus $[H:H_m]=\ell$, a contradiction with $[H:H_m]\leq m<\ell$. Therefore $A_m\notin H^\circ(\overline{k(\bX)}(x))$ for all positive $m$ with $m|\ell$ and $m\neq \ell$, i.e. $(a)$ holds. In addition, note that $\sigma^i(A)\in H(\overline{k(\bX)}(x))$ for all $i\geq 0$. From this, one sees that $A_\ell=\sigma^{\ell-1}(A)\cdots A\in H^\circ(\overline{k(\bX)}(x))$. By Proposition 1.21 of \cite{vanderPut-Singer} again, $H_\ell$ is a subgroup of $H^\circ$. However, $H_\ell$ contains $H^\circ$. This implies that  $H_\ell=H^\circ$ and $H^\circ$ is the Galois group of $\sigma_A^\ell(Y)=A_\ell Y$ over $\overline{k(\bX)}(x)$. Then Lemma~\ref{LM:connectedgroup} with $\sigma_B=\sigma_A^\ell$ implies $(b)$. This proves the necessary part.

It remains to show that $(a)$ and $(b)$ are sufficient. Suppose to the contrary that $H$ is not the Galois group under the assumption that $(a)$ and $(b)$ hold. By Proposition 1.21 on page 15 of \cite{vanderPut-Singer}, there are $g\in H(\overline{k(\bX)}(x))$ and a proper algebraic subgroup $\tilde{H}$ of $H$ such that $\sigma(g^{-1})Ag\in \tilde{H}(\overline{k(\bX)}(x))$. Write $g=h\xi$ with $h\in H^\circ(\overline{k(\bX)}(x))$ and $\xi\in H(\overline{k(\bX)})$. Then for $i>0$
\begin{equation}
\label{EQ:powers}
   \sigma^i(g^{-1})A_i g=\prod_{j=0}^{i-1}\sigma^{i-1-j}\left(\sigma(g^{-1})Ag\right)=\xi^{-1}\sigma^i(h^{-1})A_i h \xi.
\end{equation}
We claim that the condition $(b)$ implies that $\tilde{H}^\circ=H^\circ$. To see this, suppose that $H^\circ\neq \tilde{H}^\circ$. Setting $i=\ell$ in (\ref{EQ:powers}), one has that
$$\sigma^\ell(g^{-1})A_\ell g\in \tilde{H}(\overline{k(\bX)}(x))\cap H^\circ(\overline{k(\bX)}(x)).$$
Notice that $H$ is a proto-group of $\tilde{H}$ as shown in Remark~\ref{RMK:protogroup}. Thus $H^\circ$ is a proto-group of $\tilde{H}\cap H^\circ$. Furthermore, since $\tilde{H}^\circ\neq H^\circ$, $\tilde{H}\cap H^\circ$ is a proper subgroup of $H^\circ$. Due to Proposition 2.6 of \cite{feng2}, there is a nontrivial character $\chi\in \bfchi(H^\circ)$ such that $\tilde{H}\cap H^\circ\subset \ker(\chi)$, and so
$
   \chi\left( \xi^{-1}\sigma^\ell(h^{-1})A_\ell h \xi\right)=1.
$
Set $\tilde{\chi}=\chi(\xi^{-1}X\xi)$. Then $\tilde{\chi}$ is still a nontrivial character of $H^\circ$ and
$\tilde{\chi}(A_\ell)=\sigma^\ell(\tilde{\chi}(h))/\tilde{\chi}(h)$. Write $\tilde{\chi}=\prod_{\chi\in\frakX} \chi^{d_\chi}$ where $d_\chi\in \bZ$ and not all of them are zero. Then one sees that
$\{\chi(A_\ell) | \chi\in \frakX\}$ is not multiplicatively $\sigma^\ell$-independent,
which contradicts the condition $(b)$. Hence $H^\circ=\tilde{H}^\circ$. This proves the claim. Now let $m=[\tilde{H}:\tilde{H}^\circ]$. Then $m|\ell$ and setting $i=m$ in (\ref{EQ:powers}) yields that
\begin{equation*}
   \sigma^m(g^{-1})A_m g=\xi^{-1}\sigma^m(h^{-1})A_m h \xi\in \tilde{H}^\circ(\overline{k(\bX)}(x))=H^\circ(\overline{k(\bX)}(x)).
\end{equation*}
So $A_m\in \sigma^m(h)\xi H^\circ(\overline{k(\bX)}(x))\xi^{-1} h^{-1}$. As $\xi H^\circ \xi^{-1}=H^\circ$  and $h\in H^\circ(\overline{k(\bX)}(x))$,
$
A_m\in  H^\circ(\overline{k(\bX)}(x)).
$
The assumption $(a)$ then implies that $m=\ell$, i.e. $\tilde{H}=H$. This contradicts the assumption that $\tilde{H}$ is a proper subgroup of $H$. Therefore $H$ is the Galois group.
\end{proof}
\begin{remark}
\label{RM:criterion}
Let $H$ and $A$ be as in Proposition~\ref{PROP:criterion}.
\begin{itemize}
\item [$(1)$]
If $H$ is connected, i.e. $\ell=[H:H^\circ]=1$, then the condition $(a)$ always holds and the proposition reduces to Lemma~\ref{LM:connectedgroup}.
\item [$(2)$] Assume that $\tilde{H}$ is the Galois group of $\sigma_A(Y)=AY$ over $\overline{k(\bX)}(x)$. From the proof of the sufficient part of the proposition, one sees that $(b)$ implies $H^\circ=\tilde{H}^\circ$. We claim that the converse is also true. Suppose that $H^\circ=\tilde{H}^\circ$. Since $\tilde{H}$ is a subgroup of $H$ by Proposition 1.21 of \cite{vanderPut-Singer}, $[H:\tilde{H}]|\ell$. Denote $m_1=[H:\tilde{H}]$ and $m_2=[\tilde{H}:\tilde{H}^\circ]=\ell/m_1$. By Lemma 1.26 and Corollary 1.17 of \cite{vanderPut-Singer}, $\tilde{H}^\circ$ is the Galois group of $\sigma_A^{m_2}(Y)=A_{m_2}Y$ over $\overline{k(\bX)}(x)$. Note that $\sigma_A^\ell=(\sigma_A^{m_2})^{m_1}$ and $A_\ell=(A_{m_2})_{m_1}$. Applying Lemma 4.1 of \cite{feng2} to $\sigma_A^{m_2}(Y)=A_{m_2}Y$ yields that $[\tilde{H}^\circ:\tilde{H}_\ell]\leq m_1$, where $\tilde{H}_\ell$ is the Galois group of $\sigma_A^\ell(Y)=A_\ell Y$ over $\overline{k(\bX)}(x)$. Hence $\tilde{H}_\ell=\tilde{H}^\circ=H^\circ$. Lemma~\ref{LM:connectedgroup} with $\sigma_B=\sigma_A^\ell$ then implies $(b)$. This proves our claim.
\end{itemize}
\end{remark}

\subsection{Proof of Theorem~\ref{TH:main}}
\label{SUBSEC:proofmaintheorem}
Before we prove the following proposition, let us first recall some results in \cite{feng2}. Note that the reference \cite{feng2} used some different notations, for instance $\nu$-maximal $\sigma_A$-ideals are denoted by $I_{\F,\nu}$ and the stabilizer of $I_{\F,\nu}$ is denoted by $H_{\F,\nu}$. By Proposition 3.10 of \cite{feng2}, the stabilizer of a $\nu$-maximal $\sigma_A$-ideal with sufficiently large $\nu$ is a proto-Galois group of $\sigma_A(Y)=AY$ over $\overline{k(\bX)}(x)$. Precisely, let $\nu$ be an integer greater than the integer $\tilde{d}$ given in Proposition 2.5 of \cite{feng2}, and $I_\nu$ a $\nu$-maximal $\sigma_A$-ideal. Suppose that $I$ is a maximal $\sigma_A$-ideal containing $I_\nu$. Let $G=\stab(I)$ and $H=\stab(I_\nu)$ where $\stab()$ denotes the stabilizer. Then $G$ is the Galois group of $\sigma_A(Y)=AY$ over $\overline{k(\bX)}(x)$. Proposition 3.7 of \cite{feng2} implies that $\Zero(I_\nu)$ and $\Zero(I)$ are trivial $\overline{k(\bX)}(x)$-torsors for $H(\overline{k(\bX)(x)})$ and $G(\overline{k(\bX)(x)})$ respectively, where $\Zero()$ denotes the set of zeroes in $\GL_n(\overline{k(\bX)(x)})$. Let $g\in \Zero(I)\cap \GL_n(\overline{k(\bX)}(x))$. Then
$$
   \Zero(I_\nu)=gH(\overline{k(\bX)(x)})\supset \Zero(I)=gG(\overline{k(\bX)(x)}).
$$
Thus $G\subset H$ and moreover $H$ is a proto-group of $G$.
\begin{proposition}
\label{PROP:protoGalois}
Let $G$ be the Galois group of $\sigma_A(Y)=AY$ over $\overline{k(\bX)}(x)$. Assume that $A\in G(k(\bX)(x))$ and $G$ is defined over $k[\bX]$.
Then there is a basic open subset $U$ of $\bX$ such that $G_\bfc$ is a proto-Galois group of $\sigma_{A(\bfc)}(Y)=A(\bfc)Y$ over $k(x)$ for any $\bfc\in U$, where $G_\bfc$ is defined as in Section~\ref{SEC:algebraicgroups}.
\end{proposition}
\begin{proof}
Let $d$ be an integer greater than the integer $\tilde{d}$ given in Proposition 2.5 of \cite{feng2}. Let $S\subset k[\bX][X]$ be a finite set generating the vanishing ideal of $G$, and let $I$ be the ideal in $\overline{k(\bX)}(x)[X,1/\det(X)]$ generated by $S$. Since $A\in G(k(\bX)(x))$ and $G$ is the Galois group, $I$ is a maximal $\sigma_A$-ideal (see Lemma 1.10 and its proof on page 8 of \cite{vanderPut-Singer}). Suppose that $m$ is a positive integer such that all polynomials in $S$ are of total degree in $X$ not greater than $m$.  Set
$$
      \nu=\max\left\{m, d\right\}.
$$
Then $I$ is a $\nu$-maximal $\sigma_A$-ideal. Due to Lemma~\ref{LM:coefficientbound}, there is a coefficient bound of $I$, say $N$, and a basic open subset $U_1$ of $\bX$ such that for every $\bfc\in U_1$, $N$ is also a coefficient bound of $\nu$-maximal $\sigma_{A(\bfc)}$-ideals in $k(x)[X,1/\det(X)]$.
Let $B$ be a basis of $\rI(N,I)$ where $\rI(N,I)$ is defined as in (\ref{EQ:partial}). Let $\tilde{D}\subset \overline{k(\bX)}$ be a finitely generated $k[\bX]$-algebra such that $B\subset \tilde{D}[x,X]$ and let $\bY$ be the variety over $k$ associated to $\tilde{D}$. Because $S$ and $B$ generate the same ideal $I$, using an argument similar to that in Remark~\ref{RM:corollaries}, one can prove that there is a nonempty open subset $\tilde{U}_1$ of $\bY$ such that for each $\tilde{\bfc}\in \tilde{U}_1$, $v_{\tilde{\bfc}}(S)$ and $v_{\tilde{\bfc}}(B)$ generates the same ideal in $k(x)[X,1/\det(X)]$.
By Proposition~\ref{PROP:maximalideals}, there is a basic open subset $\tilde{U}_2$ of $\bY$ such that for any $\tilde{\bfc}\in \tilde{U}_2$, $v_{\tilde{\bfc}}(B)$  is a $k$-basis of $\rI(N,\tilde{I}_{\tilde{\bfc}})$ for some $\nu$-maximal $\sigma_{A(\tilde{\bfc})}$-ideal $\tilde{I}_{\tilde{\bfc}}$ in $k(x)[X,1/\det(X)]$. Specially, $v_{\tilde{\bfc}}(B)$ generates $\tilde{I}_{\tilde{\bfc}}$. Then for any $\tilde{\bfc}\in \tilde{U}_1\cap \tilde{U}_2$, $v_{\tilde{\bfc}}(B)$ and $v_{\tilde{\bfc}}(S)$ generate the same ideal $\tilde{I}_{\tilde{\bfc}}$. By Lemma~\ref{LM:property}, there is a basic open subset $U_2$ of $\bX$ that is contained in $p_{\bY/\bX}(\tilde{U}_1\cap \tilde{U}_2)$. Due to Proposition~\ref{PROP:groupsfibre}, there is a basic open subset $U_3$ of $\bX$ such that for any $\bfc\in U_3$, $v_\bfc(S)$ defines an algebraic subgroup $G_\bfc$ of $\GL_n(k)$. Now set $U=U_1\cap U_2\cap U_3$ and suppose $\bfc\in U$. Let $\tilde{\bfc}\in \tilde{U}_1\cap \tilde{U}_2\cap p_{\bY/\bX}^{-1}(\bfc)$. Then $G_\bfc(\overline{k(x)})$ is the variety in $\GL_n(\overline{k(x)})$ defined by $\tilde{I}_{\tilde{\bfc}}$ that is generated by $v_{\bfc}(S)(=v_{\tilde{\bfc}}(S))$.
Let $H=\stab(\tilde{I}_{\tilde{\bfc}})$. Since $\tilde{I}_{\tilde{\bfc}}$ is $\nu$-maximal, due to Proposition 3.7 of \cite{feng2}, $G_\bfc(\overline{k(x)})$ is a trivial $k(x)$-torsor for $H(\overline{k(x)})$. As $I_n\in G_\bfc$ and both $G_\bfc$ and $H$ are defined over $k$, we have that $G_\bfc=H$, i.e. $G_\bfc$ is the stabilizer of $\tilde{I}_{\tilde{\bfc}}$. Proposition 3.10 of \cite{feng2} and the choice of $\nu$ then imply that $G_\bfc$ is a proto-Galois group of $\sigma_{A(\tilde{\bfc})}(Y)=A(\tilde{\bfc})Y$ over $k(x)$. The proposition then follows from the fact that $A(\tilde{\bfc})=A(\bfc)$.
\end{proof}

Suppose that $\bfa=(a_1,\cdots,a_m)$ with $a_i\in \overline{k(\bX)}(x)\setminus\{0\}$ and $\ell\geq 0$. Denote 
$$
   \calZ(\bfa,\ell)=\left\{\bfd=(d_1,\cdots,d_m)\in \bZ^m \left| \exists\,f\in \overline{k(\bX)}(x)\setminus\{0\} \,\,\mbox{s.t.}\,\, \bfa^\bfd=\frac{\sigma^\ell(f)}{f}\right.\right\},
$$
where $\bfa^\bfd=a_1^{d_1}\cdots a_m^{d_m}$.
Then $\calZ(\bfa,\ell)$ is a finitely generated $\bZ$-module. We say $a_i$ is $\ell$-standard if for any $\alpha, \beta$ in the set of zeroes and poles of $a_i$, $\alpha-\beta\in \ell \bZ$ implies that $\alpha=\beta$. One has that if $a_i\notin \overline{k(\bX)}$ then $\sigma^\ell(a_i)/a_i$ is not $\ell$-standard. To see this, write $a_i=\lambda\prod_{j=1}^s (x-c_j)^{d_j}$ where $\lambda, c_1,\cdots,c_s\in \overline{k(\bX)}$, $\lambda\neq 0$, $c_{j_1}\neq c_{j_2}$ if $j_1\neq j_2$ and all $d_j$ are nonzero integers. Then
$$\frac{\sigma^\ell(a_i)}{a_i}=\prod_{j=1}^s \frac{(x-(c_j-\ell))^{d_j}}{(x-c_j)^{d_j}}.$$
Set $m_1=\min\{l| \exists \,\,c_i\,\,\mbox{s.t.}\,\,c_1=c_i-l\ell\}$ and $m_2=\max \{l| \exists \,\,c_i\,\,\mbox{s.t.}\,\,c_1=c_i-l\ell\}$.
Then both $x-(c_1+(m_1-1)\ell)$ and $x-(c_1+m_2\ell)$ can not be cancelled in $\sigma^\ell(a_i)/a_i$. That is to say, both $c_1+(m_1-1)\ell$ and $c_1+m_2\ell$ are in the set of zeroes and poles of $\sigma^\ell(a_i)/a_i$. As the difference of $c_1+(m_1-1)\ell$ and $c_1+m_2\ell$ is equal to $(m_1-m_2-1)\ell$ that is a nonzero element in $\ell\bZ$, $\sigma^\ell(a_i)/a_i$ is not $\ell$-standard.
\begin{lemma}
\label{LM:multiindependent}
Suppose that $\bfa=(a_1,\cdots,a_m)$ with $a_i\in k(\bX)(x)\setminus\{0\}$ and $\ell\geq 0$. Then there is a basic open subset $U$ of $\bX$ such that for any $\bfc\in U$, $a_1(\bfc), \cdots,a_m(\bfc)$ are well-defined and
$
   \calZ(\bfa,\ell)=\calZ(v_{\bfc}(\bfa),\ell).
$
\end{lemma}
\begin{proof}
Let $W$ be the set of zeroes and poles of $a_1,\cdots,a_m$ in $\overline{k(\bX)}$, and let $\bfalpha\subset W$ be the representative of $W$ in the quotient group $\overline{k(\bX)}/\ell\bZ$. Suppose $\beta\in W$. Then $\beta=\alpha+\ell d$ for some $\alpha\in \bfalpha$ and $d\in \bZ$. If $d=0$, set $g=1$, otherwise set
\[
    g=\begin{cases}
        \prod_{l=1}^d (x-\alpha-\ell l)^{-1} & d>0 \\
        \prod_{l=0}^{-d-1} (x-\alpha+\ell l) & d<0
    \end{cases}.
\]
Then $x-\beta=\sigma^\ell(g)(x-\alpha)/g$. Under the multiplication with $\sigma^\ell(g)/g$, we can replace $x-\beta$ by $x-\alpha$ for all $a_i$. Hence
for every $i=1,\cdots,m$, we can write
$$
a_i=\xi_i\frac{\sigma^\ell(f_i)}{f_i} \prod_{\alpha\in \bfalpha} (x-\alpha)^{e_{i,\alpha}}
$$
where $\xi_i\in k(\bX)\setminus\{0\}$, $e_{i,\alpha}\in \bZ$ and $f_i\in \overline{k(\bX)}(x)\setminus\{0\}$ whose numerator and denominator are both monic. Set $\bar{a}_i=\prod_{\alpha\in\bfalpha} (x-\alpha)^{e_{i,\alpha}}$ for all $i=1,\cdots,m$. One sees easily that $\bfa^\bfd=\sigma^\ell(f)/f$ if and only if $\bfxi^\bfd=1$ and $\bar{\bfa}^\bfd=\sigma^\ell(\tilde{f})/\tilde{f}$, where $\bfxi=(\xi_1,\cdots,\xi_m)$ and $\bar{\bfa}=(\bar{a}_1,\cdots,\bar{a}_m)$. Since $\bar{\bfa}^\bfd$ is $\ell$-standard, if $\bar{\bfa}^\bfd=\sigma^\ell(\tilde{f})/\tilde{f}$ then $\tilde{f}\in \overline{k(\bX)}$. Therefore $\bfa^\bfd=\sigma^\ell(f)/f$ if and only if $\bfxi^\bfd=1$ and $\bar{\bfa}^\bfd=1$. Namely,
$$
  \calZ(\bfa,\ell)=\calZ(\bfxi,0)\cap \calZ(\bar{\bfa},0).
$$
Let $\Gamma_1$ be the subgroup of $\bG_m(k(\bX))$ generated by $\xi_1,\cdots,\xi_m$.
Let $\tilde{D}\subset \overline{k(\bX)}$ be a finitely generated $k[\bX]$-algebra such that $\Gamma_1, W\subset \tilde{D}$, and let $\bY$ be the variety over $k$ associated to $\tilde{D}$. Let $\Gamma_2$ be the subgroup of $\bG_a(\tilde{D})$ generated by $\{1\}\cup \bfalpha$. Now assume that $\tilde{\bfc}\in \B(\bY,\Gamma_1)\cap \B(\bY,\Gamma_2)$. Then
$$
    a_i(\tilde{\bfc})=\xi_i(\tilde{\bfc})\frac{\sigma^\ell(f_i(\tilde{\bfc}))}{f_i(\tilde{\bfc})} \prod_{\alpha\in \bfalpha} (x-\alpha(\tilde{\bfc}))^{e_{i,\alpha}},
$$
and moreover $\alpha(\tilde{\bfc})-\alpha'(\tilde{\bfc})\notin \ell \bZ$ if $\alpha\neq \alpha'$. A similar argument as above implies that $v_{\tilde{\bfc}}(\bfa)^\bfd=\sigma^\ell(f')/f'$ if and only if $v_{\tilde{\bfc}}(\bfxi)^\bfd=1$ and $v_{\tilde{\bfc}}(\bar{\bfa})^\bfd=1$. In other words,
$$
    \calZ(v_{\tilde{\bfc}}(\bfa),\ell)=\calZ(v_{\tilde{\bfc}}(\bfxi),0)\cap \calZ(v_{\tilde{\bfc}}(\bar{\bfa}),0).
$$
Since $\tilde{\bfc}\in \B(\bY,\Gamma_1)$,
$
  \calZ(v_{\tilde{\bfc}}(\bfxi),0)=\calZ(\bfxi,0).
$
Moreover, one has that $\calZ(v_{\tilde{\bfc}}(\bar{\bfa}),0)=\calZ(\bar{\bfa},0)$ for both of them are equal to
$$\left\{(d_1,\cdots,d_m)\in \bZ^m \left| \sum_{i=1}^m d_i e_{i,\alpha}=0, \,\,\forall\,\,\alpha\in \bfalpha\right.\right\}.$$
Consequently,
$
\calZ(\bfa,\ell)=\calZ(v_{\tilde{\bfc}}(\bfa),\ell).
$
 Lemma~\ref{LM:property} then completes the proof.
\end{proof}
\begin{corollary}
\label{COR:multiindependent}
Let $\bfa=(a_1,\cdots,a_m), \ell$ be as in Lemma~\ref{LM:multiindependent}. Then there is a basic open subset $U$ of $\bX$ such that for any $\bfc \in U$,  $a_1,\cdots,a_m$ are multiplicatively $\sigma^\ell$-independent if and only if so are $a_1(\bfc),\cdots,a_m(\bfc)$.
\end{corollary}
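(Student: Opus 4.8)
The plan is to deduce this purely formally from Lemma~\ref{LM:multiindependent}, using the tautology that multiplicative $\sigma^\ell$-independence is exactly the triviality of the relation module. By definition, $a_1,\dots,a_m$ are multiplicatively $\sigma^\ell$-independent if and only if $\calZ(a_1,\dots,a_m;\ell)=\{(0,\dots,0)\}$, and similarly $\varphi(a_1),\dots,\varphi(a_m)$ are multiplicatively $\sigma^\ell$-independent if and only if $\calZ(\varphi(a_1),\dots,\varphi(a_m);\ell)=\{(0,\dots,0)\}$. So I would simply take $U$ to be the basic open subset of $\Homk$ furnished by Lemma~\ref{LM:multiindependent}: for every $\varphi\in U$ one has $\calZ(a_1,\dots,a_m;\ell)=\calZ(\varphi(a_1),\dots,\varphi(a_m);\ell)$, whence one of these $\bZ$-modules is trivial precisely when the other is, which is exactly the asserted equivalence.

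The only point that calls for a remark is a coefficient-field issue. In the definition of multiplicative $\sigma^\ell$-independence for the specialized tuple the auxiliary rational function is taken over $\bar{k}(x)$, whereas in the definition of $\calZ(\varphi(a_1),\dots,\varphi(a_m);\ell)$ the function $f$ ranges over $\Omega(x)$; I would check that these produce the same module. This is immediate from the normal form used in the proof of Lemma~\ref{LM:multiindependent}: applying Lemma~2.1 of \cite{vanderPut-Singer} both over $\bar{k}(x)$ and over $\Omega(x)$ to the decomposition $\varphi(a_i)=\varphi(\eta_i)\,\sigma^\ell(\varphi(f_i))/\varphi(f_i)\prod_{j}(x-\varphi(\alpha_j))^{e_{i,j}}$ (with $\varphi(\eta_i)$ constants, $\varphi(f_i)\in\bar{k}(x)$, and the $\varphi(\alpha_j)$ in distinct cosets modulo $\ell\bZ$) shows that in either field the relation $\prod_i\varphi(a_i)^{d_i}=\sigma^\ell(f)/f$ is equivalent to the field-free conditions $\prod_i\varphi(\eta_i)^{d_i}=1$ and $\sum_i d_i e_{i,j}=0$ for all $j$. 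Equivalently, one may argue directly: the inclusion of modules in one direction is trivial, and for the other one notes that any two nonzero rational solutions of $\sigma^\ell(y)=g y$ differ by a $\sigma^\ell$-constant, hence by an element of $\Omega$, so it suffices that the equation admit one solution over $\bar{k}(x)$ when $g\in\bar{k}(x)$, which the normal-form (Abramov--Petkov\v{s}ek) analysis provides.

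I do not expect any genuine obstacle here; the content is all in Lemma~\ref{LM:multiindependent}, and the corollary is obtained by reading off the definitions. The only thing needing care is the bookkeeping just described, namely recording that $\calZ$ is insensitive to whether the auxiliary function $f$ is allowed coefficients in $\Omega$ or only in $\bar{k}$; once that is in place, the equivalence and the required basic open set $U$ follow at once.
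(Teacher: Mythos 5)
Your proposal is correct and follows the paper's proof essentially verbatim: the paper also reduces the corollary to the tautology that multiplicative $\sigma^\ell$-independence of a tuple is exactly $\calZ(\cdot;\ell)=\{0\}$ and then quotes Lemma~\ref{LM:multiindependent}. Your additional remark on the coefficient-field bookkeeping (that $\calZ(\varphi(a_1),\dots,\varphi(a_m);\ell)$ is the same whether $f$ ranges over $\Omega(x)$ or over $\bar{k}(x)$) is a point the paper leaves tacit, and your resolution via the Gosper--Petkov\v{s}ek normal form and Lemma~2.1 of \cite{vanderPut-Singer} is correct and a worthwhile clarification, since the corollary is later applied with Proposition~\ref{PROP:criterion} over $\bar{k}(x)$.
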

\begin{proof}
Note that $a_1,\cdots,a_m$ are multiplicatively $\sigma^\ell$-independent if and only if $\calZ(\bfa,\ell)=\{(0,\cdots,0)\}$. Corollary then follows from Lemma~\ref{LM:multiindependent}.
\end{proof}
Now we are ready to prove Theorem~\ref{TH:main}.
\begin{proof}[Proof of Theorem~\ref{TH:main}]
By Theorem 2.7 of \cite{hendriks}, there is $g\in \GL_n(\overline{k(\bX)}(x))$ such that $\sigma(g^{-1})Ag\in G(\overline{k(\bX)}(x))$. Denote $\tilde{A}=\sigma(g^{-1})Ag$. It is well-known that $\sigma_{\tilde{A}}(Y)=\tilde{A}Y$ and $\sigma_A(Y)=AY$ have the same Galois group. Let $D'\subset \overline{k(\bX)}$ be a finitely generated $k[\bX]$-algebra with $F'$ as field of fractions such that $g\in \GL_n(F'(x))$ and let $\bX'$ be the variety over $k$ associated to $D'$. Then there is $c'\in D'$ such that for any $\bfc'\in \bX'_{c'}$, both $g(\bfc')$ and $A(\bfc')$ are well-defined and invertible. For such $\bfc'$,
 $\sigma_{A(\bfc')}(Y)=A(\bfc')Y$ and $\sigma_{\tilde{A}(\bfc')}(Y)=\tilde{A}(\bfc')Y$ have the same Galois group. Remark that $\tilde{A}\in G(k(\bX')(x))$. Suppose that the theorem holds for $\sigma_{\tilde{A}}(Y)=\tilde{A}Y$ and $V'$ is the corresponding basic open subset of $\bX'$. Then for $\bfc'\in V'\cap \bX'_{c'}$, $G_{\bfc'}$ is the Galois group of $\sigma_{A(\bfc')}(Y)=A(\bfc')Y$ over $k(x)$. By Lemma~\ref{LM:property}, there is a basic open subset $V$ of $\bX$ contained in $p_{\bX'/\bX}(V'\cap \bX'_{c'})$. From the fact that $A(\bfc')=A(p_{\bX'/\bX}(\bfc'))$ and $G_{\bfc'}=G_{p_{\bX'/\bX}(\bfc')}$, one has that $G_\bfc$ is the Galois group of $\sigma_{A(\bfc)}(Y)=A(\bfc)Y$ over $k(x)$ for all $\bfc\in V$. Consequently, one only need to prove the theorem for  the case with $A\in G(k(\bX)(x))$.

Let $\frakX\subset \overline{k(\bX)}[X,1/\det(X)]$ be a basis of $\bfchi(G^\circ)$, and let $T$ be a finite set in $k(\bX)[X,1/\det(X)]$ generating the vanishing ideal of $G^\circ$. Let $\tilde{D}\subset \overline{k(\bX)}$ be a finitely generated $k[\bX]$-algebra such that $T, \frakX \subset \tilde{D}[X,1/\det(X)]$ and $\bY$ the variety over $k$ associated to $\tilde{D}$. Set $\ell=[G:G^\circ]$. Since $G$ is the Galois group of $\sigma_A(Y)=AY$ over $\overline{k(\bX)}(x)$ and $A\in G(k(\bX)(x))$, Proposition~\ref{PROP:criterion} implies that $A_m\notin G^\circ(\overline{k(\bX)}(x))$ for all positive $m$ with $m|\ell$ and $m\neq \ell$, and $\{\chi(A_\ell) | \chi\in \frakX\}$ is multiplicatively $\sigma^\ell$-independent. Thus, for all such $m$, there is $q_m\in T$ such that $q_m(A_m)\neq 0$. By Propositions~\ref{PROP:protoGalois}, ~\ref{PROP:groupsfibre} and ~\ref{PROP:character}, there is a basic open subset $\tilde{U}_1$ of $\bY$ such that for any $\tilde{\bfc}\in \tilde{U}_1$, one has that
\begin{itemize}
\item [$(a)$]
    $G_{\tilde{\bfc}}$ is a proto-Galois group of $\sigma_{A(\tilde{\bfc})}(Y)=A(\tilde{\bfc})Y$ over $k(x)$, and
\item [$(b)$]
   $[G_{\tilde{\bfc}}:G_{\tilde{\bfc}}^\circ]=[G:G^\circ]=\ell$, and
\item [$(c)$]
    $v_{\tilde{\bfc}}(\frakX)$ is a basis of $\bfchi(G_{\tilde{\bfc}}^\circ)$.
\end{itemize}
By Corollary~\ref{COR:multiindependent}, there is a basic open subset $\tilde{U}_2$ of $\bY$ such that for any $\tilde{\bfc}\in \tilde{U}_2$,
$\{v_{\tilde{\bfc}}(\chi(A_\ell))| \chi \in \frakX\}$ is multiplicatively $\sigma^\ell$-independent. Let $c$ be a nonzero element in $\tilde{D}$ such that for any $\tilde{\bfc}\in \bY_c$,
$v_{\tilde{\bfc}}(q_m(A_m))\neq 0$ for all positive $m$ with $m|\ell$ and $m\neq \ell$. Set
$$
\tilde{U}=\tilde{U}_1\cap \tilde{U}_2  \cap \bY_c
$$
and assume that $\tilde{\bfc}\in U$.
Since $v_{\tilde{\bfc}}(\chi(A_\ell))=v_{\tilde{\bfc}}(\chi)(A(\tilde{\bfc})_\ell)$, $\{v_{\tilde{\bfc}}(\chi)(A(\tilde{\bfc})_\ell)| \chi\in \frakX\}$ is multiplicatively $\sigma^\ell$-independent, that is to say, $\{\bar{\chi}(A(\tilde{\bfc})_\ell)| \bar{\chi}\in v_{\tilde{\bfc}}(\frakX)\}$ is multiplicatively $\sigma^\ell$-independent.
On the other hand, for all positive $m$ with $m|\ell$ and $m\neq \ell$, since $v_{\tilde{\bfc}}(q_m)(A(\tilde{\bfc})_m)=v_{\tilde{\bfc}}(q_m(A_m))\neq 0$, $A(\tilde{\bfc})_m \notin G_{\tilde{\bfc}}^\circ(k(x))$. By Proposition~\ref{PROP:criterion}, $G_{\tilde{\bfc}}$ is the Galois group of $\sigma_{A(\tilde{\bfc})}(Y)=A(\tilde{\bfc})Y$ over $k(x)$. The theorem then follows from Lemma~\ref{LM:property} and the fact that $A(\tilde{\bfc})=A(p_{\bY/\bX}(\tilde{\bfc}))$ and $G_{\tilde{\bfc}}=G_{p_{\bY/\bX}(\tilde{\bfc})}$.
\end{proof}

\begin{example}
\label{example3}Consider the linear difference equation $\sigma_A(Y)=AY$ with
$$
   A=\begin{pmatrix}
            x & t_1 x & 0 \\
            x & x & 0\\
            0  & 0 & t_2
   \end{pmatrix}
$$
where $t_1, t_2$ are parameters. Set $k=\bQ$ and $\bX=k^2$. Then $k(\bX)=k(t_1,t_2)$ and $A\in \GL_n(k(\bX)(x))$. Let
$
   S=\{X_{11}-X_{22}, X_{12}-t_1X_{21},X_{13},X_{23},X_{31},X_{32}\},
$
and denote by $H$ the variety in $\GL_3(\overline{k(\bX)})$ defined by $S$, i.e.
$$
  H=\left\{\left. \begin{pmatrix}
          a & t_1b &  0  \\
          b & a  & 0\\
          0 & 0 & c
  \end{pmatrix}\right | a,b,c \in \overline{k(\bX)}, c(a^2-t_1b^2)\neq 0 \right\}.
$$
One can verify that $H$ is connected and a basis of $\bfchi(H)$ can be represented by
$$
 \frakX= \{ \chi_1=X_{11}-\sqrt{t_1}X_{21}, \chi_2=X_{11}+\sqrt{t_1}X_{21}, \chi_3=X_{33}\}.
$$
Furthermore, one can verify that $A\in H(\overline{k(\bX)}(x))$ and $H$ is the Galois group of $\sigma_A(Y)=AY$ over $\overline{k(\bX)}(x)$. We shall find a basic open subset $U$ of $\bX$ such that $H_\bfc$ is the Galois group of $\sigma_{A(\bfc)}=A(\bfc)Y$ over $k(x)$ for all $\bfc\in U$. For the sake of simplicity, at some steps, we do not follow the proofs of preceding lemmas or propositions to get the corresponding basic open sets.

First of all, $A(\bfc)$ is invertible only if $\bfc\in \bX_{(t_1-1)t_2}$. Moreover, if $\bfc\in \bX_{(t_1-1)t_2}$, $H_\bfc$ is a connected algebraic subgroup of $\GL_3(k)$. It is easy to see that if $\bfc\in \bX_{t_1(t_1-1)t_2}$ then $A(\bfc)\in H_\bfc(k(x))$ and $H_\bfc^t=\{\id\}$. Thus for such $\bfc$, $H_\bfc$ is a proto-Galois group of $\sigma_{A(\bfc)}(Y)=A(\bfc)Y$ over $k(x)$.

Second, since $\chi_i$ is defined over $k(\sqrt{t_1},t_2)$, we need to extend $k[\bX]$ to $k[t_1,t_2,\sqrt{t_1}]$ whose associated variety we denote by $\bY$. Because $H^t=\{\id\}$ and $H_{\tilde{\bfc}}^t=\{\id\}$ for any $\tilde{\bfc}\in \bY_{t_1(t_1-1)t_2}$, the proof of Proposition~\ref{PROP:character} implies that if $v_{\tilde{\bfc}}(\frakX)$ is multiplicatively independent then it is a basis of $\bfchi(H_{\tilde{\bfc}})$. In the proof of Proposition~\ref{PROP:character}, take $g=(g_{i,j})$ with
$g_{1,1}=g_{2,2}=5/2, g_{1,2}=t_1g_{2,1}=\sqrt{t_1}/2, g_{3,3}=5$ and other entries being zero. Then one has that $\chi_1(g)=2, \chi_2(g)=3, \chi_3(g)=5$. From this, one sees that if $\tilde{\bfc}\in \bY_{t_1(t_1-1)t_2}$ then $v_{\tilde{\bfc}}(\frakX)$ is multiplicatively independent and thus it is a basis of $\bfchi(H_{\tilde{\bfc}})$.

Third, denote $\bfa=(\chi_1(A),\chi_2(A),\chi_3(A))$ where $$\chi_1(A)=x(1-\sqrt{t_1}), \chi_2(A)=x(1+\sqrt{t_1}),\chi_3(A)=t_2.$$
Take $\bfxi=(1-\sqrt{t_1}, 1+\sqrt{t_1}, t_2)$ and $\bar{\bfa}=(x,x,1)$. It is easy to see that $\calZ(\bfxi,0)=\{(0,0,0)\}$. Let $\tilde{\Gamma}$ be the subgroup of $\bG_m(k(\bY))$ generated by $1-\sqrt{t_1}, 1+\sqrt{t_1}, t_2$. Then for any $\tilde{\bfc}\in \B(\bY,\tilde{\Gamma})$, $\calZ(v_{\tilde{\bfc}}(\bfxi),0)=\{(0,0,0)\}$ and therefore
$$
   \calZ(v_{\tilde{\bfc}}(\bfa),1)=\calZ(v_{\tilde{\bfc}}(\bfxi),0)\cap \calZ(v_{\tilde{\bfc}}(\bar{\bfa}),0)=\{(0,0,0)\}
$$
 i.e. $\{v_{\tilde{\bfc}}(\chi_i(A))|i=1,2,3\}$ is multiplicatively $\sigma$-independent. Let $\tilde{U}=\bY_{t_1(t_1-1)t_2}\cap \B(\bY,\tilde{\Gamma})$. Then for any $\tilde{\bfc}\in \tilde{U}$, since $H_{\tilde{\bfc}}$ is connected, Lemma~\ref{LM:connectedgroup} implies that $H_{\tilde{\bfc}}$ is the Galois group of $\sigma_{A(\tilde{\bfc})}(Y)=A(\tilde{\bfc})Y$ over $k(x)$.

 Finally, by (\ref{eqn:basicopensets}), $\B(\bX,\tilde{\Gamma})=p_{\bY/\bX}(\B(\bY,\tilde{\Gamma}))$ and so $p_{\bY/\bX}(\tilde{U})$ contains $\B(\bX,\tilde{\Gamma})\cap \bX_{t_1(t_1-1)t_2}$. The latter set is what we need.
\end{example}
\begin{example}
\label{example4}
Consider
$$\sigma_t(y)=ty$$
over $\CX(x,t)$ where $t$ is a parameter. This equation has a solution $t^x$.
\begin{enumerate}
\item
 $t$ is endowed with the usual derivation $\partial_t$. The differential Galois group is $\bG_m(\CX)$ and Proposition 2.9 of \cite{hardouin-singer} implies that $t^x$ satisfies a first order linear differential equation over $\CX(x,t)$ with respect to $\partial_t$. Actually, one easily sees that $t^x$ is a solution of $\partial_t(y)=(x/t)y$.
\item $t$ is endowed with the shift operator $\tau(t)=t+1$. Example 3.8 of \cite{ovchinnikov-wibmer} implies that $t^x$ does not satisfy any nonzero difference equation over $\CX(x,t)$ with respect to $\tau$.
\item $t$ is a usual parameter. The usual Galois group is $\bG_m(\overline{\CX(t)})$ and it implies that $t^x$ is transcendental over $\CX(x,t)$. One sees that $c^x$ is algebraic over $\CX(x)$ if and only if $c$ is a root of unity. In particular, when $c=1$, the Galois group of the specialized equation is $\{\id\}$.
\end{enumerate}
\end{example}
The above example provides one a glance at the difference between parameterized difference Galois theories and difference Galois theory with usual parameters. Remark that for higher order linear difference equations, the phenomenon appearing in (3) of Example~\ref{example4} can also happen i.e. the Galois group of the specialized equation is extremely small under some specialization even though the original one is as large as the whole general linear group.
\section{An application}
\label{SEC:application}
In this section, we apply Theorem~\ref{TH:main} to the inverse problem in difference Galois theory. The notations are as before, for instance $k$ denotes an algebraically closed field of characteristic zero, $\sigma_B$ with $B\in \GL_n(k(x))$ denotes the $k$-automorphism of $k(x)[X,1/\det(X)]$ induced by $\sigma_B(X)=BX$ and $\sigma(x)=x+1$,  $v_\bfc$ denotes the map from $k[\bX]$ to $k$ given by $v_\bfc(f)=f(\bfc)$ for $f\in k[\bX]$ and $\stab(I)$ stands for the stabilizer of an ideal $I$. The inverse problem asks which algebraic subgroups of $\GL_n(k)$ occur as the Galois groups of $\sigma_B(Y)=BY$ over $k(x)$ with $B\in \GL_n(k(x))$. In Chapter 3 of \cite{vanderPut-Singer}, van der Put and Singer raised the following conjecture.
\begin{conjecture}
\label{CON1}
An algebraic subgroup $G$ of $\GL_n(k)$ is the Galois group of $\sigma_B(Y)=BY$ over $k(x)$ for some $B\in \GL_n(k(x))$ if and only if $G/G^\circ$ is cyclic.
\end{conjecture}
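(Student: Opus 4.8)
The forward implication is classical and not where the difficulty lies: for any linear difference equation $\sigma(Y)=AY$ over $\Omega(x)$ the quotient of its Galois group by the identity component is cyclic (van der Put--Singer, Chapter~1 of \cite{vanderPut-Singer}), so the entire content of the conjecture is the converse — realizing a prescribed $G$ with $G/G^\circ$ cyclic. The first move is to invoke the main result of this paper. After replacing $k$ by a finite extension we may assume $G$ is defined over a number field; then, given a realization of $G(\CX)$ over $\CX(x)$, Theorem~\ref{TH:main} together with Corollary~\ref{COR:basicopen} produces a specialization carrying the realizing matrix to a matrix over $\tilde{k}(x)$ with $[\tilde{k}:k]<\infty$ whose Galois group is still $G$, and base change gives $G(\Omega)$ over $\Omega(x)$. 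This is exactly the reduction the present paper already carries out, so henceforth we take $\Omega=\CX$.

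Over $\CX$ I would peel off the component group by means of the criterion of Proposition~\ref{PROP:criterion}. Put $m=[G:G^\circ]$, fix $g\in G(\CX)$ whose class generates $G/G^\circ\cong\bZ/m\bZ$, and look for a realizing matrix of the constrained form $A=Bg$ with $B\in G^\circ(\CX(x))$. Since $\sigma$ fixes the constant matrix $g$, one has $A_i=\bigl(\prod_{j=0}^{i-1} g^{\,j}\sigma^{\,i-1-j}(B)g^{-j}\bigr)g^{\,i}\in G^\circ(\CX(x))\,g^{\,i}$; because $g^{\,i}\notin G^\circ$ for $0<i<m$, hypothesis $(a)$ of Proposition~\ref{PROP:criterion} holds automatically, and $A_m\in G^\circ(\CX(x))$. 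Thus the problem collapses to two tasks inside the connected group $G^\circ$: (i) choose $B$ so that $G^\circ$ is the Galois group of $\sigma^m(Y)=A_m Y$, where $A_m$ is the ``iterated $\sigma^m$-norm'' of the free parameter $B$ displayed above (this in particular makes $G$ a proto-Galois group of $\sigma(Y)=AY$, so that Proposition~\ref{PROP:criterion} applies); and (ii) arrange that, for a fixed basis $\chi_1,\dots,\chi_l$ of $X(G^\circ)$, the rational functions $\chi_1(A_m),\dots,\chi_l(A_m)$ are multiplicatively $\sigma^m$-independent. Granting (i) and (ii), Proposition~\ref{PROP:criterion} yields that $G$ is the Galois group of $\sigma(Y)=AY$ over $\CX(x)$.

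Task (ii) should be the softer of the two: by the factorization used in Lemma~\ref{LM:multiindependent} and Corollary~\ref{COR:multiindependent}, failure of multiplicative $\sigma^m$-independence is detected by a $\bZ$-linear relation among the divisors on the projective line of the $\chi_j(A_m)$ together with a relation among their leading constants, and the ``toral'' freedom in $B$ — for instance pre- and post-multiplying $B$ by elements of a maximal torus of $G^\circ$ supported (as regards poles and zeros) at well-separated points of $\CX$ — gives enough room to destroy every such relation. Task (i) is the genuine obstacle: it is in essence the connected inverse problem over $\CX(x)$, with the extra rigidity that the realizing datum must be presentable as an iterated $\sigma^m$-norm interacting with conjugation by $g$. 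Here I would bring in the analytic side of complex difference equations: attach to a difference module over $\CX(x)$ its Birkhoff connection matrix (comparing the two canonical fundamental solutions at $x=\pm\infty$), prescribe this connection datum — in the spirit of the Riemann--Hilbert solution of the differential inverse problem — so that the Zariski closure of the group it generates is $G^\circ$, and then perform a twisted descent to exhibit such a module in the form $\sigma^m(Y)=A_m Y$ for a suitable $B\in G^\circ(\CX(x))$. Reconciling the rigidity of that constrained matrix shape with whatever flexibility the connected realization affords is, I expect, where the real work lies; once that is done, everything downstream is governed by Proposition~\ref{PROP:criterion}, Lemma~\ref{LM:connectedgroup}, and the specialization machinery established in this paper.
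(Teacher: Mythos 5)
The statement you are trying to prove is a \emph{conjecture}: the paper does not prove it, and explicitly records that the general case remains open. What the paper actually establishes about it is (i) the necessity direction, which is Proposition 1.20 of \cite{vanderPut-Singer}, (ii) Theorem~\ref{TH:inverseproblem}, a reduction showing that if Conjecture~\ref{CON2} holds for $\Omega=\CX$ then it holds for every algebraically closed $\Omega$ of characteristic zero, and (iii) Corollary 8.6 and Lemma 8.12 of \cite{vanderPut-Singer} for the known cases ($G$ connected, or a cyclic extension of a torus). Your first paragraph reproduces (i) and (ii) essentially as the paper does — the specialization argument via Theorem~\ref{TH:main}, plus the extension of a maximal $\sigma$-ideal from $\bar{k}(x)$ to $\Omega(x)$ using Proposition 2.4 of \cite{chatzidakis-hardouin-singer}, is exactly the content of Theorem~\ref{TH:inverseproblem} — so up to that point you are on solid and well-trodden ground.

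The gap is everything after the reduction to $\CX$. Your computation that $A=Bg$ forces $A_i\in G^\circ(\CX(x))g^i$ and hence condition $(a)$ of Proposition~\ref{PROP:criterion} is correct (it mirrors the normality argument inside the proof of that proposition), and the reformulation of the problem as tasks (i) and (ii) is a reasonable way to organize an attack. But task (i) — producing $B\in G^\circ(\CX(x))$ such that $G$ is a proto-Galois group of $\sigma(Y)=Bg\,Y$ and $G^\circ$ is the Galois group of the iterated equation $\sigma^m(Y)=A_mY$ — \emph{is} the open problem, not a step that can be discharged by the machinery of this paper or by a routine appeal to Birkhoff connection matrices; the analytic realizations in \cite{vanderPut-Singer} handle only the connected case and cyclic extensions of tori, precisely because the twisted descent you gesture at (compatibility of the connection datum with conjugation by $g$ and with the constrained norm form of $A_m$) has not been carried out in general. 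Task (ii) is also not ``soft'': perturbing $B$ by torus elements changes $A_m$ in a way entangled with the conjugation by $g$, and you give no argument that the relations can all be killed simultaneously while preserving whatever realization solves (i). So the proposal is a strategy with an explicitly unresolved core, not a proof, and it cannot be compared favorably or unfavorably to a proof in the paper because none exists.
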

It was shown in Proposition 1.20 of \cite{vanderPut-Singer} that $G/G^\circ$ is necessary to be cyclic if $G$ is the Galois group of $\sigma_B(Y)=BY$ over $k(x)$. Therefore, to prove Conjecture~\ref{CON1}, it suffices to prove the sufficient part, which we restate as the following conjecture.
\begin{conjecture}
\label{CON2}
If $G$ is an algebraic subgroup of $\GL_n(k)$ satisfying that $G/G^\circ$ is cyclic then $G$ is the Galois group of $\sigma_B(Y)=BY$ over $k(x)$ for some $B\in \GL_n(k(x))$.
\end{conjecture}
When $k=\CX$, for connected algebraic groups and cyclic extensions of tori, analytic proofs of Conjecture~\ref{CON2} were presented in Corollary 8.6 and Lemma 8.12 of \cite{vanderPut-Singer}, respectively. In Chapter 3 of the same book, an algebraic proof of Conjecture~\ref{CON2} was also given when $k$ is any algebraically closed field of characteristic zero and $G$ is connected. For the general case, Conjecture~\ref{CON2} remains open.


Using a similar argument as that in the proof of Theorem 4.4 of \cite{singer}, we  can prove the following theorem.
\begin{theorem}
\label{TH:inverseproblem}
If Conjecture~\ref{CON2} holds for $k=\CX$, then it holds for any algebraically closed field $k$ of characteristic zero.
\end{theorem}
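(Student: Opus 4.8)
The plan is to run a descent argument of the same shape as the proof of Theorem~4.4 of \cite{singer}: transport $G$ to a complex algebraic group, realize it over $\CX(x)$ using the assumed case of Conjecture~\ref{CON2}, specialize the realization back down to a finitely generated field with the help of Theorem~\ref{TH:main}, and finally enlarge the field of constants. Concretely, let $G\subset\GL_n(\Omega)$ have $G/G^\circ$ cyclic. First I would fix a finite set $S\subset\Omega[X,1/\det(X)]$ with $\bV_\Omega(S)=G$ and let $k_0\subset\Omega$ be the subfield generated over $\CQ$ by the coefficients occurring in $S$; it is finitely generated over $\CQ$ and $S\subset k_0[X,1/\det(X)]$. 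Since $k_0$ has finite transcendence degree over $\CQ$ it embeds into $\CX$, and as $\CX$ is algebraically closed such an embedding extends to an embedding $\iota$ of the algebraic closure $\overline{k_0}$ of $k_0$ in $\Omega$ into $\CX$, with image $\overline{\iota(k_0)}$. Because the component group of an algebraic group defined over a field does not depend on the choice of algebraically closed field of points, $\bV_\CX(\iota(S))/\bV_\CX(\iota(S))^\circ$ is isomorphic to $G/G^\circ$ and hence cyclic. Assuming Conjecture~\ref{CON2} for $\CX$, there is then $A'\in\GL_n(\CX(x))$ whose difference Galois group over $\CX(x)$ equals $\bV_\CX(\iota(S))$.

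Next I would descend. Choose a finitely generated $\iota(k_0)$-subalgebra $D_1\subset\CX$ whose fraction field $F_1$ contains the entries of $A'$ and of $A'^{-1}$, so that $A'\in\GL_n(F_1(x))$ and $\iota(S)\subset\iota(k_0)[X,1/\det(X)]\subset D_1[X,1/\det(X)]$. Theorem~\ref{TH:main}, applied with $\CX$ in place of $\Omega$, with $\iota(k_0)$ in place of $k$, $D_1$ in place of $D$, $A'$ in place of $A$ and $\iota(S)$ in place of $S$, yields a basic open subset $U$ of $\Hom_{\iota(k_0)}(D_1,\overline{\iota(k_0)})$ such that for every $\varphi\in U$ the set $\bV_{\overline{\iota(k_0)}}(\varphi(\iota(S)))$ is the Galois group of $\sigma(Y)=\varphi(A')Y$ over $\overline{\iota(k_0)}(x)$; by Corollary~\ref{COR:basicopen}, $U$ is nonempty, so I fix $\varphi\in U$. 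The point that makes this work is that $\iota(S)$ has coefficients in $\iota(k_0)$ and $\varphi$ fixes $\iota(k_0)$, so $\varphi(\iota(S))=\iota(S)$; moreover $\varphi$ takes values in the algebraic closure of $\iota(k_0)$, so the entries of $\varphi(A')$ lie in $\tilde k(x)$ for a suitable finite extension $\tilde k$ of $\iota(k_0)$. Thus $\sigma(Y)=\varphi(A')Y$ is an equation with coefficients algebraic over $\iota(k_0)$ whose Galois group over $\overline{\iota(k_0)}(x)$ is exactly $\bV_{\overline{\iota(k_0)}}(\iota(S))$.

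Finally I would transport back along $\iota$ and enlarge the field of constants. Let $\psi$ be the inverse of the extension of $\iota$ to algebraic closures; it is an isomorphism $\overline{\iota(k_0)}\to\overline{k_0}$ restricting to $\iota^{-1}$ on $\iota(k_0)$, hence $\psi(\iota(S))=S$. Extending $\psi$ to an isomorphism of difference fields $\overline{\iota(k_0)}(x)\to\overline{k_0}(x)$ that fixes $x$ and using the functoriality of the Picard--Vessiot construction and of the Galois group under such isomorphisms, the matrix $B:=\psi(\varphi(A'))$ lies in $\GL_n(\overline{k_0}(x))$, in fact in $\GL_n(k'(x))$ for the finite extension $k'=\psi(\tilde k)$ of $k_0$, and its Galois group over $\overline{k_0}(x)$ equals $\psi(\bV_{\overline{\iota(k_0)}}(\iota(S)))=\bV_{\overline{k_0}}(S)$. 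Since $\overline{k_0}\subset\Omega$ are both algebraically closed of characteristic zero and $\bV_{\overline{k_0}}(S)$ is defined over $k_0$, the standard behaviour of difference Galois groups under extension of the algebraically closed field of constants (see \cite{vanderPut-Singer}, compare the proof of Theorem~4.4 of \cite{singer}) gives that the Galois group of $\sigma(Y)=BY$ over $\Omega(x)$ is $\bV_\Omega(S)=G$. As $B\in\GL_n(\overline{k_0}(x))\subset\GL_n(\Omega(x))$, this realizes $G$ as a difference Galois group over $\Omega(x)$, which is Conjecture~\ref{CON2} for $\Omega$.

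The routine parts are the choice of $k_0$ and $D_1$ and the verification of the hypotheses of Theorem~\ref{TH:main}, which together with Corollary~\ref{COR:basicopen} does most of the work. I expect the two points that need care to be: the bookkeeping with the field isomorphism $\iota$ (making sure that the realization over $\CX(x)$, together with the freeness of the specialization on $\iota(S)$, genuinely descends to an equation whose coefficients are algebraic over $k_0$ and whose Galois group is still the prescribed group); and the very last step, namely the invariance of a difference Galois group under passing from $\overline{k_0}$ to the larger constant field $\Omega$, which is where I would rely on the base-change results for difference Picard--Vessiot extensions rather than reprove anything.
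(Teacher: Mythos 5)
Your proposal is correct and follows essentially the same route as the paper: realize the group over $\CX(x)$ using the assumed case of the conjecture, descend to $\bar{k}(x)$ via Theorem~\ref{TH:main} (using that $\varphi$ fixes the defining equations $S$), and then extend the constants from $\bar{k}$ to $\Omega$. The only difference is presentational — you make the embedding $\iota$ and its inverse explicit, whereas the paper simply "views $\bar{k}$ as a subfield of $\CX$" and justifies the final base-change step concretely via Proposition 2.4 of \cite{chatzidakis-hardouin-singer} (a maximal $\sigma$-ideal of $\bar{k}(x)[X,1/\det(X)]$ generates a maximal $\sigma$-ideal over $\Omega(x)$, with the same stabilizer), which is the precise form of the "standard base-change behaviour" you invoke.
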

\begin{proof}
Let $G$ be an algebraic subgroup of $\GL_n(k)$ with $G/G^\circ$ cyclic. Suppose that the vanishing ideal of $G$ is generated by a finite set $S\subset k[X,1/\det(X)]$. Assume that the cardinality of $k$ is at most the cardinality of $\CX$. Then we can assume that $k\subset \CX$, and thus $G(\CX)$ is an algebraic subgroup of $\GL_n(\CX)$ with  $G(\CX)/G^\circ(\CX)$ cyclic. The assumption implies that $G(\CX)$ is the Galois group of $\sigma_B(Y)=BY$ over $\CX(x)$ for some $B\in \GL_n(\CX(x))$. Without loss of generality, we may assume that $B\in G(\CX(x))$. Let $D\subset \CX$ be a finitely generated $k$-algebra such that the entries of $B$ are all in the field of fractions of $D[x]$, and let $\bX$ be the variety over $k$ associated to $D$. We claim that $G(\overline{k(\bX)})$ is the Galois group of $\sigma_B(Y)=BY$ over $\overline{k(\bX)}(x)$. Otherwise, by Proposition 1.21 of \cite{vanderPut-Singer}, there is $T\in G(\overline{k(\bX)}(x))$ and a proper $\overline{k(\bX)}$-subgroup $H$ of $G(\overline{k(\bX)})$ such that $\sigma(T)BT^{-1}\in H(\overline{k(\bX)}(x))$. Since $\overline{k(\bX)}\subset \CX$, $H(\CX)$ is a proper subgroup of $G(\CX)$ and $T\in G(\CX(x))$. By Proposition 1.21 of \cite{vanderPut-Singer} again, $G(\CX)$ is not the Galois group of $\sigma_B(Y)=BY$ over $\CX(x)$, a contradiction. This proves our claim. Due to Theorem~\ref{TH:main}, there is $\bfc\in \bX$ such that $G_\bfc$, the variety in $\GL_n(k)$ defined by $v_\bfc(S)$, is the Galois group of $\sigma_{B(\bfc)}(Y)=B(\bfc)Y$ over $k(x)$. On the other hand, since $S\subset k[X,1/\det(X)]$, $S=v_\bfc(S)$ and then $G=G_\bfc$. Thus $G$ is the Galois group of $\sigma_{B(\bfc)}(Y)=B(\bfc)Y$ over $k(x)$.

Now assume that the cardinality of $k$ is larger than the cardinality of $\CX$. Then we can assume that $\CX\subset k$ and $G$ is defined over $\CX$.
By the assumption again, $G(\CX)$ is the Galois group of $\sigma_B(Y)=BY$ over $\CX(x)$ for some $B\in \GL_n(\CX(x))$. Let $I$ be a maximal $\sigma_B$-ideal of $\CX(x)[X,1/\det(X)]$ such that $G(\CX)=\stab(I)$ and let $\tilde{I}$ be the ideal in $k(x)[X,1/\det(X)]$ generated by $I$. Due to Proposition 2.4 of \cite{chatzidakis-hardouin-singer}, $\tilde{I}$ is a maximal $\sigma_B$-ideal. One can verify that $\stab(\tilde{I})=G$. So $G$ is the Galois group of $\sigma_B(Y)=BY$ over $k(x)$.
\end{proof}
The above theorem together with Corollary 8.6 and Lemma 8.12 of \cite{vanderPut-Singer} implies the following corollary.
\begin{corollary}
Conjecture~\ref{CON2} holds when $G$ is a connected affine algebraic group or a cyclic extension of a torus.
\end{corollary}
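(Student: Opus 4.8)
The plan is to push the hypothesis down to a finitely generated subfield of $\CX$, realize $G$ there via Theorem~\ref{TH:main}, and then push back up to an arbitrary $\Omega$ by base change for maximal $\sigma$-ideals. First I would observe that if $G\subset\GL_n(\Omega)$ has $G/G^\circ$ cyclic and $G=\bV_\Omega(S)$ for a finite set $S$, then the coefficients appearing in $S$ generate a field $k$ that is finitely generated over $\CQ$, so we may assume $S\subset k[X,1/\det(X)]$. Since $k$ has finite transcendence degree over $\CQ$ it embeds into $\CX$; fixing such an embedding, $\bar{k}$ embeds into $\CX$ as well, and $G(\CX)=\bV_\CX(S)$ still has $G(\CX)/G^\circ(\CX)$ cyclic, because neither the defining equations of the identity component nor the number of connected components changes when the algebraically closed base field changes. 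By the assumed validity of Conjecture~\ref{CON2} over $\CX$, there is $A\in\GL_n(\CX(x))$ whose difference equation $\sigma(Y)=AY$ over $\CX(x)$ has $G(\CX)$ as Galois group.

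Next I would invoke Theorem~\ref{TH:main}. Choose a finitely generated $k$-algebra $D\subset\CX$ with field of fractions $F$ such that $A\in\GL_n(F(x))$ (and automatically $S\subset D[X,1/\det(X)]$). Then Theorem~\ref{TH:main} produces a basic open subset $U$ of $\Homk$ such that for every $\varphi\in U$ the set $\bV_{\bar{k}}(\varphi(S))$ is the Galois group of $\sigma(Y)=\varphi(A)Y$ over $\bar{k}(x)$. By Corollary~\ref{COR:basicopen} the set $U$ is nonempty, so I may fix some $\varphi\in U$. Since $\varphi$ is a $k$-homomorphism and $S$ has coefficients in $k$, we have $\varphi(S)=S$, and therefore $G(\bar{k})=\bV_{\bar{k}}(S)$ is the Galois group of $\sigma(Y)=\varphi(A)Y$ over $\bar{k}(x)$.

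Finally I would ascend from $\bar{k}$ to $\Omega$. Viewing $\bar{k}$ as a subfield of $\Omega$, I pick a maximal $\sigma_{\varphi(A)}$-ideal $I$ of $\bar{k}(x)[X,1/\det(X)]$ with $\stab(I)=G(\bar{k})$, and let $\tilde{I}$ be the ideal generated by $I$ in $\Omega(x)[X,1/\det(X)]$. Using Proposition 2.4 of \cite{chatzidakis-hardouin-singer}, $\tilde{I}$ remains a maximal $\sigma_{\varphi(A)}$-ideal, so it defines a Picard--Vessiot ring for $\sigma(Y)=\varphi(A)Y$ over $\Omega(x)$, and one checks that $\stab(\tilde{I})=G$. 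Hence $G$ is the Galois group of $\sigma(Y)=\varphi(A)Y$ over $\Omega(x)$, with $\varphi(A)\in\GL_n(\bar{k}(x))\subset\GL_n(\Omega(x))$, completing the reduction.

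The step I expect to be the main obstacle is the last one: one must verify both that extension of scalars from $\bar{k}(x)$ to $\Omega(x)$ preserves maximality of the $\sigma$-ideal, and that the stabilizer of the extended ideal is exactly $G$ and not some strictly larger algebraic group, i.e. that the vanishing ideal of $\stab(I)$ is already generated by elements over $\bar{k}$. The first point is supplied by the cited result on extension of constants for difference rings; the second is a flatness/base-change argument for the stabilizer construction. Everything preceding it is routine once one has the density of basic open subsets (Corollary~\ref{COR:basicopen}) and the observation that $S$, having coefficients in $k$, is fixed by every $k$-homomorphism.
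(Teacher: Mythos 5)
Your reduction from $\Omega$ to $\CX$ and back is exactly the content of Theorem~\ref{TH:inverseproblem}, the theorem immediately preceding this corollary, and your steps (descend $S$ to a field $k$ finitely generated over $\CQ$, realize $G(\CX)$ over $\CX(x)$, specialize via Theorem~\ref{TH:main} and Corollary~\ref{COR:basicopen} to get $G(\bar{k})$ as a Galois group over $\bar{k}(x)$, then extend a maximal $\sigma_{\varphi(A)}$-ideal from $\bar{k}(x)[X,1/\det(X)]$ to $\Omega(x)[X,1/\det(X)]$ using Proposition 2.4 of \cite{chatzidakis-hardouin-singer} and check the stabilizer) coincide with the paper's proof of that theorem. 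So the route is the same; the paper simply quotes Theorem~\ref{TH:inverseproblem} rather than re-deriving it.

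The one genuine issue is that your argument, as written, only establishes the conditional statement: you invoke ``the assumed validity of Conjecture~\ref{CON2} over $\CX$'' to produce $A\in\GL_n(\CX(x))$ with Galois group $G(\CX)$, and you never use the hypothesis that $G$ is connected or a cyclic extension of a torus. The corollary is unconditional, so the base case over $\CX$ must actually be discharged; it is supplied by Corollary 8.6 and Lemma 8.12 of \cite{vanderPut-Singer}, which give analytic realizations over $\CX(x)$ of connected algebraic groups and of cyclic extensions of tori, respectively. This is precisely where the restriction on $G$ enters, and one should also record (as you implicitly do for the component group) that the properties ``connected'' and ``cyclic extension of a torus'' are preserved when the algebraically closed base field is changed from $\Omega$ to $\bar{k}$ to $\CX$. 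With that citation in place your proof is complete and matches the paper's.
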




\begin{thebibliography}{}
%
%
\bibitem{birkhoff}
Birkhoff, G.D., Formal theory of irregular linear difference equations. \textit{Acta Math.} 54, no. 1, 205-246, (1930).
\bibitem{braverman-etingof-gaitsgory}
Braverman, A., Etingof, P. and Gaitsgory, D. Quantum integrable systems and differential Galois theory, \textit{Transform. Groups}, 2(1), 31-56 (1997).
\bibitem{chatzidakis-hardouin-singer}
Chatzidakis, Z.; Hardouin, C. and Singer, M.F. On the definitions of difference Galois groups.  \textit{Model theory with applications to algebra and analysis}. Vol. 1,  73-109, London Math. Soc. Lecture Note Ser., 349, Cambridge Univ. Press, Cambridge (2008).
\bibitem{cassidy-singer}
Cassidy, P.J. and Singer, M.F. Galois theory of parameterized differential equations and linear differential algebraic groups, \textit{IRMA Lect. Math. Tehor. Phys.}, 9, 113-157 (2006).
\bibitem{divizio}
Di Vizio, L. On the arithmetic theory of q-difference equations. The q-analogue of the Grothendieck-Katz's conjecture on p-curvatures. \textit{Invent. Math.} 150 (3), 517--578 (2002).
\bibitem{divizio-hardouin-wibmer}
  Di Vizio, L., Hardouin, C. and Wibmer, M. Difference Galois theory of linear differential equations, \textit{Adv. Math.}, 260,1-58 (2014).
\bibitem{dube}
Dube, T.W., The structure of polynomial ideals and Gr\"{o}bner bases, \textit{SIAM J. Comput.} 19,750-773 (1990).
\bibitem{feng1}
   Feng, R. Hrushovski's algorithm for computing the Galois group of a linear differential equation. \textit{Adv. in Appl. Math.} 65, 1-37 (2015).
\bibitem{feng2}
   Feng, R. On the computation of the Galois groups of linear difference equations, \textit{Math. Comp.}, 87, 941-965, (2018). DOI:10.1090/mcom/3232.
\bibitem{fried-jarden}
Fried, D.M. and Jarden, M., \textit{Field arithmetic}, Springer-Verlag, Berlin (2008).
\bibitem{hardouin-singer}
Hardouin, C. and Singer, M.F. Differential Galois theory of linear difference equations, \textit{Math. Ann.} 342, 333-377 (2008).
\bibitem{hendriks}
Hendriks, P.A., An algorithm determining the difference Galois group of second order linear difference equations, \textit{J. Symbol. Comput.}, 26, 445-461 (1998).
\bibitem{humphreys}
   Humphreys, J.E.  \textit{ Linear Algebraic Groups}, Springer-Verlag New York (1981).
\bibitem{hrushovski}
  Hrushovski, E. Computing the Galois group of a linear differential equation. \textit{ Banach Center Publ.} 58, 97-138 (2002).
\bibitem{katz}
    Katz, N.  A conjecture in the arithmetic theory of differential equations. \textit{Bull. Soc. Math. Fr. } 110, 203-239 (1982).
\bibitem{kolchin}
Kolchin, E.R. \textit{Differential Algebra and Algebraic Groups}, Academic Press, New York and London, (1973).
\bibitem{lang}
 Lang, S. \textit{Fundamentals of Diophantine geometry}. Springer-Verlag, New York (1983).
\bibitem{maier}
Maier, A. A difference version of Nori's theorem. \textit{Math. Ann.} 359, no.3-4, 759-784 (2014).
\bibitem{ovchinnikov-wibmer}
Ovchinnikov, A. and Wibmer, M. $\sigma$-Galois theory of linear differential equations, \textit{Int. Math. Res. Not. IMRN}, 12, 3962-4018 (2015).
\bibitem{petkovsek}
  Petkov\v{s}ek, M. Hypergeometric solutions of linear recurrences with polynomial coefficients. \textit{J. Symbol. Comput.}, 11, 1-22 (1998).
 \bibitem{pillay}
 Pillay, A. Differential algebra and generalizations of Grothendieck's conjecture on the arithmetic of linear differential equations. In \textit{Model Theory with Applications to Algebra and Analysis I,} LMS Lecture Notes 349, 25-39 (2008).
\bibitem{dserre}
Serre, D. \textit{Matrices: Theory and Applications}, Springer-Verlag New York, Inc. (2002).
\bibitem{serre}
 Serre, J.P. \textit{Lectures on the Mordell-Weil theorem}. Translated from the French and edited by Martin Brown from notes by Michel Waldschmidt. With a foreword by Brown and Serre. Third edition. Aspects of Mathematics. Friedr. Vieweg \& Sohn, Braunschweig (1997).
\bibitem{shafarevich}
Shafarevich, I.R., \textit{Basic Algebraic Geometry I}, Spring-Verlag Berlin Heidelberg, (1994).
\bibitem{singer}
Singer, M.F. Moduli of linear differential equations on the Riemann sphere with fixed Galois groups. \textit{Pacific J. Math}. 160, no. 2, 343-395 (1993).
\bibitem{vanderPut-Singer}
 van der Put, M. and Singer, M.F.,  \textit{Galois Theory of Difference Equations}, LNM1666,  Springer-Verlag, Berlin (1997).
\bibitem{volklein}
   V\"{o}lklein H., \textit{Groups as Galois groups: An introduction},  Cambridge Studies in Advanced Mathematics, 53, Cambridge University Press, Cambridge (1996).
\bibitem{zariski-samuel}
Zariski, O.; Samuel, P. \textit{Commutative algebra}. Vol. II. Reprint of the 1960 edition. Graduate Texts in Mathematics, Vol. 29, Springer-Verlag, New York-Heidelberg (1975).
\end{thebibliography}


\end{document}